\documentclass[11pt]{article}
\usepackage[sectionbib]{natbib}
\usepackage{array,epsfig,fancyheadings,rotating}
\usepackage[colorlinks,linkcolor=blue,anchorcolor=blue,citecolor=blue,CJKbookmarks=True]{hyperref}
\usepackage{sectsty, secdot}
\usepackage{amsbsy}
\usepackage{amsmath}
\usepackage{amssymb}
\usepackage{epsfig}
\usepackage{multicol}
\usepackage{multirow}
\usepackage{color}
\usepackage{bm}
\usepackage{threeparttable}
\usepackage{float}
\usepackage{array}
\usepackage{amsmath}
\usepackage{amssymb}
\usepackage{amsfonts}
\usepackage{multirow}
\usepackage{amsthm}
\usepackage{graphicx}
\usepackage{bm}
\usepackage{enumerate}
\usepackage{subfigure}
\usepackage{booktabs}
\usepackage{pdflscape}
\usepackage{url}
\usepackage{xcolor,graphicx,float}
\usepackage{rotating}
\usepackage{times}
\usepackage{indentfirst}
\usepackage{blindtext}
\usepackage{algorithm,algorithmicx,float}
\usepackage{lipsum}
\usepackage[noend]{algpseudocode}
\usepackage{setspace}
\usepackage{booktabs}

\usepackage[sectionbib]{natbib}
\usepackage{array,epsfig,fancyheadings,rotating}
\usepackage[]{hyperref}  
\usepackage{sectsty, secdot}
\usepackage{authblk}
\usepackage{amsmath}
\usepackage{amssymb}
\usepackage{amsfonts}
\usepackage{multirow}
\usepackage{amsthm}
\usepackage{graphicx}
\usepackage{bm}
\usepackage{enumerate}
\usepackage{subfigure}
\usepackage{booktabs}
\usepackage{pdflscape}
\usepackage{url}
\usepackage{xcolor,graphicx,float}
\usepackage{rotating}
\usepackage{times}
\usepackage{indentfirst}
\usepackage{blindtext}
\usepackage{algorithm,algorithmicx,float}
\usepackage{lipsum}
\usepackage[noend]{algpseudocode}
\usepackage{setspace}
\usepackage{booktabs}


\makeatletter
\renewcommand{\@thesubfigure}{\hskip\subfiglabelskip}
\makeatother

\renewcommand{\baselinestretch} {1.5}
\makeatletter 
\def\singlespace{\def\baselinestretch{1}\@normalsize}

\newlength\savewidth

\renewcommand{\theequation} {\arabic{section}.\arabic{equation}}
\@addtoreset{equation}{section}
\newtheorem{theorem}{Theorem}

\newtheorem{lemma}{Lemma}
\newcommand{\bg}{\mathbf{g}}
\newcommand{\bv}{\mathbf{v}}
\newcommand{\bu}{\mathbf{u}}
\newcommand{\bw}{\mathbf{w}}
\newcommand{\ba}{\mathbf{a}}
\newcommand{\be}{\mathbf{e}}
\newcommand{\bA}{\mathbf{A}}

\newcommand{\bV}{\mathbf{V}}
\newcommand{\bI}{\mathbf{I}}
\newcommand{\bX}{\mathbf{X}}
\newcommand{\bS}{\mathbf{S}}
\newcommand{\bs}{\mathbf{s}}
\newcommand{\bt}{\mathbf{t}}

\newcommand{\bZ}{\mathbf{Z}}

\newcommand{\bY}{\mathbf{Y}}

\newcommand{\bG}{\mathbf{G}}
\newcommand{\bH}{\mathbf{H}}
\newcommand{\bq}{\mathbf{q}}
\newcommand{\bP}{\mathbf{P}}

\newcommand{\bh}{\mathbf{h}}
\newcommand{\bz}{\mathbf{z}}
\newcommand{\bc}{\mathbf{c}}

\newcommand{\bbf}{\mathbf{f}}
\newcommand{\bbb}{\mathbf{b}}
\newcommand{\bxi}{\boldsymbol{\xi}}

\newcommand{\R}{\mathbb{R}}

\newcommand{\F}{\mathrm{F}}
\newcommand{\op}{\mathrm{op}}

\newcommand{\tr}{\operatorname{tr}}

\usepackage{stackengine}

\DeclareMathOperator*{\argmin}{argmin}

\makeatletter
\renewcommand{\@thesubfigure}{\hskip\subfiglabelskip}
\makeatother

\makeatletter
\def\singlespace{\def\baselinestretch{1}\@normalsize}

\date{\today}

\title{Optimal rates for aggregation of affine estimators}

\author{Jingfu Peng}
\affil{Yau Mathematical Sciences Center, Tsinghua University}

\begin{document}
\begin{sloppypar}

\maketitle

\begin{abstract}

Aggregation of estimation procedures has found important applications in econometrics, statistics, and machine learning. Classical statistical aggregation theory has mainly focused on a \emph{pure aggregation} setting, where the candidate estimators are either deterministic or constructed using a held-out sample independent of that used for aggregation. When the candidate estimators and the aggregation weights are estimated from the same dataset without sample splitting, Bellec [\emph{Ann. Statist.} \textbf{46} (2018), 30--59] established the optimal rate for model-selection aggregation of affine estimators. Beyond this regime, however, fundamental questions regarding the optimal aggregation rates and the construction of aggregation rules attaining these rates remain largely unresolved. 

In this paper, we consider the problem of aggregating a finite collection of affine estimators to learn an optimal convex combination of them. This framework encompasses a rich class of estimators widely used in statistics and machine learning, including least squares estimators, kernel ridge estimators, random feature regression estimators, and among many others. We establish the minimax rate for convex aggregation of affine estimators. In particular, we show that estimating the weights by minimizing a Mallows' $C_p$ criterion attains the optimal rate. We further study the linear aggregation regime with unrestricted weights and establish matching minimax lower and upper bounds over suitable classes of affine estimators.

\end{abstract}
\textbf{KEY WORDS: Affine estimator, convex aggregation, sharp oracle inequality, Mallows model averaging, minimax risk theory.}

\bigskip
\baselineskip=18pt

\section{Introduction}\label{sec:intro}

Aggregation of estimation procedures has played a fundamental role in the development of several areas of data science. In econometrics, forecast combination can be traced back to the seminal work of \cite{Bates01121969}. In machine learning, a series of influential methods developed by \cite{Breiman1996Bagging, Breiman1996Stacked, Breiman2001Forest} introduced effective ways to combine multiple predictive models and laid important foundations for modern ensemble learning. These aggregation approaches have achieved remarkable empirical success across a wide range of applications, from recommender systems, as famously demonstrated by the Netflix Prize \citep[see, e.g.,][]{Feuerverger2012}, to presidential election forecasting, weather prediction, and large-scale machine learning competitions. Therefore, understanding the fundamental statistical limits of aggregation and constructing optimal aggregation strategies that attain these limits is of both theoretical and practical interest.  

\subsection{Motivation}

In statistical learning theory, a rigorous theoretical framework for aggregation was developed through a series of seminal works by \cite{Nemirovski2000}, \cite{Tsybakov2003}, \cite{Catoni2004}, and \cite{yang2004aggregating}, which classified aggregation problems into three basic regimes: model-selection (MS) aggregation, convex aggregation, and linear aggregation. The objective of MS aggregation is to mimic the best individual estimator among a given collection of candidates by aggregation, whereas convex and linear aggregation aim to mimic the best convex and linear combinations of the candidate estimators, respectively. In the regression setting, the minimax rates for these three aggregation problems were established \citep[see, e.g.,][]{Tsybakov2003}, characterizing the optimal worst-case excess risk attainable by any aggregation procedure relative to the minimum risk under the corresponding weight constraints. Building on this minimax perspective, substantial advances have been made in developing aggregation procedures that are optimal both in expectation and with high probability, establishing sharp oracle inequalities, and extending aggregation theory to more general classes of weight constraints; see, e.g., \cite{Juditsky2000, Juditsky2008}, \cite{Yang2000, Yang2001}, \cite{Audibert2007}, \cite{Lounici2007}, \cite{Bunea2007}, \cite{Rigollet2007, Rigollet2011, Rigollet2012}, \cite{Dalalyan2007, DALALYAN20121423, Dalalyan2012Mirror}, \cite{Lecue2009erm, Lecue2013ew, Lecue2013convex,  Lecue2014Q-agg}, \cite{Dai2012Deviation}, \cite{Wang2014}, and \cite{Bellec2017, Bellec2019}. 

The aforementioned conventional statistical aggregation theory and its associated methodologies have provided important insights into high-dimensional learning, adaptive estimation, transfer learning, and many other areas of data science. While it accommodates general candidate procedures, a major limitation of this aggregation theory is that the candidate procedures to be aggregated are typically treated as deterministic, so that minimax optimality is assessed only at the aggregation stage. This framework is sometimes referred to as the \emph{pure aggregation} setting \citep[see, e.g.,][]{Rigollet2007, Rigollet2012KL}. In practical applications, implementing such aggregation procedures therefore often requires sample splitting: one subsample is used to construct the candidate estimators, while the other is reserved for aggregation. 

The pure aggregation framework does not cover an important and practically relevant setting in which the candidate estimators are themselves constructed from the same sample that is subsequently used for aggregation. A canonical example arises in model selection, where a collection of least squares estimators is fitted to the observed data and the same sample is subsequently used to select among them based on an information criterion \citep[see, e.g.,][]{Mallows1973, Akaike1974, Schwarz1978, Kneip1994Ordered, Foster1994, Barron1999MS, Yang1999sinica, baraud2000model, Baraud2002random, birge2001gaussian, Birge2007}. However, when the goal is to aggregate such data-dependent estimators rather than select a single one, the corresponding minimax theory and the construction of optimal aggregation strategies remain far less developed. To the best of our knowledge, existing work on this problem has focused almost exclusively on the MS aggregation regime, where the goal is to construct an aggregate that mimics the best single estimator among a given collection. This setting has been studied for least squares estimators \citep{Leung2006, Giraud2008, Alquier2011}, ordered linear smoothers \citep{Chernousova2013, Bellec2020}, and general affine estimators \citep{Dalalyan2012, Dai2014, Bellec2018}. In particular, \cite{Bellec2018} established the minimax optimality of Q-aggregation for affine estimators under mild conditions, showing that the optimal MS aggregation rate of affine estimators remains of the same order as in the deterministic candidate setting. 

For the aggregation of affine estimators beyond the MS aggregation regime, the existing literature remains relatively sparse, and the corresponding minimax rates have yet to be established. Under the convex aggregation regime, \cite{Peng2023shrinkage} and \cite{Peng2024Optimality} established nearly sharp oracle inequalities for aggregating least squares estimators from nested linear subspaces by minimizing a Mallows' $C_p$ criterion \citep{Mallows1973}. More recently, \cite{peng2025mallows} considered more general settings for convex and linear aggregations of least squares estimators, including estimators associated with general linear subspaces and all-subset collections. \cite{Bellec2018} also derived several sharp oracle inequalities for convex aggregation of general affine estimators. However, it remains unclear whether these upper bounds are sharp enough to attain the fundamental statistical limit due to the absence of the minimax lower bound. Moreover, how to construct optimal aggregation procedures for linear aggregation is also open. 

Motivated by the above discussion, this paper aims to establish the minimax rates for aggregating affine estimators beyond the MS aggregation regime. The importance of this problem is threefold. 
\begin{enumerate}
  \item Convex and linear aggregations constitute two fundamental aggregation regimes, and characterizing their minimax rates fills an important gap in the theory of aggregation for estimators from the same data. 
  \item Convex and linear aggregations target more ambitious benchmarks than MS aggregation and can lead to substantial performance improvements in some settings. As discussed in the Introduction of \cite{Peng2022improvability}, when substantial biases among the candidate estimators can be offset through convex combinations, relaxing the weight constraint from MS to convex aggregation can yield substantial improvements. Even when such a bias-cancellation advantage is absent, as in nested model settings, \cite{Peng2022improvability} showed that the minimum risk over convex combinations can still be substantially smaller than the minimum risk over individual estimators under certain conditions. 
  \item Focusing on affine estimators is of broad interest since this class encompasses many commonly used procedures in statistics and machine learning, including linear sieve estimators \citep{Newey1997}, Pinsker-type estimators \citep{Pinsker1980}, local polynomial estimators \citep{FanGijbels1996}, kernel ridge regression \citep{CaponnettoDeVito2007}, and random feature regression \citep{RahimiRecht2007}. The broad relevance of affine estimators is also supported by classical minimax theory. \cite{Donoho1990} showed that, over solid orthosymmetric quadratically convex parameter spaces, the minimax risk among linear estimators is within a factor of $1.25$ of the minimax risk among all (non-linear) estimators. 
\end{enumerate}


\subsection{Contributions}

In this paper, we study the convex and linear aggregation of general affine estimators constructed from the same data used to determine the aggregation weights. Our main contributions are summarized as follows. 

\begin{enumerate}
  \item In the convex-aggregation regime, we establish matching minimax lower and upper bounds in both probability and expectation. The lower bound is proved for zero-intercept affine estimators, whereas the upper bound requires only mild uniform bounds on the operator norms of the linear components and on the norms of the intercept vectors. The resulting minimax rate coincides with that for aggregating deterministic vectors, showing that the data dependence of affine estimators does not increase the fundamental difficulty of convex aggregation. 
  \item We prove that minimizing Mallows' $C_p$ criterion over the simplex yields a minimax optimal convex aggregate. Our proof exploits the strong convexity of the criterion in the fitted value of aggregate, together with a localized empirical process indexed by a seminorm. For the large-$M$ regime, we combine uniform concentration over line segments with two different applications of the Maurey sampling argument: one to approximate the oracle risk and another to sparsify the $C_p$ minimizer. These techniques substantially sharpen existing analyses of Mallows-type aggregation. 
  \item In the linear-aggregation regime, we establish matching minimax lower and upper bounds over a class of affine-estimator collections characterized by a lower bound on their minimum stable rank. We propose a truncated Mallows $C_p$ procedure and prove that it attains the optimal aggregation rate over this class.
\end{enumerate}

\subsection{Other related work}

The problem studied in this paper is closely related to the extensive econometrics literature on Mallows model averaging, which combines a collection of least squares estimators by minimizing Mallows' $C_p$ criterion. The seminal work of \cite{Hansen2007least} considered least squares estimators associated with nested linear models, while \cite{WAN2010277} and \cite{Zhang_2021} extended this framework to more general collections of least squares estimators. Our work differs from this literature in several aspects. First, we consider general affine estimators, which include least squares estimators as a special case. Second, we develop a non-asymptotic minimax theory that provides both upper bounds and matching lower bounds. Third, our results provide a statistical aggregation justification of Mallows model averaging and establish its optimality in terms of minimax excess loss and risk.

In the convex aggregation regime, the insightful works of \cite{WAN2010277} and \cite{Zhang_2021} established the asymptotic optimality of Mallows model averaging, showing that its loss asymptotically matches that of the best convex combination under suitable restrictions on the number and structure of the candidate models. Such asymptotic results have already provided important insights into the properties of Mallows model averaging. However, they do not identify the minimax excess-loss rate or determine whether the imposed restrictions are intrinsic to the problem. Our results strengthen this literature by characterizing the minimax rates of convex aggregation and showing that minimizing Mallows' $C_p$ criterion attains these rates under mild conditions. 

\subsection{Organization}

The remainder of this paper is organized as follows. Section~\ref{sec:setup} introduces the regression framework for aggregating affine estimators, formulates the notion of minimax optimality, and discusses the main gaps in the existing literature. Section~\ref{sec:convex} presents the minimax rates and optimal procedures for convex aggregation. The corresponding results for linear aggregation are developed in Section~\ref{sec:linear_agg}. Section~\ref{sec:disc} discusses the implications and limitations of our results. All proofs are deferred to the Appendix.

\section{Problem setup}\label{sec:setup}

\subsection{Setup and notation}\label{subsec:setup}

We observe a response vector $\bY \triangleq (Y_1, \ldots, Y_n)^{\top} \in \mathbb{R}^n$ generated from the fixed-design regression model 
\begin{equation}\label{eq:model}
  Y_i = f_i + \xi_i, \quad i = 1, \ldots, n, 
\end{equation}
where $\bxi \triangleq (\xi_1, \ldots, \xi_n)^{\top}$ consists of $n$ i.i.d. mean-zero Gaussian or sub-Gaussian random errors with $\mathbb E[\xi_i^2]=\sigma^2$, $f_i \triangleq f(x_i)$, $f: \mathcal{X} \to \mathbb{R}$ is an unknown regression function, and $x_1, \ldots, x_n \in \mathcal{X}$ are deterministic design points. Our goal is to estimate the unknown regression mean vector $\bbf \triangleq (f_1, \ldots, f_n)^{\top}$, which is assumed to belong to a subspace $\mathcal{F} \subseteq \mathbb{R}^n$. For an estimator $\hat{\bbf} \triangleq (\hat f_1,\ldots,\hat f_n)^\top$, we measure its performance by the empirical quadratic loss $L_n(\hat{\bbf}, \bbf) \triangleq \| \hat{\bbf} - \bbf \|_n^2 \triangleq (1/n) \sum_{i=1}^{n}(\hat{f}_i - f_i)^2$ and the corresponding risk $R_n(\hat{\bbf}, \bbf) \triangleq \mathbb{E}_{\bbf}\| \hat{\bbf} - \bbf \|_n^2$. This fixed-design regression framework is standard in the aggregation literature \citep[see, e.g.,][]{Leung2006, Rigollet2011, Dalalyan2012, Bellec2018, Peng2024Optimality}. 

Let $M=M_n\geq 2$ be an integer that may depend on the sample size $n$. To estimate the unknown mean vector $\bbf$, we consider aggregating $M$ affine estimators of the form  
\begin{equation}\label{eq:affine_est}
  \hat{\bbf}_m=\bA_m \bY+\bbb_m, \quad m=1, \ldots, M, 
\end{equation}
where $\bA_1,\ldots,\bA_M\in\mathbb{R}^{n\times n}$ are deterministic matrices, and $\bbb_1,\ldots,\bbb_M\in\mathbb{R}^n$ are deterministic intercept vectors. Let $\mathcal{M} \triangleq \{ (\bA_1,\bbb_1),\ldots,(\bA_M,\bbb_M) \}$ denote a collection of $M$ candidate affine estimators. Let $\mathbb{M} \subseteq (\mathbb{R}^{n\times n}\times\mathbb{R}^n)^M$ denote a family of such candidate collections, which serves as the parameter space for $\mathcal{M}$. Each element of the family $\mathbb{M}$ corresponds to a particular collection $\mathcal{M}$ of $M$ affine estimators.

Let $\mathcal{W} \subseteq \mathbb{R}^M$ be the weight constraint. For any $\bw \triangleq (w_1, \ldots, w_M)^{\top} \in \mathcal{W}$, the aggregated estimator of the affine estimators in $\mathcal{M}$ is defined as 
\begin{equation}\label{eq:agg_est}
  \hat{\bbf}_{\bw|\mathcal{M}} = \sum_{m = 1}^{M} w_m \hat{\bbf}_m = \bA_{\bw} \bY+\bbb_{\bw}, 
\end{equation}
where $\bA_{\bw} \triangleq \sum_{m=1}^{M}w_m\bA_m$, $\bbb_{\bw} \triangleq \sum_{m=1}^{M}w_m\bbb_m$, and the subscript $\bw|\mathcal{M}$ emphasizes that the aggregate $\hat{\bbf}_{\bw|\mathcal{M}}$ depends on both the weight vector $\bw$ and the candidate collection $\mathcal{M}$. And the performance of $\hat{\bbf}_{\bw|\mathcal{M}}$ is evaluated by $L_n(\hat{\bbf}_{\bw|\mathcal{M}}, \bbf) \triangleq \| \hat{\bbf}_{\bw|\mathcal{M}} - \bbf \|_n^2$ and $R_n(\hat{\bbf}_{\bw|\mathcal{M}}, \bbf) \triangleq\mathbb{E}_{\bbf}\| \hat{\bbf}_{\bw|\mathcal{M}} - \bbf \|_n^2$, respectively, where the expectation $\mathbb{E}_{\bbf}$ is taken with respect to the data in (\ref{eq:model}). 

The three most important weight constraints are 
\begin{equation}\label{eq:weight_constraint}
  \mathcal{W}_{\mathrm{MS}} \triangleq \left\{\be_1,\ldots,\be_M\right\}, \quad 
  \mathcal{W}_{\mathrm{C}} \triangleq \left\{\bw \in [0,1]^M:\sum_{m=1}^M w_m=1\right\}, \quad
  \mathcal{W}_{\mathrm{L}} \triangleq \mathbb{R}^M, 
\end{equation}
where $\be_m$ denotes the $m$-th canonical basis vector in $\mathbb{R}^M$. These three weight sets correspond to the MS, convex, and linear aggregation regimes, respectively. For any $\mathcal{W} \in \{\mathcal{W}_{\mathrm{MS}}, \mathcal{W}_{\mathrm{C}}, \mathcal{W}_{\mathrm{L}} \}$, the optimal performance by aggregation over a given candidate collection $\mathcal{M}$ is characterized by the \emph{oracle loss} $\inf_{\bw \in \mathcal{W}} L_n(\hat{\bbf}_{\bw|\mathcal{M}}, \bbf)$ and the \emph{oracle risk} $\inf_{\bw \in \mathcal{W}} R_n(\hat{\bbf}_{\bw|\mathcal{M}}, \bbf)$. It is immediate from $\mathcal{W}_{\mathrm{MS}} \subseteq \mathcal{W}_{\mathrm{C}} \subseteq \mathcal{W}_{\mathrm{L}}$ that 
\begin{equation*}
  \inf_{\bw \in \mathcal{W}_\mathrm{MS}} R_n(\hat{\bbf}_{\bw|\mathcal{M}}, \bbf) \geq \inf_{\bw \in \mathcal{W}_\mathrm{C}} R_n(\hat{\bbf}_{\bw|\mathcal{M}}, \bbf) \geq \inf_{\bw \in \mathcal{W}_\mathrm{L}} R_n(\hat{\bbf}_{\bw|\mathcal{M}}, \bbf), 
\end{equation*}
where the inequalities can be strict in some important settings \citep{Peng2022improvability, Peng2023shrinkage, Peng2024Optimality}. The goal of aggregation is to construct an aggregate whose loss or risk is as close as possible to the corresponding oracle benchmark, uniformly over all $\bbf \in \mathcal{F}$ and $\mathcal{M} \in \mathbb{M}$.

\subsection{Definitions of minimax optimality}

To characterize the fundamental statistical limits of optimal aggregation over $\bbf \in \mathcal{F}$ and $\mathcal{M} \in \mathbb{M}$, we adopt the minimax framework introduced in \cite{Nemirovski2000} and \cite{Tsybakov2003}. A positive sequence $\psi_n(\mathcal{F}, \mathbb{M}, \mathcal{W})$ is called the minimax lower bound for aggregation under the weight constraint $\mathcal{W}$ if, for all sufficiently large $n$ and $M$, there exists a candidate collection $\mathcal{M} \in \mathbb{M}$ such that 
\begin{equation}\label{eq:loss_lower bound}
  \inf_{\tilde{\bbf}} \sup_{\bbf \in \mathcal{F}} \mathbb{P}_{\bbf} \left[  L_n(\tilde{\bbf}, \bbf) - \inf_{\bw \in \mathcal{W}} L_n(\hat{\bbf}_{\bw|\mathcal{M}}, \bbf) \geq c \psi_n(\mathcal{F}, \mathbb{M}, \mathcal{W})\right] \geq p, 
\end{equation}
where $c>0$ and $0<p<1$ are constants independent of $n$ and $M$, and the infimum is taken over all estimators $\tilde{\bbf}$ based on the observed data $\bY$. A related minimax lower bound in risk is defined as 
\begin{equation}\label{eq:risk_lower bound}
  \inf_{\tilde{\bbf}} \sup_{\bbf \in \mathcal{F}} \left[  R_n(\tilde{\bbf}, \bbf) - \inf_{\bw \in \mathcal{W}} R_n(\hat{\bbf}_{\bw|\mathcal{M}}, \bbf) \right] \geq c \bar{\psi}_n(\mathcal{F}, \mathbb{M}, \mathcal{W}),  
\end{equation}
for some constant $c>0$. 

Moreover, if the lower bounds in (\ref{eq:loss_lower bound})--(\ref{eq:risk_lower bound}) are attainable by an aggregate $\hat{\bbf}$ uniformly over all $\bbf\in\mathcal{F}$ and $\mathcal{M}\in\mathbb{M}$, then $\psi_n(\mathcal{F},\mathbb{M},\mathcal{W})$ and $\bar{\psi}_n(\mathcal{F},\mathbb{M},\mathcal{W})$ determine the minimax optimal aggregation rates in probability and in expectation, respectively. And the corresponding aggregation procedure is minimax optimal under the weight constraint $\mathcal{W}$. 

\subsubsection{MS aggregation regime}

In the MS aggregation regime where $\mathcal{W} = \mathcal{W}_{\mathrm{MS}}$, the minimax rate of aggregating general affine estimators is now well understood. Define 
\begin{equation}\label{eq:MS_rate}
  \psi_{n,M}^{\mathrm MS} \triangleq \log(M)/n. 
\end{equation}
Let $\mathbb{M}_{\mathrm{det},B} \triangleq \{ \{ (\boldsymbol{0},\bbb_1),\ldots,(\boldsymbol{0},\bbb_M) \}: \max_{1\leq j\leq M}\|\bbb_j\|_n\leq B \}$ denote the family of affine-estimator collections for which all linear components vanish, so that the candidate estimators reduce to deterministic vectors. In this setting, it is proved that $\psi_n(\mathcal{F}, \mathbb{M}_{\mathrm{det},B}, \mathcal{W}_{\mathrm{MS}}) \asymp \psi_{n,M}^{\mathrm MS}$ \citep{Tsybakov2003, Rigollet2012}, and a variety of aggregation procedures have been proved to attain this rate over $\mathcal{M} \in \mathbb{M}_{\mathrm{det},B}$. Let $\mathbb{M}_{\mathrm{proj,r1}}^{(0)} \triangleq \{ \{ (\bA_1,\boldsymbol{0}),\ldots,(\bA_M,\boldsymbol{0}) \}: \bA_m \text{ is a rank-one orthogonal projector} \}$ denote the family for which all intercept vectors are $\boldsymbol{0}$ and the linear components are rank-one orthogonal projection matrices. Proposition~2.1 of \cite{Bellec2018} shows that if $\mathcal{F}=\mathbb{R}^n$ and the family $\mathbb{M}$ contains either $\mathbb{M}_{\mathrm{det},B}$ or $\mathbb{M}_{\mathrm{proj,r1}}^{(0)}$, then MS aggregation of affine estimator has the minimax lower bound 
\begin{equation}\label{eq:affine_MS_rate}
  \psi_n(\mathcal{F}, \mathbb{M}, \mathcal{W}_{\mathrm{MS}}) \asymp \psi_{n,M}^{\mathrm MS}.
\end{equation}

Moreover, the rate in (\ref{eq:affine_MS_rate}) is indeed minimax optimal and can be attained over a fairly general family $\mathbb{M}$ of affine estimator collections. Specifically, Theorem~2.1 of \cite{Bellec2018} establishes that a Q-aggregation estimator $\hat{\bbf}_{\hat{\bw}_Q|\mathcal{M}}$ satisfies, with probability at least $1-\exp(-x)$, 
\begin{equation*}
  L_n(\hat{\bbf}_{\hat{\bw}_Q|\mathcal{M}}, \bbf) - \inf_{\bw \in \mathcal{W}_{\mathrm{MS}}} L_n(\hat{\bbf}_{\bw|\mathcal{M}}, \bbf) \leq \frac{C(x + \log M)}{n}, 
\end{equation*}
uniformly over all $\mathcal{M} \in \mathbb{M}_{\mathrm{op}}$, where $\mathbb{M}_{\mathrm{op}} \triangleq \{ \mathcal{M}: \max_{j \neq k}\| \bA_j - \bA_k \|_{\mathrm{op}} \leq \phi \}$, and $\phi>1$ is some constant. MS aggregation over more restrictive families of affine estimators has also been studied by \cite{Leung2006} and \cite{Dalalyan2012} using the exponential weighting strategies. These insightful results show that, under reasonably mild conditions on the linear components $\bA_1,\ldots,\bA_M$, MS aggregation of affine estimators has the same minimax rate as aggregation of deterministic candidate estimators. 

\subsubsection{Convex aggregation regime}\label{sec:convx_review}

In the convex aggregation regime where $\mathcal{W} = \mathcal{W}_{\mathrm{C}}$, the minimax rate of aggregation of general affine estimators is unknown. Specifically, define 
\begin{equation}\label{eq:convex_rate}
  \psi_{n,M}^{\mathrm C} \triangleq \begin{cases}
    M/n, & \mbox{if } M \leq \sqrt{n} \\
    \sqrt{\frac{\log( 1 + M/\sqrt{n} )}{n}}, & \mbox{if } M > \sqrt{n}.
  \end{cases}
\end{equation}
It is only known that the minimax-optimal rate for aggregating deterministic vectors exhibits an elbow phenomenon \citep{Tsybakov2003, Rigollet2012} as shown in (\ref{eq:convex_rate}): $
  \psi_n(\mathcal{F}, \mathbb{M}_{\mathrm{det},B}, \mathcal{W}_{\mathrm{C}}) \asymp \psi_{n,M}^{\mathrm C}. 
$
However, for convex aggregation of affine estimators with nonzero linear components, the corresponding minimax rate remains unknown. 

Existing work has focused solely on deriving upper bounds. Under the restriction that $\bbb_m = \boldsymbol 0$ and the matrices $\bA_m$ are orthogonal projectors associated with nested linear subspaces, \cite{Peng2023shrinkage} and \cite{Peng2024Optimality} showed that estimating the aggregation weights by minimizing a Mallows' $C_p$ criterion (see definition in (\ref{eq:Cp})) yields  
\begin{equation}\label{eq:MMA_1}
  \mathbb{E}L_n(\hat{\bbf}_{\hat{\bw}|\mathcal{M}}, \bbf)  \leq [1 + o(1)] \inf_{\bw \in \mathcal{W}_{\mathrm{C}}} R_n(\hat{\bbf}_{\bw|\mathcal{M}}, \bbf) + \frac{(\log n)^t}{n}, 
\end{equation}
for some $t > 0$. More recently, \cite{peng2025mallows} extended this result to general orthogonal projection matrices without the nested restriction and established the (nearly sharp) oracle inequality under the forth-moment condition on the random errors:  
\begin{equation}\label{eq:MMA_2}
  \mathbb{E}L_n(\hat{\bbf}_{\hat{\bw}|\mathcal{M}}, \bbf) \leq [1 + o(1)] \inf_{\bw \in \mathcal{W}_{\mathrm{C}}} R_n(\hat{\bbf}_{\bw|\mathcal{M}}, \bbf) + C\min\left\{ M^2/n, \sqrt{M/n} \right\}.  
\end{equation}
Similar Mallows aggregation and Q-aggregation estimators were also studied in Proposition~7.2 of \cite{Bellec2018} for affine estimators satisfying $\max_{1 \leq m \leq M}\| \bA_m \|_{\mathrm{op}} \leq 1$ and $\max_{1 \leq m \leq M}\| \bbb_m \|_{n} \leq B$. However, the resulting oracle inequalities have remainder terms that converge no faster than $n^{-1/2}$, regardless of the value of $M$.  
 
The discrepancy between the existing upper bounds for convex aggregation of affine estimators and the minimax rate in (\ref{eq:convex_rate}) known for deterministic vectors raises the following fundamental questions:  
\begin{description}
  \item[Q1.] Does incorporating general affine estimators fundamentally change the minimax rate of convex aggregation? In other words, is the minimax lower bound strictly larger when the candidate estimators have nontrivial linear components, i.e., $\bA_m \neq \boldsymbol{0}$? 
      
  \item[Q2.] If not, can one construct a convex aggregation procedure that attains the minimax rate in (\ref{eq:convex_rate}) under reasonable assumptions on the family of affine-estimator collections? 
       
  \item[Q3.] Is the Mallows aggregation estimator minimax optimal for convex aggregation? Are the suboptimal upper bounds in (\ref{eq:MMA_1})--(\ref{eq:MMA_2}) merely technical artifacts of the existing analysis, or do they reflect an inherent suboptimality of Mallows aggregation for affine estimators? 
\end{description}

Answers to Q1 and Q2 would fill an important gap in our understanding of the fundamental statistical limits of convex aggregation for affine estimators and develop new methodologies for pursuing convex aggregation benchmarks. Q3 focuses on a specific aggregation procedure and is also of substantial interest, given the widespread use of Mallows aggregation methods in the econometrics and forecasting literature \citep[see, e.g.,][]{Hansen2007least, WAN2010277, Zhang_2021}. 

\subsubsection{Linear aggregation regime}\label{subsec:linear_review}

In the linear aggregation regime where $\mathcal{W}=\mathcal{W}_{\mathrm{L}}$, define 
\begin{equation}\label{eq:linear_rate}
  \psi_{n,M}^{\mathrm L} \triangleq M/n. 
\end{equation}
Then, it is only known that for linear aggregation of deterministic vectors, the rate is given by $
  \psi_n(\mathcal{F}, \mathbb{M}_{\mathrm{det},B}, \mathcal{W}_{\mathrm{L}}) \asymp \psi_{n,M}^{\mathrm L} 
$, and this rate can be attained by the least squares estimator over a linear space of dimension $M$ \citep{Rigollet2011,Rigollet2012KL, Tsybakov2003}. In contrast, for linear aggregation of affine estimators, both the minimax lower bound and matching upper bounds remain unexplored. This open problem was also noted in Section~7.2 of \cite{Bellec2018}. 

\begin{description}
  \item[Q4.] What is the minimax rate for linear aggregation of affine estimators, and can this rate be attained under certain conditions on the matrices $\bA_1,\ldots,\bA_M$?  
\end{description}

Answers to Q1--Q4 extend the three classical regimes of statistical aggregation theory developed in \cite{Nemirovski2000} and \cite{Tsybakov2003} to the setting in which the candidate estimators and the aggregation procedure are constructed using the same dataset, with the present paper focusing on affine estimators. 

\section{Convex aggregation}\label{sec:convex}

\subsection{Lower bounds}\label{subsec:lower}

Although the minimax lower bound for deterministic-vector aggregation, $\psi_n(\mathcal{F},\mathbb{M}_{\mathrm{det},B},\mathcal{W}_{\mathrm{C}})$, automatically provides a lower bound for convex aggregation over any larger family $\mathbb{M}\supseteq\mathbb{M}_{\mathrm{det},B}$, it does not reveal whether the data-dependent linear components introduce additional statistical difficulty. We therefore also seek a minimax lower bound for the zero-intercept case, where the candidate estimators depend on $\bY$ only through their linear components, in order to capture the fundamental limits arising specifically from this data dependence. 

To formulate the result, we consider the parameter space $\mathcal F_n(F)
  \triangleq\{\bbf\in\R^n:\|\bbf\|_n\le F\}$, where $F>0$ is a fixed constant. We also define the family of affine-estimator collections with zero intercept vectors by 
\begin{equation}\label{eq:operator-class} 
  \mathbb M_{M,A}^{(0)} \triangleq \left\{ \{ (\bA_1,\boldsymbol{0}),\ldots,(\bA_M,\boldsymbol{0}) \}: \max_{1\le j\le M}\|\bA_j\|_{\op}\le A \right\}.
\end{equation}
where $A>1$ is a constant. The following theorem establishes the minimax lower bound for convex aggregation over $\bbf\in\mathcal{F}_n(F)$ and $\mathcal{M}\in\mathbb{M}_{M,A}^{(0)}$. Since this subsection concerns the negative side of the problem, we assume that $\bxi\sim N(\boldsymbol{0},\sigma^2\bI_n)$. Recall the definition of the rate $\psi_{n,M}^{\mathrm C}$ in (\ref{eq:convex_rate}). 

\begin{theorem}[Lower bound]\label{theo:lower-zero-intercept}
There exist positive constants
\(c_0,c_1,c_2\) and positive integers \(n_0,M_0\), depending only on
\(F\) and $A$, such that the following statements hold whenever 
$
  n\ge n_0$, $M\ge M_0$, and
  $ \log M\le c_0n$.
There exists a deterministic candidate collection
\(\mathcal M\in\mathbb M_{M,A}^{(0)}\) such that:
\begin{equation}\label{eq:probability-lower}
  \inf_{\tilde{\bbf}}
  \sup_{\bbf\in\mathcal F_n(F)}
  \mathbb P_{\bbf}\!\left[
    L_n(\tilde{\bbf},\bbf)
    -\inf_{\bw \in \mathcal{W}_{\mathrm{C}}} L_n(\hat{\bbf}_{\bw|\mathcal{M}}, \bbf)
    \ge c_1\sigma^2\psi_{n,M}^{\mathrm C}
  \right]
  \ge c_2.
\end{equation}
For the same candidate collection, 
\begin{equation}\label{eq:risk-lower}
  \inf_{\tilde{\bbf}}
  \sup_{\bbf\in\mathcal F_n(F)}
  \left\{
    R_n(\tilde{\bbf},\bbf)
    -\inf_{\bw \in \mathcal{W}_{\mathrm{C}}} R_n(\hat{\bbf}_{\bw|\mathcal{M}}, \bbf)
  \right\}
  \ge c_1\sigma^2\psi_{n,M}^{\mathrm C},
\end{equation}
after possibly decreasing the constant \(c_1\).
\end{theorem}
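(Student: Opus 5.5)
The plan is to reduce the zero-intercept problem to the known lower bound for convex aggregation of deterministic vectors \citep{Tsybakov2003, Rigollet2012}. The reduction uses candidates $\bA_j$ whose fitted values $\bA_j\bY$ are, up to negligible noise, deterministic vectors in a subspace where the signal lives.

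\textbf{Construction.} Split the coordinates into two blocks. Fix a unit vector $\bu\in\R^n$, or a low-dimensional direction carrying a "reference" component. Choose $\bA_j=\bv_j\bu^\top\cdot c$, a rank-one operator mapping $\bY$ to $(\bu^\top\bY)\,c\,\bv_j$. Restrict $\bbf$ to the form $\bbf=\sqrt{n}\,a\,\bu+\bg$ with $\bg\perp\bu$ and $a$ a fixed constant. Then $\bu^\top\bY=\sqrt n a+\bu^\top\bxi$, and the candidate equals $c(\sqrt n a+\zeta)\bv_j$ with $\zeta\sim N(0,\sigma^2)$ a single scalar. Normalize so that $c\sqrt n a\,\bv_j=\bh_j$, with $\bh_j$ the hard deterministic dictionary ($\|\bh_j\|_n\lesssim 1$, supported on $\bu^\perp$). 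The multiplicative factor is then $1+\zeta/(\sqrt n a)=1+O_P(n^{-1/2})$. Keeping $\|\bA_j\|_{\op}=c\|\bv_j\|\le A$ requires $c\|\bv_j\|=\|\bh_j\|/(\sqrt n a\,\|\bu\|/\sqrt n)$-type bookkeeping. Choosing $a$ of order $F$ and $\|\bh_j\|_n$ of order one keeps everything within $A$ and $F$.

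Over $\bw\in\mathcal W_{\mathrm C}$, the aggregate is $(1+\eta)\sum_j w_j\bh_j$ with common scalar $\eta=\zeta/(\sqrt na)$. Its loss is $\|\sqrt n a\bu\|_n^2+\|(1+\eta)\bh_{\bw}-\bg\|_n^2$. The first term equals $a^2$ and is identical for every $\bw$. Any estimator $\tilde\bbf$ pays at least its error on $\bu^\perp$, and its component along $\bu$ can only help it. I would therefore enlarge the target: compare to an estimator of $\bg$ from the projected data $\bY_\perp=\bg+\bxi_\perp$, which is independent of $\zeta$. The excess becomes $\|\tilde\bg-\bg\|_n^2-\inf_{\bw}\|(1+\eta)\bh_{\bw}-\bg\|_n^2$ minus the $\bu$-direction error of $\tilde\bbf$. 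That last error is nonnegative and favourable, so it can be dropped after writing the loss as a sum of orthogonal pieces.

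\textbf{Deterministic bound.} Apply the classical hypothesis construction on $\bu^\perp$ (dimension $n-1$). For $M\le\sqrt n$, take $\bh_j$ proportional to orthonormal directions and use Varshamov--Gilbert hypotheses $\bg_\omega=\gamma\sum_j\omega_j\bh_j/M$ with $\gamma^2\asymp M\sigma^2/n$. For $M>\sqrt n$, use $s$-sparse averages with $s\asymp\sqrt{n/\log(1+M/\sqrt n)}$. Fano's lemma then gives, with constant probability, $\|\tilde\bg-\bg_\omega\|_n^2\ge c\sigma^2\psi^{\mathrm C}_{n,M}$. Each $\bg_\omega$ lies in the convex hull, so the deterministic oracle loss is $0$. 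The remaining step is to show that the random oracle loss $\inf_{\bw}\|(1+\eta)\bh_{\bw}-\bg_\omega\|_n^2\le\eta^2\|\bg_\omega\|_n^2\lesssim \gamma^2/n$ is $o(\psi^{\mathrm C}_{n,M})$. This holds since $\gamma^2\le C\psi^{\mathrm C}_{n,M}$ and $\eta^2=O_P(1/n)$, where $\psi$ is always at least $1/n$. The $\log M\le c_0n$ assumption is what keeps the Fano construction valid and the hypotheses inside $\mathcal F_n(F)$. The risk statement (\ref{eq:risk-lower}) follows the same way: lower-bound $\sup R_n(\tilde\bbf,\bbf)$ by Markov from the probability bound, and bound the oracle risk $\inf_{\bw}\mathbb E\|(1+\eta)\bh_{\bw}-\bg\|^2_n+a^2$-type terms directly. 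The $\bu$-direction risks cancel up to $O(\sigma^2/n)$, which is absorbed by shrinking $c_1$.

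\textbf{Main obstacle.} The delicate step is the orthogonal decomposition that justifies ignoring the $\bu$-component. The estimator $\tilde\bbf$ may use $\zeta$ (through $\bu^\top\bY$), whereas the hypotheses depend only on $\bY_\perp$. Independence of $\zeta$ and $\bxi_\perp$ under Gaussian noise settles this: conditioning on $\zeta$, the Fano bound holds for every fixed value of $\zeta$. A second point needs care: the random scalar $\eta$ enters the oracle, not the hypotheses. I must verify that the oracle excess bound $\eta^2\|\bg\|_n^2$ holds on an event of probability close to one, so that intersecting with the Fano event still leaves probability at least $c_2$.
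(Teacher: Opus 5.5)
\textbf{The construction has a genuine gap: the oracle is charged a constant that the estimator avoids.} With $\bA_j=c\,\bv_j\bu^\top$ and $\bv_j\propto\bh_j\in\bu^\perp$, every fitted value $\bA_j\bY=c(\bu^\top\bY)\bv_j$ lies in $\bu^\perp$. Hence every convex aggregate has zero component along $\bu$. Since $\bbf=\sqrt n\,a\,\bu+\bg$, this gives, deterministically, $\inf_{\bw\in\mathcal W_{\mathrm C}}L_n(\hat\bbf_{\bw|\mathcal M},\bbf)\ge a^2\asymp F^2$.

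The estimator has no such handicap. The value $\bu^\top\bbf=\sqrt n a$ is common to all your hypotheses, and in any case is estimable from $\bu^\top\bY$ at rate $\sigma^2/n$. So $\tilde\bbf=\sqrt n a\,\bu$ has loss $\|\bg\|_n^2\lesssim\sigma^2\psi^{\mathrm C}_{n,M}$ on your Fano hypotheses, and its excess loss is of order $-a^2<0$ on the whole family. Your remark that the $\bu$-component ``can only help'' the estimator has the sign backwards. The $\bu$-direction enters the excess as (estimator's $\bu$-error)$\,-\,a^2$; that is the estimator's advantage, not a nonnegative term you can drop. For the same reason, the claim in the risk part that the $\bu$-direction risks ``cancel up to $O(\sigma^2/n)$'' is false for this collection. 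No lower bound of order $\psi^{\mathrm C}_{n,M}$ can come out of it.

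\textbf{The fix: the candidates must also reproduce the pilot component of the mean.} Take $\bA_j=\bigl(\bu+\bh_j/(\sqrt n a)\bigr)\bu^\top$. This is the paper's choice $\bA_j=(\be_0+\bg_j/\tau)\be_0^\top$, where its $\bg_j$ play the role of your $\bh_j$ and $\tau=\sqrt n a$. For any convex $\bw$ with $\bh_{\bw}=\bg$ you then get $\bA_{\bw}\bY=\frac{\bu^\top\bY}{\sqrt n a}\,\bbf=(1+\eta)\bbf$. The realized oracle loss is therefore $\eta^2\|\bbf\|_n^2\lesssim\zeta^2/n$.

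The price is $\|\bA_j\|_{\op}=\sqrt{1+\|\bh_j\|^2/(na^2)}>1$. This is exactly where the hypothesis $A>1$ enters; the paper scales the dictionary by $\lambda_A=\frac12\min\{1,\sqrt{A^2-1}\}$. Your version never used $A>1$, which is a hint that something was missing.

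With this change, your conditioning on $\zeta$ and the independence of $\zeta$ and $\bxi_\perp$ work as you describe, and the argument becomes the paper's proof. One adjustment: the oracle loss is now of order $\sigma^2/n$, not automatically $o(\psi^{\mathrm C}_{n,M})$. So either intersect the Fano event with $\{|\zeta|\le q\sigma\}$ for a small fixed $q$, as the paper does, or use $n\psi^{\mathrm C}_{n,M}\ge\min\{M_0,\sqrt{n_0\log 2}\}$ being large.

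Separately, recheck your scaling in the regime $M\le\sqrt n$. With $\|\bh_j\|_n\asymp1$ and $\gamma^2\asymp M\sigma^2/n$, the hypotheses $\gamma\sum_j\omega_j\bh_j/M$ are separated only at order $\sigma^2/n$. You need $\gamma^2\asymp\sigma^2M^2/n$.
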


Combining Theorem~\ref{theo:lower-zero-intercept} with the discussion in Section~\ref{sec:convx_review}, we see that, for any family $\mathbb{M}$ of affine-estimator collections satisfying either $\mathbb{M}\supseteq\mathbb{M}_{\mathrm{det},B}$ or $\mathbb{M}\supseteq\mathbb{M}_{M,A}^{(0)}$, the rate $\psi_{n,M}^{\mathrm C}$ provides a minimax lower bound for convex aggregation. 

\subsection{Upper bounds}\label{subsec:upper}

In the following, we show that the minimax lower bounds established in Theorem~\ref{theo:lower-zero-intercept} are attained by a convex aggregation procedure based on minimizing the Mallows' $C_p$ criterion \citep{Mallows1973}. Specifically, the aggregation weight vector $\bw$ in $\hat{\bbf}_{\bw|\mathcal{M}}$ is estimated by minimizing  \begin{equation}\label{eq:Cp}
C_p(\bw|\mathcal{M}, \bY)
\triangleq
\|\hat{\bbf}_{\bw|\mathcal{M}}-\bY\|^2
+2\sigma^2\tr(\bA_{\bw}).
\end{equation}
Let $
\hat{\bw}
\in
\argmin_{\bw\in\mathcal{W}_{\mathrm{C}}}C_p(\bw|\mathcal{M}, \bY) 
$ denote any minimizer of (\ref{eq:Cp}) over $\mathcal{W}_{\mathrm{C}}$. The following theorems show that the resulting aggregate $\hat{\bbf}_{\hat{\bw}|\mathcal{M}}$ attains the minimax lower bounds established in Theorem~\ref{theo:lower-zero-intercept}. Since this subsection focuses on the positive side of the problem, we assume that $\xi_i$ are i.i.d. sub-Gaussian random variables satisfying $\|\xi_i\|_{\psi_2}\leq \kappa\sigma$ for constant \(\kappa\geq1\), where
$
\|\xi\|_{\psi_2}
\triangleq
\inf\{t>0:\mathbb E\exp(\xi^2/t^2)\leq2\}.
$

For any constant $A\geq 1$, define the family of affine-estimator collections whose linear components satisfy a uniform operator-norm bound by  
\begin{equation}\label{eq:operator-class-2}
  \mathbb M_{M,A} \triangleq \left\{ \{ (\bA_1,\bbb_1),\ldots,(\bA_M,\bbb_M) \}: \max_{1\le j\le M}\|\bA_j\|_{\op}\le A, \bbb_j \in \mathbb{R}^n \right\}.
\end{equation}
Note that the famility $\mathbb M_{M,A}$ imposes no restriction on the intercept vectors. In particular, it contains the zero-intercept family defined in (\ref{eq:operator-class}) and used in the lower bound construction.
 
\begin{theorem}[Upper bound]\label{theo:upper_convex_1}
Suppose the data are generated from the model (\ref{eq:model}). There is a constant \(C_{\kappa,A}>0\), depending only on \(\kappa\) and \(A\), such that, uniformly for all $\bbf \in \mathbb{R}^n$ and $\mathcal{M} \in \mathbb M_{M,A}$, for every \(x\geq0\),
\begin{equation}\label{eq:upper_small_prob}
\mathbb P_{\bbf}\left[
L_n(\hat{\bbf}_{\hat{\bw}|\mathcal{M}},\bbf)
-\inf_{\bw \in \mathcal{W}_{\mathrm{C}}} L_n(\hat{\bbf}_{\bw|\mathcal{M}}, \bbf)
>
C_{\kappa,A}\frac{\sigma^2(M+x)}{n}
\right]
\leq \exp(-x).
\end{equation}
Moreover, the upper bound in risk is also available: 
\begin{align}
\mathbb{E}L_n(\hat{\bbf}_{\hat{\bw}|\mathcal{M}},\bbf)
&\leq
\mathbb E\inf_{\bw\in\mathcal{W}_{\mathrm{C}}}
      L_n(\hat{\bbf}_{\bw|\mathcal{M}}, \bbf)
+C_{\kappa,A}\frac{\sigma^2M}{n}
\label{eq:upper_small_random}\\
&\leq
\inf_{\bw \in \mathcal{W}_{\mathrm{C}}} R_n(\hat{\bbf}_{\bw|\mathcal{M}}, \bbf)
+C_{\kappa,A}\frac{\sigma^2M}{n}.
\label{eq:upper_small_risk}
\end{align}
\end{theorem}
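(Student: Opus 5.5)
Throughout, write $\hat{\bbf}_{\bw}$ for $\hat{\bbf}_{\bw|\mathcal{M}}$ and work with the unnormalized norm $\|\cdot\|$, so that $n L_n = \|\cdot\|^2$.

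\textbf{Basic inequality.} Expanding $\|\hat{\bbf}_{\bw}-\bY\|^2=\|\hat{\bbf}_{\bw}-\bbf\|^2-2\langle\bxi,\hat{\bbf}_{\bw}-\bbf\rangle+\|\bxi\|^2$ gives
\[
C_p(\bw|\mathcal{M},\bY)=\|\hat{\bbf}_{\bw}-\bbf\|^2-2Z(\bw)+\|\bxi\|^2,
\]
where $Z(\bw)\triangleq\langle\bxi,\hat{\bbf}_{\bw}-\bbf\rangle-\sigma^2\tr(\bA_{\bw})$. The function $Z(\bw)$ is affine in $\bw$ plus the quadratic form $\bxi^\top\bA_{\bw}\bxi-\sigma^2\tr(\bA_{\bw})$.

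Let $\bw^*$ be a (random) minimizer of the loss $\|\hat{\bbf}_{\bw}-\bbf\|^2$ over $\mathcal{W}_{\mathrm C}$. The map $\bw\mapsto\hat{\bbf}_{\bw}$ is affine, so $C_p$ is a convex quadratic in the fitted vector $\bu=\hat{\bbf}_{\bw}-\bY$ plus a linear term. Since $\mathcal{W}_{\mathrm C}$ is convex, the first-order optimality of $\hat\bw$ together with strong convexity in $\hat{\bbf}_{\bw}$ yields
\[
C_p(\bw^*)-C_p(\hat\bw)\ge\|\hat{\bbf}_{\hat\bw}-\hat{\bbf}_{\bw^*}\|^2 .
\]
Combining this with the display above gives
\[
\|\hat{\bbf}_{\hat\bw}-\bbf\|^2-\|\hat{\bbf}_{\bw^*}-\bbf\|^2+\|\hat{\bbf}_{\hat\bw}-\hat{\bbf}_{\bw^*}\|^2\le 2\bigl(Z(\hat\bw)-Z(\bw^*)\bigr).
\]
Write $\bv\triangleq\hat\bw-\bw^*\in\mathcal{D}\triangleq\{\bw-\bw':\bw,\bw'\in\mathcal{W}_{\mathrm C}\}$. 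Then
\[
Z(\hat\bw)-Z(\bw^*)=\langle\bxi,\bA_{\bv}\bY+\bbb_{\bv}\rangle-\sigma^2\tr(\bA_{\bv})=:G(\bv),
\]
which is linear in $\bv$. The left-hand localization quantity is $\|\bA_{\bv}\bY+\bbb_{\bv}\|^2$.

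\textbf{Localized empirical process.} The key difficulty is that the natural seminorm $\|\bA_{\bv}\bY+\bbb_{\bv}\|$ is itself random. I would therefore localize with the deterministic seminorm
\[
\rho(\bv)^2\triangleq\|\bA_{\bv}\bbf+\bbb_{\bv}\|^2 ,
\]
and split
\[
G(\bv)=\langle\bxi,\bA_{\bv}\bbf+\bbb_{\bv}\rangle+\bigl(\bxi^\top\bA_{\bv}\bxi-\sigma^2\tr\bA_{\bv}\bigr)=:G_1(\bv)+G_2(\bv).
\]
The two pieces are handled as follows.
\begin{enumerate}
\item $G_1$ is a linear sub-Gaussian process on a space of dimension at most $M$ (the span of $\{\bA_m\bbf+\bbb_m\}_{m\le M}$). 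Projecting $\bxi$ onto this span gives, with probability at least $1-e^{-x}$,
\[
\sup_{\bv\in\mathcal D} \frac{G_1(\bv)}{\rho(\bv)}\le C\kappa\sigma\sqrt{M+x}.
\]
\item For $G_2$, linearity in $\bv$ gives $G_2(\bv)=\sum_m v_m Q_m$ with $Q_m=\bxi^\top\bA_m\bxi-\sigma^2\tr\bA_m$. This does not scale with $\rho$, so instead I would write $\bxi^\top\bA_{\bv}\bxi=\langle\bxi,\bA_{\bv}\bxi\rangle$ and relate it to the random seminorm. The plan is to use
\[
\|\bA_{\bv}\bY+\bbb_{\bv}\|\ge\rho(\bv)-\|\bA_{\bv}\bxi\| ,
\]
and to bound the centered quadratic form via Hanson--Wright, uniformly over an $\epsilon$-net of the $M$-dimensional set $\mathcal D$. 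Heuristically, $\langle\bxi,\bA_{\bv}\bxi\rangle$ minus its mean fluctuates like $\sigma^2\|\bA_{\bv}\|_{\F}\sqrt{M+x}+\sigma^2\|\bA_{\bv}\|_{\op}(M+x)$, with $\|\bA_{\bv}\|_{\op}\le2A$.
\end{enumerate}
I expect the main obstacle to be the term $\sigma^2\|\bA_{\bv}\|_{\F}\sqrt{M}$, which must be absorbed into $\|\hat{\bbf}_{\hat\bw}-\hat{\bbf}_{\bw^*}\|^2$. The quantity $\|\bA_{\bv}\bxi\|^2$ concentrates around $\sigma^2\|\bA_{\bv}\|_{\F}^2$, so I would compare against $\mathbb{E}\|\bA_{\bv}\bY+\bbb_{\bv}\|^2=\rho(\bv)^2+\sigma^2\|\bA_{\bv}\|_{\F}^2$ and show that uniformly over $\mathcal D$ (via a net and Hanson--Wright, for $\|\bA_{\bv}\bxi\|^2$ and for the cross term $\langle \bA_{\bv}\bxi,\bA_{\bv}\bbf+\bbb_{\bv}\rangle$) one has
\[
\|\bA_{\bv}\bY+\bbb_{\bv}\|^2\ge\tfrac12\bigl(\rho(\bv)^2+\sigma^2\|\bA_{\bv}\|_{\F}^2\bigr)-C\sigma^2A^2(M+x).
\]
Establishing this lower isometry is the hardest step; a peeling argument over the shells of $\rho(\bv)^2+\sigma^2\|\bA_{\bv}\|_{\F}^2$ should handle the uniformity. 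The Hanson--Wright net bound needs care because $\mathcal D$ is $M$-dimensional but the matrices are $n\times n$; the net is taken in $\bv$, with the Lipschitz constants controlled by $A$.

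\textbf{Conclusion.} With these bounds, $2G(\bv)\le\tfrac14\bigl(\rho^2+\sigma^2\|\bA_{\bv}\|_{\F}^2\bigr)+C\sigma^2(M+x)$. Absorbing this into the strong-convexity term yields
\[
\|\hat{\bbf}_{\hat\bw}-\bbf\|^2-\min_{\bw\in\mathcal{W}_{\mathrm C}}\|\hat{\bbf}_{\bw}-\bbf\|^2\le C_{\kappa,A}\,\sigma^2(M+x)
\]
with probability at least $1-e^{-x}$. Dividing by $n$ gives (\ref{eq:upper_small_prob}). Integrating the tail in $x$ gives (\ref{eq:upper_small_random}), and (\ref{eq:upper_small_risk}) follows from $\mathbb E\inf\le\inf\mathbb E$.
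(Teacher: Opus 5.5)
Your plan is correct and is essentially the paper's proof. It uses the same strong-convexity basic inequality $\|\hat{\bbf}_{\hat\bw}-\bbf\|^2-\|\hat{\bbf}_{\bw^*}-\bbf\|^2\le 2G(\bv)-\|\bV_{\bv}\|^2$, the same localizing seminorm $\|\bA_{\bv}\bbf+\bbb_{\bv}\|^2+\sigma^2\|\bA_{\bv}\|_{\F}^2$ (which your lower isometry for $\|\bV_{\bv}\|^2$ brings in), Hanson--Wright, and peeling. The only difference is bookkeeping: the paper centers $2G(\bv)-\|\bV_{\bv}\|^2$ in one go as $W(\bu)-\rho(\bu)^2$, with $\rho(\bu)^2=\|\ba_{\bu}\|^2+\sigma^2\|\bA_{\bu}\|_{\F}^2$, and chains $W$ once, whereas you treat $G_1$, $G_2$ and the isometry deviation $\|\bV_{\bv}\|^2-\mathbb{E}\|\bV_{\bv}\|^2$ separately.

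The one point to make precise is the net. It must be taken, or the chaining done, in the paper's control norm $\max\{(\|\bA_{\bh}\bbf+\bbb_{\bh}\|^2+\sigma^2\|\bA_{\bh}\|_{\F}^2)^{1/2}/r,\ \|\bh\|_1/2\}$, whose $\ell_1$ part controls $\|\bA_{\bh}\|_{\op}\le A\|\bh\|_1$. Raw Lipschitz bounds in $\bv$ involve $\|\bbf\|$, $\|\bbb_j\|$ and $\|\bxi\|^2\approx n\sigma^2$, so they would break uniformity over $\bbf\in\mathbb{R}^n$ and cost a logarithmic factor.
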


We first provide a brief remark on the proof of Theorem~\ref{theo:upper_convex_1}. The proof of this theorem relies on three key ideas. First, in Lemma~\ref{lem:strong_conv} of the Appendix, we explore the strong convexity of the Mallows' $C_p$ criterion, which produces a negative quadratic term that is essential for deriving a sharp oracle inequality. Second, in Step~2 of the proof of Theorem~\ref{theo:upper_convex_1}, the stochastic remainder is localized through a seminorm, which captures both signal and noise fluctuations. Third, mixed-tail chaining and peeling balance the localized fluctuations against this negative quadratic drift. This localized approach sharpens the earlier global analyses in \cite{Bellec2018} about the Mallows aggregation estimator and related works of \cite{peng2025mallows}, while avoiding restrictions on the mean vector and intercept vectors. 

We then discuss the implications of Theorem~\ref{theo:upper_convex_1} for Q1--Q2 raised in Section~\ref{sec:convx_review}. Note that the leading constants in front of the oracle loss and oracle risk in Theorem~\ref{theo:upper_convex_1} are all equal to one. Therefore, the upper bounds in (\ref{eq:upper_small_prob}), (\ref{eq:upper_small_random}), and (\ref{eq:upper_small_risk}) yield sharp oracle inequalities, in high probability and in expectation, for convex aggregation based on minimizing (\ref{eq:Cp}). Moreover, the remainder terms in Theorem~\ref{theo:upper_convex_1} are of order $M/n$, which matches the minimax rate $\psi_{n,M}^{\mathrm C}$ in (\ref{eq:convex_rate}) when \(M\leq\sqrt n\). Thus, in the regime $M\leq\sqrt n$, Theorems~\ref{theo:lower-zero-intercept} and~\ref{theo:upper_convex_1} together establish that the minimax rates for convex aggregation in probability and in expectation are of the same order: 
\begin{equation*}
  \psi_n(\mathbb{R}^n,  \mathbb M_{M,A}, \mathcal{W}_{\mathrm{C}}) \asymp \bar{\psi}_n(\mathbb{R}^n,  \mathbb M_{M,A}, \mathcal{W}_{\mathrm{C}}) \asymp \psi_{n,M}^{\mathrm C}. 
\end{equation*}

For the large-\(M\) regime, we impose slightly stronger conditions on $\bbf$ and on the family of candidate affine-estimator collections. Define 
\begin{equation}\label{eq:operator-class-3}
  \mathbb M_{M,A,B} \triangleq \left\{ \{ (\bA_1,\bbb_1),\ldots,(\bA_M,\bbb_M) \}: \max_{1\le j\le M}\|\bA_j\|_{\op}\le A, \max_{1\leq j\leq M}\|\bbb_j\|_n\leq B \right\}, 
\end{equation}
where $A \geq 1$ and $B>0$ are constants. Compared with (\ref{eq:operator-class-2}), the family $\mathbb M_{M,A,B}$ additionally imposes a uniform bound on the intercept vectors. A similar restriction to (\ref{eq:operator-class-3}) was also imposed in Section~7.2 of \cite{Bellec2018}, whereas the resulting upper bound for the estimator $\hat{\bbf}_{\hat{\bw}|\mathcal M}$ is suboptimal. The following theorem shows that convex aggregation of affine estimators over $\mathbb M_{M,A,B}$ by minimizing Mallows' $C_p$ criterion is indeed minimax optimal.

\begin{theorem}[Upper bound]\label{theo:upper_convex_2}
  Suppose the data are generated from model~(\ref{eq:model}) with the sub-Gaussian errors. Let $\Lambda^2 \triangleq \sigma^2+F^2+B^2$. There exists a constant $C_{\kappa, A}>0$, depending only on $\kappa$ and $A$, such that, uniformly over all $\bbf \in \mathcal{F}_n(F)$ and $\mathcal{M} \in$ $\mathbb{M}_{M, A, B}$, for every integer $1 \leq m \leq M$ and every $x \geq 0$,
  \begin{equation}\label{eq:maurey_general_tail}
\mathbb P_{\bbf}\left[
L_n(\hat{\bbf}_{\hat{\bw}|\mathcal M},\bbf)
-\inf_{\bw\in\mathcal W_{\mathrm C}}
L_n(\hat{\bbf}_{\bw|\mathcal M},\bbf) >
C_{\kappa,A}
\left[
\Lambda^2
\left\{
\frac1m+
\frac mn\log\left(\frac{2eM}{m}\right)
\right\}
+
\frac{\sigma^2x}{n}
\right]
\right]
\leq
\exp(-x).
\end{equation}
Consequently, if $M>\sqrt{n}$ and $\log (e M / \sqrt{n}) \leq n$, then, for every $x \geq 0$, 
\begin{equation}\label{eq:maurey_rate_probability}
\mathbb P_{\bbf}\left[
L_n(\hat{\bbf}_{\hat{\bw}|\mathcal M},\bbf)
-
\inf_{\bw\in\mathcal W_{\mathrm C}}
L_n(\hat{\bbf}_{\bw|\mathcal M},\bbf)
>
C_{\kappa,A}
\left\{
\Lambda^2
\sqrt{\frac{\log(eM/\sqrt n)}{n}}
+
\frac{\sigma^2x}{n}
\right\}
\right]
\leq
\exp(-x),
\end{equation}
and
\begin{equation}\label{eq:maurey_rate}
\mathbb E L_n(\hat{\bbf}_{\hat{\bw}|\mathcal M},\bbf)
\leq
\inf_{\bw\in\mathcal W_{\mathrm C}}
R_n(\hat{\bbf}_{\bw|\mathcal M},\bbf)
+
C_{\kappa,A}\Lambda^2
\sqrt{\frac{\log(eM/\sqrt n)}{n}}.
\end{equation}
\end{theorem}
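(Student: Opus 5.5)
The plan is to reduce the large-$M$ regime to the small-$M$ analysis of Theorem~\ref{theo:upper_convex_1} through two Maurey sparsification arguments, with a union bound over supports of size $m$ in between. Write $\bg_{\bw}\triangleq \hat{\bbf}_{\bw|\mathcal M}$. The $C_p$ criterion satisfies the basic identity
\begin{equation*}
C_p(\bw|\mathcal M,\bY)/n = \|\bg_{\bw}-\bbf\|_n^2 + \|\bxi\|_n^2 - Z(\bw),
\end{equation*}
where $Z(\bw)\triangleq \frac{2}{n}\{\langle \bxi, \bA_{\bw}\bxi\rangle - \sigma^2\tr(\bA_{\bw})\} + \frac{2}{n}\langle \bxi, \bA_{\bw}\bbf+\bbb_{\bw}-\bbf\rangle$. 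The map $\bw\mapsto Z(\bw)$ is linear, and $Z$ is centered. The strong convexity of Lemma~\ref{lem:strong_conv} says that for any $\bw\in\mathcal W_{\mathrm C}$,
\begin{equation*}
L_n(\bg_{\hat\bw},\bbf)+\|\bg_{\hat\bw}-\bg_{\bw}\|_n^2\le L_n(\bg_{\bw},\bbf)+Z(\hat\bw)-Z(\bw).
\end{equation*}
It therefore suffices to control $Z(\hat\bw)-Z(\bw)-\tfrac12\|\bg_{\hat\bw}-\bg_{\bw}\|_n^2$ uniformly.

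\textbf{Step 1 (sparse oracle).} Let $\bw^*$ minimize $L_n(\bg_{\bw},\bbf)$ over the simplex; it is random. I would draw $m$ i.i.d.\ indices from $\bw^*$ and let $\bar\bw$ be the empirical weights. Conditionally on $\bY$, the bias–variance identity of Maurey gives
\begin{equation*}
\mathbb E_{\mathrm{Maurey}} L_n(\bg_{\bar\bw},\bbf)\le L_n(\bg_{\bw^*},\bbf)+\frac1m\max_j\|\bg_{\be_j}\|_n^2 .
\end{equation*}
The right side is at most $L_n(\bg_{\bw^*},\bbf)+\frac{C}{m}(A^2\|\bY\|_n^2+B^2)$, and $\|\bY\|_n^2\lesssim F^2+\sigma^2(1+x/n)$ with probability $1-e^{-x}$. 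Hence some $m$-sparse $\bar\bw$ exists with $L_n(\bg_{\bar\bw},\bbf)\le\inf L_n+C\Lambda^2/m$ plus an $x/n$ term.

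\textbf{Step 2 (sparse estimator).} Apply the same device to $\hat\bw$. The linear $Z$ averages exactly under the Maurey draw, so the expectation of $Z(\tilde\bw)$ equals $Z(\hat\bw)$, where $\tilde\bw$ is the $m$-sparse resample of $\hat\bw$. Meanwhile the quadratic terms change by at most $C\Lambda^2/m$. Thus $Z(\hat\bw)-Z(\bar\bw)$ is replaced by an average of $Z(\tilde\bw)-Z(\bar\bw)$ over $m$-sparse pairs, and the negative quadratic $-\|\bg_{\hat\bw}-\bg_{\bar\bw}\|_n^2$ is likewise replaced by its sparse version up to $C\Lambda^2/m$.

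\textbf{Step 3 (uniform bound on $2m$-sparse supports).} Fix a support $S$ with $|S|\le 2m$. On the simplex restricted to $S$, the localized chaining and peeling argument from Step~2 of the proof of Theorem~\ref{theo:upper_convex_1} bounds the supremum of $Z(\bu)-Z(\bv)-\frac12\|\bg_{\bu}-\bg_{\bv}\|_n^2$ by $C\sigma^2(m+t)/n$ with probability $1-e^{-t}$. That argument uses a Hanson–Wright bound for the chaos part and a sub-Gaussian bound for the linear part, in the seminorm $\|\bA_{\bu-\bv}\bxi\|+\|\bA_{\bu-\bv}\bbf+\bbb_{\bu-\bv}\|$. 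For the lines between the two sparse points, I would use the uniform concentration over segments. Taking a union bound over $\binom{M}{2m}\le (eM/m)^{2m}$ supports, with $t=x+2m\log(eM/m)$, gives
\begin{equation*}
C\Lambda^2\left\{\frac1m+\frac mn\log\frac{2eM}{m}\right\}+C\sigma^2\frac xn,
\end{equation*}
which is (\ref{eq:maurey_general_tail}).

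\textbf{Consequences.} Choosing $m\asymp\sqrt{n/\log(eM/\sqrt n)}$, which lies in $[1,M]$ under the stated conditions, gives (\ref{eq:maurey_rate_probability}). Integrating the tail and using $\mathbb E\inf_{\bw} L_n\le\inf_{\bw} R_n$ gives (\ref{eq:maurey_rate}).

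\textbf{Main obstacle.} The hardest part is Step 2. The Maurey resampling must be made compatible with the localized negative quadratic term, because the quadratic in $\tilde\bw$ differs from that in $\hat\bw$ by a variance term. That variance involves $\|\bA_j\bY\|$, which must be controlled with high probability uniformly over $j$. I expect this to work because $\max_j\|\bA_j\bY+\bbb_j\|_n\le A\|\bY\|_n+B$ deterministically once $\|\bY\|_n$ is bounded. Conditioning on the Maurey randomness is harmless since it is independent of $\bxi$.
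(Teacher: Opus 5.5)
Your proof is correct, but it takes a different route from the paper in the second Maurey step and in the uniform control.

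\textbf{The paper's route.} The paper does not invoke Lemma~\ref{lem:strong_conv} at $\hat\bw$ here. For each sampled grid point $\bg$, it compares $\bg$ with the minimizer $\bv_{\bg}$ of $\mathcal L$ on the segment $[\bw_1,\bg]$. It then uses three facts:
\begin{itemize}
\item convexity of $\mathcal L$ along that segment;
\item the zeroth-order minimality $C_p(\hat\bw)\le C_p(\bv_{\bg})$;
\item the fact that $\mathcal L$ and $C_p$ carry the same Maurey variance $\mathcal V(\bY)/m$.
\end{itemize}
Because $\bv_{\bg}$ is generally not a grid point, this requires the segment inequality. That is Lemma~\ref{lem:segment}, i.e.\ \eqref{eq:3.58} for a two-element dictionary, union-bounded over all $N_m^2$ grid-to-grid segments.

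\textbf{Your route.} You apply Lemma~\ref{lem:strong_conv} pathwise with the data-dependent comparator $\bar\bw$ (the paper's $\bw_1$). This gives $\mathcal L(\hat\bw)-\mathcal L(\bar\bw)\le G(\hat\bw-\bar\bw)-\|\bV_{\hat\bw-\bar\bw}\|^2$. You then average over the Maurey resample $\tilde\bw$ of $\hat\bw$. Since $G$ is linear, it averages exactly. For the quadratic, $\mathbb E_\Theta\|\bV_{\tilde\bw-\bar\bw}\|^2=\|\bV_{\hat\bw-\bar\bw}\|^2+\mathcal V(\bY)/m$ with $\mathcal V(\bY)\le\mathcal H(\bY)$. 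This identity is the precise content of your claim that the quadratic changes by at most $C\Lambda^2/m$, and it goes in the needed direction. What remains is $G(\bh)-\|\bV_{\bh}\|^2=W(\bh)-\rho(\bh)^2$ at differences of two $m$-sparse points. Your union over $\binom{M}{2m}$ supports, with the $2m$-dimensional chaining and peeling bound on each face, controls this at the same $m\log(eM/m)$ entropy cost as the paper's $2\log N_m$.

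\textbf{Comparison.} Your route avoids segment minimizers and the $C_p$ Maurey identity, so it is shorter. It also needs less than you use. Both $\tilde\bw$ and $\bar\bw$ lie in the finite set $\mathcal G_m$. Hence a single-point bound from Lemma~\ref{lem:subgaussian}, giving $W(\bh)-\rho(\bh)^2\le C_{\kappa,A}\sigma^2 x$ for fixed $\bh$, union-bounded over the $N_m^2$ grid pairs, already suffices, with no chaining or peeling. Your remark about segments in Step~3 plays no role in your argument. The paper's segment device is needed only because it compares against $\bv_{\bg}$ using the zeroth-order minimality of $\hat\bw$; your use of the variational inequality at $\hat\bw$ removes that need.

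\textbf{One justification to correct.} The Maurey draw is not independent of $\bxi$, since its law depends on $\hat\bw(\bY)$. The step is valid for a different reason: pathwise in $\bY$, the averaged quantity is bounded by a maximum over a deterministic finite family (supports or grid pairs), and the union bound is applied to that maximum.
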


The proof combines a uniform concentration inequality over line segments with two applications of Maurey sampling: the first approximates the realized convex oracle, while the second sparsifies the estimated Mallows weight vector. Although Maurey sampling has previously been used for convex aggregation of deterministic candidates \citep[see, e.g.,][]{yang2004aggregating, Lecue2013convex, Bellec2019} and affine estimators \citep{Bellec2018}, the key novelty here is its twofold use together with uniform control over all grid-to-grid segments. This combination yields a sharp oracle inequality in Theorem~\ref{theo:upper_convex_2} when $M$ is large.  

Combining the minimax lower bound in Theorem~\ref{theo:lower-zero-intercept} with the upper bounds in Theorems~\ref{theo:upper_convex_1}--\ref{theo:upper_convex_2} resolves Questions Q1--Q3. Regarding Q1, allowing the candidates to be affine estimators constructed from the same data does not alter the minimax rate of convex aggregation in terms of \(n\) and \(M\). More precisely, for fixed \(F\), \(A\), \(B\), and \(\sigma>0\),
\begin{equation*}
\psi_n\bigl(\mathcal{F}_n(F),\mathbb M_{M,A,B},\mathcal{W}_{\mathrm C}\bigr)
\asymp
\bar{\psi}_n\bigl(\mathcal{F}_n(F),\mathbb M_{M,A,B},\mathcal{W}_{\mathrm C}\bigr)
\asymp
\sigma^2\psi_{n,M}^{\mathrm C}.
\end{equation*}
Regarding Q2, minimizing Mallows' \(C_p\) criterion over the simplex yields a minimax-optimal aggregation procedure, both in probability and in expectation, uniformly over \(\bbf\in\mathcal{F}_n(F)\) and \(\mathcal M\in\mathbb M_{M,A,B}\). Finally, regarding Q3, the suboptimal remainders in (\ref{eq:MMA_1})--(\ref{eq:MMA_2}) are technical rather than intrinsic. They can be sharpened by exploiting the localized empirical-process structure under sub-Gaussian assumption and, in the large-\(M\) regime, a refined argument that combines uniform line-segment control with two applications of Maurey sampling. 

%
%
%
%

\section{Linear aggregation}\label{sec:linear_agg}

In this section, we study the linear aggregation of $M$ affine estimators
under the weight constraint $\mathcal{W}_{\mathrm{L}}=\R^M$. As discussed in
Section~\ref{subsec:linear_review}, the minimax rates and optimal procedures
for linear aggregation of affine estimators remain unresolved. We
address this problem by establishing matching lower and upper bounds under a
\emph{minimum-stable-rank} condition on the candidate collection.

\subsection{Lower bounds}

We first introduce some additional notations. For a candidate collection
$\mathcal M=\{(\bA_j,\bbb_j):1\leq j\leq M\}$, define the linear map $T_{\mathcal M}:\R^M\to\R^{n\times n}\times\R^n$ by $T_{\mathcal M}(\bw) \triangleq (\bA_{\bw},\bbb_{\bw})$. The linear space generated by $\mathcal M$ is $\mathcal V(\mathcal M) \triangleq \{ (\bA_{\bw},\bbb_{\bw}) : \bw\in\R^M \}$, and its effective dimension is given by 
$
  D(\mathcal M)
  \triangleq
  \dim\bigl(\mathcal V(\mathcal M)\bigr)
  =
  \operatorname{rank}(T_{\mathcal M})
  \leq M.
$
Let
$
  \mathcal A(\mathcal M)
  \triangleq
  \left\{
    \bA\in\R^{n\times n}:
    (\bA,\bbb)\in\mathcal V(\mathcal M)
    \text{ for some }\bbb\in\R^n
  \right\}.
$
For a nonzero matrix $\bA$, recall that its stable rank is defined as 
$
  \operatorname{sr}(\bA)
  \triangleq
  \|\bA\|_{\F}^2/\|\bA\|_{\op}^2.
$
We define the minimum stable rank over the space $\mathcal A(\mathcal M)$ by
\begin{equation}\label{eq:minimum-stable-rank}
  s_{\mathcal A}(\mathcal M)
  \triangleq
  \inf_{\substack{
    \bA\in\mathcal A(\mathcal M)\\
    \bA\neq\boldsymbol{0}
  }}
  \operatorname{sr}(\bA).
\end{equation}
If $\mathcal A(\mathcal M)=\{\boldsymbol{0}\}$, we set
$s_{\mathcal A}(\mathcal M)=+\infty$. 

For integers $1\leq D\leq M$ and constants $A\geq1$, $B>0$, and $s\geq1$,
define
\begin{equation}\label{eq:diffuse-collection-class}
  \mathbb M_{M,D,A,B}^{s}
  \triangleq
  \left\{
    \mathcal M\in\mathbb M_{M,A,B}:
    D(\mathcal M)=D,
    \ s_{\mathcal A}(\mathcal M)\geq s
  \right\}.
\end{equation}
The family introduced in (\ref{eq:diffuse-collection-class}) is a subfamily of $\mathbb M_{M,A,B}$ for which the linear space generated by each candidate collection has effective dimension $D$. The condition $s_{\mathcal A}(\mathcal M)\geq s$ requires every nonzero matrix in $\mathcal A(\mathcal M)$ to have stable rank at least $s$, thereby preventing its Frobenius norm from being excessively concentrated in a single dominant singular direction.

The class $\mathbb M_{M,D,A,B}^{s}$ contains several natural candidate collections. First, if $\bA_j=\boldsymbol{0}$ for every $j$ and
$\dim\{\operatorname{span}(\bbb_1,\ldots,\bbb_M)\}=D$, then
$D(\mathcal M)=D$
  and
  $s_{\mathcal A}(\mathcal M)=+\infty$.
Thus the classical linear aggregation of deterministic vectors is included as a special case. Second, suppose
$
  \R^n=\mathcal H_1\oplus\cdots\oplus\mathcal H_D$ 
  and 
  $\dim(\mathcal H_\ell)\geq s$,
and let $\bP_\ell$ be the orthogonal projection onto $\mathcal H_\ell$.  
Define the nested projection estimators by 
$
  \bA_j=\sum_{\ell=1}^j\bP_\ell$,
  $
  \bbb_j=\boldsymbol{0}$,
  for 
  $1\leq j\leq D$. Then every matrix in $\mathcal A(\mathcal M)$ has the form
$
  \bA=\sum_{\ell=1}^D c_\ell\bP_\ell.
$
Consequently,
\[
  \frac{\|\bA\|_{\F}^2}{\|\bA\|_{\op}^2}
  =
  \frac{\sum_{\ell=1}^D
    \dim(\mathcal H_\ell)c_\ell^2}
       {\max_{\ell}c_\ell^2}
  \geq
  \min_{\ell}\dim(\mathcal H_\ell)
  \geq s.
\]

\begin{theorem}[Lower bound]
\label{theo:diffuse-linear-lower}
Suppose that $\bxi\sim N(\boldsymbol{0},\sigma^2\bI_n)$. Fix $F>0$,
$A\geq1$, and $B>0$. There exist universal constants $c_1,c_2>0$ such
that, for all integers $2\leq D\leq M$ with $D\leq n$ and every $s\geq1$,
there is a deterministic collection
$\mathcal M_0\in\mathbb M_{M,D,A,B}^{s}$ satisfying
\begin{align}
&\inf_{\tilde{\bbf}}
\sup_{\bbf\in\mathcal F_n(F)}
\mathbb P_{\bbf}\!\left[
  L_n(\tilde{\bbf},\bbf)
  -\inf_{\bw\in\R^M}
    L_n(\hat{\bbf}_{\bw|\mathcal M_0},\bbf)
  \geq
  c_1
  \left\{
    F^2\wedge\frac{\sigma^2D}{n}
  \right\}
\right]
\geq c_2,
\label{eq:diffuse-lower-prob}\\
&\inf_{\tilde{\bbf}}
\sup_{\bbf\in\mathcal F_n(F)}
\left\{
  R_n(\tilde{\bbf},\bbf)
  -\inf_{\bw\in\R^M}
    R_n(\hat{\bbf}_{\bw|\mathcal M_0},\bbf)
\right\}
\geq
c_1
\left\{
  F^2\wedge\frac{\sigma^2D}{n}
\right\}.
\label{eq:diffuse-lower-risk}
\end{align}
\end{theorem}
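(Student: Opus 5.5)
We build $\mathcal M_0$ from deterministic candidates, so the problem becomes classical linear aggregation of vectors. Concretely, pick orthonormal (in $\|\cdot\|_n$) vectors $\bv_1,\ldots,\bv_D\in\R^n$, set $\bbb_j=B\bv_j$ for $j\le D$, take $\bbb_j=B\bv_D$ for $D<j\le M$ (repetition keeps the span $D$-dimensional), and set $\bA_j=\boldsymbol 0$ for all $j$. Then $\|\bA_j\|_{\op}=0\le A$, $\|\bbb_j\|_n=B$, and $D(\mathcal M_0)=\dim\operatorname{span}(\bbb_j)=D$. Since $\mathcal A(\mathcal M_0)=\{\boldsymbol 0\}$, we have $s_{\mathcal A}(\mathcal M_0)=+\infty\ge s$, so $\mathcal M_0\in\mathbb M^s_{M,D,A,B}$ for every $s$. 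This is exactly the first example noted after (\ref{eq:diffuse-collection-class}).

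Restrict $\bbf$ to $V\triangleq\operatorname{span}(\bv_1,\ldots,\bv_D)\cap\mathcal F_n(F)$. For such $\bbf$, the oracle loss and oracle risk over $\bw\in\R^M$ are both zero, because $\bbf=\bbb_{\bw}$ for some $\bw$ and the candidates are deterministic. The excess loss is then just $L_n(\tilde\bbf,\bbf)$, and it suffices to lower bound the minimax estimation error over the ball $\{\bbf\in V:\|\bbf\|_n\le F\}$ in Gaussian noise.

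For this we use a Varshamov--Gilbert packing and Fano's lemma (or Tsybakov's Theorem 2.5). Take hypercube points $\bbf_\omega=\gamma\sum_{\ell=1}^D\omega_\ell\bv_\ell$ with $\omega\in\{0,1\}^D$, restricted to a packing $\Omega$ with $|\Omega|\ge 2^{D/8}$ and pairwise Hamming distance at least $D/8$. This requires $D\ge 8$; for $2\le D<8$ use a two-point Le Cam argument instead, with the constants absorbed. Choose $\gamma^2=c\min\{F^2/D,\sigma^2/n\}$, which gives $\|\bbf_\omega\|_n^2\le\gamma^2D\le F^2$. Pairwise separations then satisfy $\|\bbf_\omega-\bbf_{\omega'}\|_n^2\ge\gamma^2D/8\asymp F^2\wedge\sigma^2D/n$. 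The Kullback--Leibler divergences are $n\|\bbf_\omega-\bbf_{\omega'}\|_n^2/(2\sigma^2)\le n\gamma^2D/(2\sigma^2)\le cD/2$, which is at most $\alpha\log|\Omega|$ for small $c$.

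Fano then gives \eqref{eq:diffuse-lower-prob} with universal $c_1,c_2$. Markov's inequality converts this into \eqref{eq:diffuse-lower-risk}, since the oracle risk is zero on these hypotheses. The constants do not depend on $F,A,B,s$, because $F$ enters only through the minimum and the scale is $\sigma^2/n$. Note that $D\le n$ is needed for orthonormal $\bv_\ell$ to exist. We expect no real obstacle here; the one point to check is that $F$ small relative to $\sigma^2D/n$ correctly produces the $F^2$ branch, which the choice of $\gamma$ handles.
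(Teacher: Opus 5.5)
Your proof is correct. It departs from the paper only in the choice of hard collection when $s\le n$.

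You use one deterministic-vector collection ($\bA_j=\boldsymbol 0$) for every $s$, relying on the convention $s_{\mathcal A}(\mathcal M_0)=+\infty$ when $\mathcal A(\mathcal M_0)=\{\boldsymbol 0\}$. This is exactly the paper's construction for $s>n$, and it is legitimate for all $s$ under the paper's definitions.

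For $s\le n$, the paper instead builds a different collection:
\begin{itemize}
\item every candidate gets the same full-rank matrix $\mathbf Q=\tfrac12\bI_n$;
\item $d=D-1$ candidates get intercepts $\beta\be_j$, and the last gets $\boldsymbol 0$;
\item the oracle weights sum to zero, so $\bA_{\bw(\boldsymbol\omega)}=\boldsymbol 0$ and the aggregate equals $\bbf_{\boldsymbol\omega}$ exactly.
\end{itemize}
This sacrifices one dimension. In exchange, it exhibits a hard collection with genuinely data-dependent candidates and finite minimum stable rank $s_{\mathcal A}(\mathcal M_0)=n$. So the lower bound is shown not to be an artifact of degenerate deterministic candidates.

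That gain is mostly cosmetic, however. The random component cancels in the paper's oracle, so the hardness is still carried entirely by the intercepts. The Varshamov--Gilbert/Fano step is then the same as yours.

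Your route is shorter and uniform in $s$. Your Le Cam fallback for $2\le D<8$ is also slightly more careful than the paper. The paper applies the Varshamov--Gilbert packing and Fano's lemma for every $d=D-1\ge1$ without comment, even though the cited packing lemma needs dimension at least $8$ and Fano needs at least three hypotheses.
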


If $s\leq n$, the collection $\mathcal M_0$ can be chosen so that every candidate matrix is
the same nonzero full-rank matrix and
$s_{\mathcal A}(\mathcal M_0)=n$. For $s>n$, the same lower bounds hold using a
deterministic-vector subfamily, for which
$s_{\mathcal A}(\mathcal M_0)=+\infty$.

\subsection{Upper bounds}

To construct a linear aggregation estimator over the family $\mathbb M_{M,D,A,B}^{s}$, we consider a truncated Mallows aggregation procedure. Recall the Mallows' $C_p$ criterion defined in (\ref{eq:Cp}), and let $\hat{\bw}^{\mathrm{L}} \in \argmin_{\bw \in \mathbb{R}^M} C_p(\bw|\mathcal M,\bY)$ denote any weight vector that minimizes this criterion over $\R^M$. The corresponding linear aggregate is $\hat{\bbf}_{\mathcal M}^{\mathrm{Cp}}
  \triangleq \hat{\bbf}_{\hat{\bw}^{\mathrm{L}}|\mathcal M}$. If the minimum of the Mallows' $C_p$ criterion is
not attained, set
$\hat{\bbf}_{\mathcal M}^{\mathrm{Cp}}
  =\boldsymbol{0}$. For $r>0$, let $\Pi_r$ denote Euclidean projection onto the closed ball
$\{\bv\in\R^n:\|\bv\|\leq r\}$. For the parameter space
$\mathcal F_n(F)$, define the truncated Mallows linear aggregate by
\begin{equation}\label{eq:truncated-cp}
  \hat{\bbf}_{\mathcal M}^{\mathrm{TCp}}
  \triangleq
  \Pi_{F\sqrt n}
  \left(\hat{\bbf}_{\mathcal M}^{\mathrm{Cp}}
  \right).
\end{equation}
We then establish the following sharp oracle inequality for $\hat{\bbf}_{\mathcal M}^{\mathrm{TCp}}$. Here, we use notation $a \wedge b \triangleq \min\{a, b \}$. 

\begin{theorem}[Upper bound]
\label{theo:stable-upper}
Suppose that the errors satisfy the sub-Gaussian condition. Fix $F>0$, $A\geq1$, and $B>0$. There are constants
$c_\kappa,C_\kappa>0$, depending only on $\kappa$, such that the following
holds. Let
$\mathcal M\in\mathbb M_{M,D,A,B}^{s}$ and
$\bbf\in\mathcal F_n(F)$. Then, for every $x\geq0$,
\begin{align}
&\mathbb P_{\bbf}\!\left[
  L_n(\hat{\bbf}_{\mathcal M}^{\mathrm{TCp}},\bbf)
  -\inf_{\bw\in\R^M}
    L_n(\hat{\bbf}_{\bw|\mathcal M},\bbf)
  >
  C_\kappa
  \left[
    F^2\wedge
    \frac{\sigma^2}{n}
    \left\{
      D+x+\frac{(D+x)^2}{s}
    \right\}
  \right]
\right]
\nonumber\\
&\hspace{35mm}
\leq
2\exp(-x)
+2\exp\{C_\kappa D-c_\kappa s\}, 
\label{eq:stable-upper-prob}
\end{align}
where we define $(D+x)^2/(+\infty)=0$ and
$\exp(-c_\kappa\cdot+\infty)=0$. In particular, if
$
  s\geq C_\kappa(D+x),
$
then
\begin{align}
\mathbb P_{\bbf}\!\left[
  L_n(\hat{\bbf}_{\mathcal M}^{\mathrm{TCp}},\bbf)
  -\inf_{\bw\in\R^M}
    L_n(\hat{\bbf}_{\bw|\mathcal M},\bbf)
  >
  C_\kappa
  \left\{
    F^2\wedge\frac{\sigma^2(D+x)}{n}
  \right\}
\right]
&\leq4\exp(-x).
\label{eq:stable-upper-prob-simplified}
\end{align}
If
$
  s
  \geq
  C_\kappa
  \{
    D+
    \log(
      e+\frac{nF^2}{\sigma^2D}
    )
  \},
$
then
\begin{align}
R_n(\hat{\bbf}_{\mathcal M}^{\mathrm{TCp}},\bbf)
&\leq
\inf_{\bw\in\R^M}
  R_n(\hat{\bbf}_{\bw|\mathcal M},\bbf)
+C_\kappa
\left\{
  F^2\wedge\frac{\sigma^2D}{n}
\right\}.
\label{eq:stable-upper-risk}
\end{align}
\end{theorem}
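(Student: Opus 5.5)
The plan is to reduce the linear-aggregation problem to a least-squares problem over the $D$-dimensional affine family $\mathcal V(\mathcal M)$. Write any element as $T_{\mathcal M}(\bw)=(\bA_{\bw},\bbb_{\bw})$ and the fitted vector as $\hat\bbf_{\bw}=\bA_{\bw}\bY+\bbb_{\bw}$. The $C_p$ criterion is then a quadratic in the $D$ coordinates of $\mathcal V(\mathcal M)$. Expanding $\|\hat\bbf_{\bw}-\bY\|^2=\|\hat\bbf_{\bw}-\bbf\|^2-2\langle\bxi,\hat\bbf_{\bw}-\bbf\rangle+\|\bxi\|^2$ gives
\[
C_p(\bw)-\|\bxi\|^2=n L_n(\hat\bbf_{\bw},\bbf)-2Z(\bw),
\]
where
\[
Z(\bw)\triangleq\langle\bxi,\bA_{\bw}\bbf+\bbb_{\bw}-\bbf\rangle+\bigl(\bxi^\top\bA_{\bw}\bxi-\sigma^2\tr(\bA_{\bw})\bigr).
\]
$Z$ is linear in $\bw$: a linear Gaussian/sub-Gaussian part plus a centered quadratic chaos. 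Because of the linear parametrization, the loss $nL_n(\hat\bbf_{\bw},\bbf)$ is also an exact quadratic in $\bw$ (with $\bY$ fixed). So comparing $\hat\bw^{\mathrm L}$ with the realized oracle $\bw^*$ gives the exact identity
\[
nL_n(\hat\bbf_{\hat\bw},\bbf)-nL_n(\hat\bbf_{\bw^*},\bbf)\le 2Z(\hat\bw-\bw^*)-\|\hat\bbf_{\hat\bw}-\hat\bbf_{\bw^*}\|^2 .
\]
This is the linear-aggregation analogue of the strong convexity in Lemma~\ref{lem:strong_conv}, and here it holds with equality. The excess loss is therefore at most $\sup_{\bv}\{2Z(\bv)-\|\hat\bbf_{\bv}^{\mathrm{lin}}\|^2\}$, where $\hat\bbf_{\bv}^{\mathrm{lin}}=\bA_{\bv}\bY+\bbb_{\bv}$ ranges over the $D$-dimensional space of differences.

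The key step is to compare the random seminorm $\|\bA_{\bv}\bY+\bbb_{\bv}\|$ with the quantity controlling $Z(\bv)$. First I would bound $Z$. The linear part satisfies $\langle\bxi,\bu\rangle\lesssim\sigma\|\bu\|\sqrt{D+x}$ uniformly over the $D$-dimensional image. The chaos part, by Hanson--Wright plus a net over the $D$-dimensional unit sphere of $\mathcal A(\mathcal M)$ in Frobenius norm, satisfies $|\bxi^\top\bA\bxi-\sigma^2\tr\bA|\lesssim\sigma^2(\|\bA\|_{\F}\sqrt{D+x}+\|\bA\|_{\op}(D+x))$. Second, I would lower bound the seminorm. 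Here the stable rank enters: $\mathbb E\|\bA\bxi\|^2=\sigma^2\|\bA\|_{\F}^2$, and by Hanson--Wright applied to $\bA^\top\bA$, the deviation $\|\bA\bxi\|^2-\sigma^2\|\bA\|_{\F}^2$ is at most of order $\sigma^2(\|\bA\|_{\op}\|\bA\|_{\F}t^{1/2}+\|\bA\|_{\op}^2t)$. With $t\asymp s$, it is bounded by $\frac12\sigma^2\|\bA\|_{\F}^2$ with probability $1-e^{-cs}$ when $\mathrm{sr}(\bA)\ge s$. A union bound over a $D$-dimensional net costs $e^{CD}$; this produces the term $2\exp\{C_\kappa D-c_\kappa s\}$. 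On this event, $\|\bA_{\bv}\bY+\bbb_{\bv}\|^2\gtrsim\sigma^2\|\bA_{\bv}\|_{\F}^2$ after controlling the cross term with the deterministic part $\bA_{\bv}\bbf+\bbb_{\bv}$. A cleaner route is to lower bound by $\|\bA_{\bv}\bbf+\bbb_{\bv}+\bA_{\bv}\bxi\|^2$, using near-orthogonality of $\bA_{\bv}\bxi$ to a $D$-dimensional deterministic subspace.

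Combining the two bounds, I would maximize $2\sigma\|\bu\|\sqrt{D+x}+2\sigma^2\|\bA\|_{\F}\sqrt{D+x}+2\sigma^2\|\bA\|_{\op}(D+x)-c(\|\bu\|^2+\sigma^2\|\bA\|_{\F}^2)$ over $\bu$ and $\bA$. Using $\|\bA\|_{\op}\le\|\bA\|_{\F}/\sqrt s$, the maximum is of order $\sigma^2\{D+x+(D+x)^2/s\}$. Dividing by $n$ gives the first branch of (\ref{eq:stable-upper-prob}). For the $F^2$ branch, I would use the projection $\Pi_{F\sqrt n}$. Since $\bbf$ lies in the ball, projection onto a convex set is nonexpansive, so $L_n(\hat\bbf^{\mathrm{TCp}},\bbf)\le L_n(\hat\bbf^{\mathrm{Cp}},\bbf)$, and trivially $L_n(\hat\bbf^{\mathrm{TCp}},\bbf)\le 4F^2$. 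Hence the excess loss is at most $\min\{4F^2,\text{previous bound}\}$, since the oracle loss is nonnegative. This also covers the case where the minimizer does not exist, because then the $C_p$ quadratic is unbounded below, which cannot happen on the good event. The simplified bound (\ref{eq:stable-upper-prob-simplified}) follows directly from the first display when $s\ge C(D+x)$.

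For the risk bound (\ref{eq:stable-upper-risk}), I would integrate the tail over $x$ up to $x_{\max}\asymp\log(e+nF^2/(\sigma^2D))$. Beyond that level, and on the bad event of probability $e^{CD-cs}$, I would use the crude bound $4F^2$. The condition on $s$ makes $F^2e^{CD-cs}\lesssim\sigma^2D/n$. Finally, I would pass from the realized oracle loss to the oracle risk using $\mathbb E\inf_{\bw}L_n\le\inf_{\bw}R_n$. I expect the main obstacle to be the uniform lower bound on the random seminorm: it needs a uniform-over-subspace Hanson--Wright bound for $\|\bA_{\bv}\bxi\|^2$ together with control of the cross term against $\bA_{\bv}\bbf+\bbb_{\bv}$. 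This must work without bounds on $\|\bbf\|$ relative to $\sigma$, so I would first try a net argument on the normalized sphere $\{\|\bA_{\bv}\|_{\F}^2+\|\bA_{\bv}\bbf+\bbb_{\bv}\|^2/\sigma^2=1\}$.
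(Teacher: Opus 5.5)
Your proposal is correct and is essentially the paper's own proof. In basis coordinates for $\mathcal V(\mathcal M)$, your quantity $\sup_{\bv}\{2Z(\bv)-\|\bA_{\bv}\bY+\bbb_{\bv}\|^2\}$ is exactly the paper's $\bq^\top\bG^{-1}\bq$, and the paper controls it as you plan: a net bound on the score normalized by $\rho_{\bbf}(\bv)^2=\|\bA_{\bv}\bbf+\bbb_{\bv}\|^2+\sigma^2\|\bA_{\bv}\|_{\F}^2$, a two-sided Gram comparison $\|\bH_{\bbf}^{-1/2}\bG\bH_{\bbf}^{-1/2}-\bI_D\|_{\op}\le 1/2$ on that same normalized sphere (the source of $2\exp\{C_\kappa D-c_\kappa s\}$), and then the truncation cap $4F^2$. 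Make sure the Gram lower bound is by the full $\rho_{\bbf}(\bv)^2$, not only $\sigma^2\|\bA_{\bv}\|_{\F}^2$ as in your first sketch, because the linear term $\sigma\|\bA_{\bv}\bbf+\bbb_{\bv}\|\sqrt{D+x}$ needs $\|\bA_{\bv}\bbf+\bbb_{\bv}\|^2$ to absorb it; your final net on the normalized sphere gives exactly this.
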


Combining Theorems~\ref{theo:diffuse-linear-lower} and
\ref{theo:stable-upper} shows that, under $s \geq D$ (up to a logarithmic term in $n$), the minimax excess-risk rate over
$\mathbb M_{M,D,A,B}^{s}$ is
$$
  F^2\wedge\frac{\sigma^2D}{n}.
$$
When $D=M\leq n$ and $F$ and $\sigma$ are fixed, this reduces to the
classical linear-aggregation rate $\sigma^2M/n$ discussed in Section~\ref{subsec:linear_review}. Therefore, this section establishes that, over the family $\mathbb M_{M,D,A,B}^{s}$ of affine-estimator collections with $D=M\leq n$, the minimax rate for linear aggregation matches the optimal rate $\psi_{n,M}^{\mathrm L}$ for the deterministic-candidate setting.

\section{Discussion}\label{sec:disc}

Aggregating multiple estimation procedures to form a final decision is a fundamental strategy in econometrics, statistics, and machine learning. Earlier studies on the aggregation of affine estimators, including \cite{Dalalyan2012,Dai2012Deviation,Bellec2018}, have primarily focused on model-selection aggregation, whose objective is to perform nearly as well as the best individual affine estimator. By contrast, the corresponding problems of convex and linear aggregation have remained largely unresolved. This paper fills this theoretical gap by establishing the minimax rates and constructing minimax-optimal procedures for convex aggregation. We further develop matching lower and upper bounds for linear aggregation under suitable structural conditions on the candidate collection.

Our results also provide novel non-asymptotic statistical foundation for Mallows model averaging, which has been widely studied and applied in econometrics \citep{Hansen2007least,WAN2010277,Zhang_2021}. In particular, we show that minimizing Mallows' $C_p$ criterion is not merely asymptotically optimal relative to an infeasible optimal loss, but can attain the minimax aggregation rate in both probability and expectation. These findings support the use of Mallows model averaging in practical problems. 

The present theory relies essentially on the affine structure of the candidate estimators. For general nonlinear estimators constructed from the same data used for aggregation, the dependence between the candidates and the aggregation criterion may generate substantially more complicated stochastic terms. Without additional restrictions on the stability or complexity of the candidate procedures, a universal minimax rate depending only on $n$ and $M$ should not generally be expected. When such nonlinear dependence cannot be controlled directly, sample splitting remains a broadly applicable alternative, although it may reduce statistical efficiency by decreasing the effective sample size available for estimation and aggregation. 

An important direction for future research is to develop a more general theory of linear aggregation and under other weight constraints. The minimum-stable-rank condition used in this paper provides a sufficient mechanism for controlling unrestricted linear combinations, but it is not known whether this condition is necessary or can be substantially weakened. Determining the minimax rate over broader families of affine estimators, and constructing linear aggregation procedures that attain it under minimal structural assumptions, remain open problems.



\appendix

\section*{Appendix}
\addcontentsline{toc}{section}{Appendix}
\renewcommand{\thesection}{\Alph{section}}
\renewcommand{\thesubsection}{\thesection.\arabic{subsection}}
\renewcommand{\thesubsubsection}{\thesubsection.\arabic{subsubsection}}
\renewcommand{\theparagraph}{\thesubsubsection.\arabic{paragraph}}
\numberwithin{equation}{section}
\renewcommand{\theequation}{\thesection.\arabic{equation}}

\section{Proof of Theorem~\ref{theo:lower-zero-intercept}}

The following lemma provides the geometric ingredients required for the proof of Theorem~\ref{theo:lower-zero-intercept}. Its construction follows the two-regime argument used to establish the convex aggregation lower bound in \citet{Tsybakov2003}. 

\begin{lemma}[Convex packing in a Euclidean space]
\label{lem:convex-packing}
Let $\mathcal H$ be a Euclidean space of dimension $n-1$, let $0 < \kappa \le 1$ and $0 < \lambda \le 1/2$, and set
$
  \tau=\kappa\sigma\sqrt{\frac{3n}{4}},
  a=\lambda\tau.
$
Under the range of $(n,M)$ in Theorem~\ref{theo:lower-zero-intercept}, there exist vectors $\bg_1,\ldots,\bg_M\in\mathcal H$, with $\max_j\|\bg_j\|\le a$, and a finite set
$
  \Theta\subseteq\operatorname{conv}\{\bg_1,\ldots,\bg_M\}
$
such that, for every estimator $T$ based on
$$
  \bZ=\boldsymbol\theta+\boldsymbol\varepsilon,
  \qquad
  \boldsymbol\varepsilon\sim N(\boldsymbol{0},\sigma^2\bI_{\mathcal H}),
$$
where $\bI_{\mathcal{H}}$ denotes the identity operator on the $(n-1)$-dimensional subspace $\mathcal{H}$, 
we have
\begin{equation}\label{eq:packing-testing}
  \frac1{|\Theta|}
  \sum_{\boldsymbol\theta\in\Theta}
  \mathbb P_{\boldsymbol\theta}\!\left[
    \frac1n\|T-\boldsymbol\theta\|^2
    \ge c_*\kappa^2\lambda^2\sigma^2\psi_{n,M}^{\mathrm C}
  \right]
  \ge p_*,
\end{equation}
where $c_*,p_*>0$ are universal constants.
\end{lemma}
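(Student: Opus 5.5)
The plan is to follow Tsybakov's (2003) two-regime construction inside the $(n-1)$-dimensional space $\mathcal H$. We combine a Varshamov--Gilbert packing with Fano's lemma in its averaged form, so that the bound holds for the uniform average over $\Theta$ rather than only for the maximum. Throughout, fix an orthonormal basis $\be_1,\ldots,\be_{n-1}$ of $\mathcal H$. The two regimes are treated separately.

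\emph{Small-$M$ regime, $M\le\sqrt n$.} Take $\bg_j=a\,\be_j$ for $j\le M$ (possible since $M\le\sqrt n\le n-1$), so that $\max_j\|\bg_j\|=a$. Fix $\gamma\asymp \sigma/a$ with a small constant. By Varshamov--Gilbert, there exists $\Omega\subseteq\{0,1\}^{M-1}$ with $|\Omega|\ge 2^{(M-1)/8}$ and pairwise Hamming distance at least $(M-1)/8$. For each $\omega\in\Omega$ set
\[
\boldsymbol\theta_\omega=\sum_{j<M}\gamma\omega_j\bg_j+\Bigl(1-\gamma\sum_{j<M}\omega_j\Bigr)\bg_M .
\]
This lies in the convex hull provided $\gamma(M-1)\le1$, i.e.\ $M\lesssim a/\sigma\asymp\lambda\kappa\sqrt n$; we absorb this restriction by adjusting constants and $M_0$, and by reducing $M$ to $\min(M,c\sqrt n)$ when needed. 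For $\omega\neq\omega'$ the separation satisfies
\[
\|\boldsymbol\theta_\omega-\boldsymbol\theta_{\omega'}\|^2\ge \gamma^2a^2\,d_H(\omega,\omega')\gtrsim \sigma^2 M ,
\]
while the Kullback--Leibler divergences are at most $\gamma^2a^2M/(2\sigma^2)$, which is a small multiple of $\log|\Omega|$. Averaged Fano then gives the bound with $\sigma^2M/n$ and a constant absorbing $\kappa^2\lambda^2$. To obtain the stated dependence on $\kappa^2\lambda^2$ exactly, it is cleaner to take $\gamma$ so that $\gamma a=c\sigma$. The scaling $a=\lambda\kappa\sigma\sqrt{3n/4}$ is what guarantees $\gamma\le 1/M$ when $M\le \sqrt n$, up to a factor $\kappa\lambda$. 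In the end the separation is of order $\kappa^2\lambda^2\sigma^2M$, which is why that factor appears.

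\emph{Large-$M$ regime, $M>\sqrt n$.} Use $m$-sparse averages. Take $\bg_j=a\,\bu_j$ with $\bu_j$ unit vectors forming an orthonormal set or a near-orthogonal spreading when $M>n-1$, in which case one restricts to $\min(M,n-1)$ of them. Set $\boldsymbol\theta_S=m^{-1}\sum_{j\in S}\bg_j$ over subsets $S$ of size $m$, chosen from a Varshamov--Gilbert-type family with $\log|\Theta|\gtrsim m\log(eM/m)$ and pairwise symmetric difference at least $m/2$. This gives separation $\|\boldsymbol\theta_S-\boldsymbol\theta_{S'}\|^2\gtrsim a^2/m$ and KL divergence at most $2a^2/(m\sigma^2)$. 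Choose $m\asymp \sqrt{n/\log(1+M/\sqrt n)}$, up to the constants $\kappa\lambda$, so that the KL is a small fraction of $m\log(eM/m)$. The separation divided by $n$ is then of order
\[
\frac{a^2}{mn}\asymp \kappa^2\lambda^2\sigma^2\sqrt{\frac{\log(1+M/\sqrt n)}{n}} .
\]
The assumption $\log M\le c_0n$ ensures $m\ge1$, and $M\ge M_0$ ensures $m\le M/2$.

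\emph{Main obstacle.} The main difficulty is the bookkeeping of the large-$M$ case when $M\gg n$. Orthogonality is unavailable there, so the $\bg_j$ must be random or Gilbert-type near-orthogonal vectors in dimension $n-1$ with $|\langle\bu_j,\bu_k\rangle|\lesssim\sqrt{\log M/n}$, as in Tsybakov's construction. The pairwise separation of sparse averages must then be checked to survive these cross terms, which requires $m\sqrt{\log M/n}$ to be small. The choice of $m$ gives exactly this up to constants, and it is the place where $\log M\le c_0 n$ is used. The final step is to convert the averaged Fano inequality (via $\min_T \frac{1}{|\Theta|}\sum \mathbb P_{\boldsymbol\theta}[T\neq\boldsymbol\theta]\ge p_*$ for the nearest-point test) into \eqref{eq:packing-testing} by the usual triangle-inequality reduction.
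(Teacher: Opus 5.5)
Your two-regime plan (a packing plus Fano in each regime) is the same as the paper's. Your large-$M$ dictionary, with orthonormal atoms for $M\le n-1$ and incoherent unit vectors for $M\ge n$, is a legitimate and more elementary substitute for the RIP vectors of Baraniuk et al.\ that the paper uses. However, one computation in each regime fails as written, and each needs the same missing ingredient: an amplitude set independently of the combinatorics, with slack mass on a zero atom.

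\emph{Small $M$.} Your bound $\mathrm{KL}\le\gamma^2a^2M/(2\sigma^2)$ ignores the anchor atom. Because $\bg_M=a\be_M$ carries weight $1-\gamma|\boldsymbol\omega|$, where $|\boldsymbol\omega|=\sum_{j<M}\omega_j$, you actually have $\|\boldsymbol\theta_{\boldsymbol\omega}-\boldsymbol\theta_{\boldsymbol\omega'}\|^2=\gamma^2a^2\{d_{\mathrm H}(\boldsymbol\omega,\boldsymbol\omega')+(|\boldsymbol\omega|-|\boldsymbol\omega'|)^2\}$. The weights in a Varshamov--Gilbert packing spread over a range of order $M$; Tsybakov's version even contains $\boldsymbol 0$. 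So the second term is of order $\gamma^2a^2M^2$, and the KL is no longer a small multiple of $\log|\Omega|\asymp M$. Take $\bg_M=\boldsymbol 0$, which is what the zero atoms in the paper's dictionary are for, or restrict to a constant-weight packing.

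Your amplitude choice is also inconsistent. The choice $\gamma a=c\sigma$ violates $\gamma(M-1)\le1$ when $\kappa\lambda$ is small, and then the separation carries no $\kappa^2\lambda^2$ factor. The consistent choice is $\gamma=c/\sqrt n$, i.e.\ $\gamma a\asymp\kappa\lambda\sigma$, which is the paper's $\delta=\gamma\kappa\lambda\sigma$. With it the convex-hull constraint holds automatically for $M\le\sqrt n$, the separation is $\asymp\kappa^2\lambda^2\sigma^2M$, and no truncation of $M$ is needed.

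\emph{Large $M$.} Let $\ell_M=\log(eM/\sqrt n)$. With atoms of norm $a$ and weight exactly $1/m$, the single parameter $m$ controls both the amplitude and the sparsity. The KL condition $\tfrac34\kappa^2\lambda^2n/m\lesssim\alpha\log|\Theta|\asymp\alpha m\ell_M$ forces $m\ge C\kappa\lambda\sqrt{n/\ell_M}$, where $C$ is bounded below by the Fano and Varshamov--Gilbert constants. Two upper constraints on $m$ conflict with this when $\kappa\lambda$ is of order one:
\begin{itemize}
\item The constant-weight packing needs $m\le M/4$. This fails when $M/\sqrt n$ is close to $1$; your appeal to $M\ge M_0$ does not help, since what matters is $M/\sqrt n$.
\item For incoherent atoms with $\mu\asymp\sqrt{\log M/n}$, Gershgorin needs $(2m-1)\mu\le1/2$, i.e.\ $m\le c\sqrt{n/\log M}$ with $c$ small.
\end{itemize}
So the claim ``the choice of $m$ gives exactly this up to constants'' does not hold; the constants point the wrong way.

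The fix is the paper's second parameter. Set $\boldsymbol\theta_S=(\eta/m)\sum_{j\in S}\bg_j$ and put weight $1-\eta$ on a zero atom. Choose $m=c_k\sqrt{n/\ell_M}$ with $c_k$ small to meet both upper constraints, then choose $\eta$ small to meet the KL constraint. The separation $\eta^2a^2/(mn)$ is still $\asymp\kappa^2\lambda^2\sigma^2\sqrt{\ell_M/n}$.

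Two further corrections:
\begin{itemize}
\item Drop the fallback of restricting to $\min(M,n-1)$ atoms. It yields only order $\sqrt{\log n/n}$, which is too small when $\log M\gg\log n$.
\item State explicitly why incoherence suffices for $M\ge n$. There $\ell_M=1+\log M-\tfrac12\log n\ge\tfrac12\log M$, so the required sparsity $\sqrt{n/\ell_M}$ lies within the incoherence range $\sqrt{n/\log M}$. This fails for $\sqrt n<M<n$, which is exactly why orthonormal atoms are needed there. The paper avoids the case split because RIP of order $2k$ only requires $n-1\ge Ck\log(ep/k)$.
\end{itemize}
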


\begin{proof}[Proof of Lemma~\ref{lem:convex-packing}]

We give the construction in the two regimes.

\noindent\textsc{\underline{Case 1: \(M\le\sqrt n\).}} Let $d=\lfloor\frac{M-1}{2}\rfloor$ and choose orthonormal vectors $\bv_1,\ldots,\bv_d\in\mathcal H$. Consider the dictionary containing $M$ vectors: \begin{equation}\label{eq:dic_1}\{ \boldsymbol{0}, \ldots, \boldsymbol{0},
  a\bv_1,-a\bv_1,\ldots,a\bv_d,-a\bv_d\},\end{equation} where $a=\lambda\tau$, $\tau=\kappa\sigma\sqrt{\frac{3n}{4}}$, $0 < \kappa \le 1$, $0 < \lambda \le 1/2$, and repeated zero vectors $\boldsymbol{0}, \ldots, \boldsymbol{0}$ are added into the dictionary whenever $2d+1 < M$. Let $\delta=\gamma\kappa\lambda\sigma$, where $\gamma>0$ is a sufficiently
small universal constant, and define
\[
  \boldsymbol\theta_{\boldsymbol\omega}
  =\delta\sum_{j=1}^d\omega_j\bv_j,
  \qquad
  \boldsymbol\omega\in\{-1,1\}^d.
\]
Because
$
  \frac{d\delta}{a}
  =\frac{d\gamma}{\sqrt{3n/4}}
  \le1
$
for sufficiently small \(\gamma\), every
\(\boldsymbol\theta_{\boldsymbol\omega}\) belongs to the convex hull of
the displayed dictionary in (\ref{eq:dic_1}). Specifically, we observe that \(\omega_j a\bv_j\) is an element of (\ref{eq:dic_1}) for any $\omega_j \in \{-1,1\}$. Since $\frac{d\delta}{a} \le1$, we can put weight \(\delta/a\) on
\(\omega_j a\bv_j\) for each \(j\), and put the remaining weight on \(\boldsymbol{0}\)'s. 

By the Varshamov--Gilbert lemma \citep[see, e.g., Lemma 2.9 of][]{Tsybakov2009}, there is a subset
\(\Omega\subseteq\{-1,1\}^d\) such that
$
  \log|\Omega|\ge c d$ and $ 
  d_{\mathrm H}(\boldsymbol\omega,\boldsymbol\omega')\ge c d
$
for distinct elements of \(\Omega\). Consequently,
\begin{equation}\label{eq:small-separation}
  c\delta^2d
  \le
  \|\boldsymbol\theta_{\boldsymbol\omega}
       -\boldsymbol\theta_{\boldsymbol\omega'}\|^2
  \le4\delta^2d.
\end{equation}
The Kullback-Leibler divergence between the corresponding Gaussian distributions is
$
  \operatorname{KL}
  (P_{\boldsymbol\theta_{\boldsymbol\omega}},
   P_{\boldsymbol\theta_{\boldsymbol\omega'}})
  =\frac{
    \|\boldsymbol\theta_{\boldsymbol\omega}
      -\boldsymbol\theta_{\boldsymbol\omega'}\|^2
  }{2\sigma^2}
  \le2\gamma^2d.
$
Choose \(\gamma\) small enough that $2\gamma^2d$ is at most a fixed small
multiple of \(\log|\Omega|\). Theorem 2.5 of \cite{Tsybakov2009} and
\eqref{eq:small-separation} then give
\[
  \inf_T\frac1{|\Omega|}
  \sum_{\boldsymbol\omega\in\Omega}
  \mathbb P_{\boldsymbol\theta_{\boldsymbol\omega}}
  \!\left[
    \frac1n\|T-\boldsymbol\theta_{\boldsymbol\omega}\|^2
    \ge c\kappa^2\lambda^2\sigma^2\frac{M}{n}
  \right]
  \ge p_*.
\]

\noindent\textsc{\underline{Case 2: \(M>\sqrt n\).}} Put \(p=M-1\) and
$
  \ell_M=\log\left(\frac{eM}{\sqrt n}\right).
$
For a sufficiently small universal constant \(c_k>0\), choose an integer
$
  k\asymp\sqrt{\frac{n}{\ell_M}}$ and 
  $1\le k\le p/4,
$
so that
\begin{equation}\label{eq:k-balance}
  k^2\log(ep/k)\asymp n.
\end{equation}
The restriction \(\log M\le c_0n\), with \(c_0\) sufficiently small,
ensures that \(k\) can be chosen so that
$
  n-1\ge Ck\log(ep/k).
$
By \citet{Baraniuk2008}
(see also the construction in
\citealp[Section~5.2]{Rigollet2011}),
there exist vectors
$\bv_1,\ldots,\bv_p\in\mathcal H$ satisfying
$\|\bv_j\|\leq1$ and
\begin{equation}\label{eq:rip}
  \frac14\|\bu\|^2
\leq
\left\|\sum_{j=1}^p u_j\bv_j\right\|^2
\leq
2\|\bu\|^2
\end{equation}
for every $2k$-sparse $\bu\in\R^p$, provided that
$n-1\geq Ck\log(ep/k)$. 

Take
$
  \bg_1=\boldsymbol{0}, 
  \bg_{j+1}=a\bv_j,
   1\leq j\leq p.
$
A constant-weight Varshamov--Gilbert argument gives a collection
\(\mathfrak S\) of \(k\)-element subsets of \(\{1,\ldots,p\}\) such that
\begin{equation}\label{eq:set-packing}
  |S\triangle S'|
  \geq \frac{k}{2}
  \quad\text{for all distinct }S,S'\in\mathfrak S,
  \qquad
  \log|\mathfrak S|
  \geq c k\log\left(\frac{ep}{k}\right).
\end{equation}
For a sufficiently small universal constant \(\eta\in(0,1)\), define
\begin{equation}\label{eq:large-theta}
  \boldsymbol\theta_S
  =\frac{\eta}{k}\sum_{j\in S}\bg_{j+1},
  \qquad S\in\mathfrak S.
\end{equation}
This is a convex combination: its selected atoms receive weight
\(\eta/k\), and \(\bg_1=\boldsymbol{0}\) receives weight \(1-\eta\).
Equations \eqref{eq:rip}--\eqref{eq:set-packing} imply that, for
distinct \(S,S'\),
\begin{equation}\label{eq:large-separation}
  \frac{\eta^2a^2}{8k}
  \le
  \|\boldsymbol\theta_S-\boldsymbol\theta_{S'}\|^2
  \le
  \frac{4\eta^2a^2}{k}.
\end{equation}
It follows that
$
  \operatorname{KL}(P_{\boldsymbol\theta_S},
                    P_{\boldsymbol\theta_{S'}})
  \le
  \frac{2\eta^2a^2}{\sigma^2k}
  \le C\eta^2\frac nk.
$
On the other hand, \eqref{eq:k-balance} and
\eqref{eq:set-packing} give
$
  \log|\mathfrak S|
  \ge c k\log(ep/k)
  \ge c'\frac nk.
$
Choosing \(\eta\) sufficiently small makes the Kullback--Leibler
divergence a small fixed multiple of \(\log|\mathfrak S|\).  Fano's
lemma, \eqref{eq:large-separation}, and
\(k^{-1}\asymp\sqrt{\ell_M/n}\) yield
\[
  \inf_T\frac1{|\mathfrak S|}
  \sum_{S\in\mathfrak S}
  \mathbb P_{\boldsymbol\theta_S}\!\left[
    \frac1n\|T-\boldsymbol\theta_S\|^2
    \ge
    c\kappa^2\lambda^2\sigma^2
    \sqrt{\frac{\ell_M}{n}}
  \right]
  \ge p_*.
\]
Combining the two cases proves the lemma.
\end{proof}

\begin{proof}[Proof of Theorem~\ref{theo:lower-zero-intercept}]

\noindent\textsc{\underline{Step 1: Construction of candidate matrices.}}
Fix an arbitrary deterministic unit vector
\(\be_0\in\R^n\), and let
$
  \mathcal H=\be_0^\perp.
$
Under the Gaussian model, one may equivalently take
\(\be_0=(1,0,\ldots,0)^\top\).  Define \(\kappa_F=\min\{1,F/\sigma\}\) and
\begin{equation}\label{eq:correct-lambda}
  \lambda_A
  =\frac12\min\{1,\sqrt{A^2-1}\},
  \qquad
  \tau=\kappa_F\sigma\sqrt{\frac{3n}{4}},
  \qquad
  a=\lambda_A\tau.
\end{equation}
Apply Lemma~\ref{lem:convex-packing} in \(\mathcal H\) with
\(\kappa=\kappa_F\) and \(\lambda=\lambda_A\), obtaining
\(\bg_1,\ldots,\bg_M\) and \(\Theta\).  For \(1\le j\le M\), set
\begin{equation}\label{eq:pilot-matrix}
  \bA_j
  =\left(\be_0+\frac{\bg_j}{\tau}\right)\be_0^\top,
  \qquad
  \bbb_j=\boldsymbol{0}.
\end{equation}
These are deterministic rank-one matrices.  Because
\(\bg_j\perp\be_0\),
\begin{align*}
  \|\bA_j\|_{\op}
  &=\left\|\be_0+\frac{\bg_j}{\tau}\right\|=\sqrt{1+\frac{\|\bg_j\|^2}{\tau^2}}
  \le\sqrt{1+\lambda_A^2}
  \le A.
\end{align*}
Hence the resulting collection belongs to
\(\mathbb M_{M,A}^{(0)}\).

\medskip
\noindent\textsc{\underline{Step 2: Hard regression means.}}
For each \(\boldsymbol\theta\in\Theta\), define
\begin{equation}\label{eq:hard-mean}
  \bbf_{\boldsymbol\theta}
  =\tau\be_0+\boldsymbol\theta.
\end{equation}
Since \(\boldsymbol\theta\in\operatorname{conv}
\{\bg_1,\ldots,\bg_M\}\), we have
\(\|\boldsymbol\theta\|\le a\).  Therefore,
\[
  \|\bbf_{\boldsymbol\theta}\|_n^2
  =\frac{\tau^2+\|\boldsymbol\theta\|^2}{n}
  \le\frac{\tau^2+a^2}{n}
  =\frac34\kappa_F^2(1+\lambda_A^2)\sigma^2
  \le F^2.
\]
Thus every hard mean belongs to \(\mathcal F_n(F)\). The observation $\bY = \bbf_{\boldsymbol\theta} + \bxi$ decomposes orthogonally as
\begin{equation}\label{eq:orthogonal-data}
  Y_0=\be_0^\top\bY=\tau+\xi_0,
  \qquad
  \bY_{\mathcal H}=\bP_{\mathcal H}\bY
  =\boldsymbol\theta+\bxi_{\mathcal H}.
\end{equation}
We refer to \(Y_0=\be_0^\top\bY\) as the \emph{pilot coordinate}. It provides the common random scaling factor used in constructing the affine estimators by (\ref{eq:pilot-matrix}), whereas \(\bY_{\mathcal H}\) contains the information needed to identify \(\boldsymbol\theta\). Specifically, we have 
$
\xi_0\sim N(0,\sigma^2),
\bxi_{\mathcal H}\sim
 N(\boldsymbol{0},\sigma^2\bI_{\mathcal H}),
$
and \(\xi_0\) and \(\bxi_{\mathcal H}\) are independent. Moreover, the distribution of \(Y_0\) is \(N(\tau,\sigma^2)\) for every \(\boldsymbol\theta\in\Theta\). Hence, \(Y_0\) provides no information about which element of \(\Theta\) generated the data.

\medskip
\noindent\textsc{\underline{Step 3: The convex oracle has only pilot noise.}}
Fix \(\boldsymbol\theta\in\Theta\).  Select a deterministic
\(\bw_{\boldsymbol\theta}\in \mathcal{W}_{\mathrm{C}}\) such that
$
  \boldsymbol\theta
  =\sum_{j=1}^M w_{\boldsymbol\theta,j}\bg_j.
$
Using \eqref{eq:pilot-matrix} and \eqref{eq:hard-mean},
$
  \bA_{\bw_{\boldsymbol\theta}}
  =\frac{\bbf_{\boldsymbol\theta}}{\tau}\be_0^\top,\,
  \bA_{\bw_{\boldsymbol\theta}}\bY
  =\frac{Y_0}{\tau}\bbf_{\boldsymbol\theta}.
$
Consequently,
\begin{equation}\label{eq:oracle-realized}
  \inf_{\bw\in\mathcal{W}_{\mathrm{C}}}
  L_n(\hat{\bbf}_{\bw|\mathcal M},\bbf_{\boldsymbol\theta})
  \le
  \frac{\xi_0^2}{\tau^2}
  \|\bbf_{\boldsymbol\theta}\|_n^2
  \le
  (1+\lambda_A^2)\frac{\xi_0^2}{n}.
\end{equation}
Taking expectations also gives
\begin{equation}\label{eq:oracle-risk}
  \inf_{\bw\in\mathcal{W}_{\mathrm{C}}}
  R_n(\hat{\bbf}_{\bw|\mathcal M},\bbf_{\boldsymbol\theta})
  \le
  (1+\lambda_A^2)\frac{\sigma^2}{n}.
\end{equation}

\medskip
\noindent\textsc{\underline{Step 4: Probability lower bound.}}
Let \(\tilde{\bbf}=\tilde{\bbf}(\bY)\) be an arbitrary estimator
and put
$
  \tilde{\boldsymbol\theta}
  =\bP_{\mathcal H}\tilde{\bbf}.
$
For every fixed value \(Y_0=y_0\), the map
\(\bY_{\mathcal H}\mapsto
\tilde{\boldsymbol\theta}(y_0,\bY_{\mathcal H})\) is an estimator in
the Gaussian experiment of Lemma~\ref{lem:convex-packing}.  Since the
conditional distribution of \(\bY_{\mathcal H}\) does not depend on
\(y_0\), \eqref{eq:packing-testing} gives, uniformly in \(y_0\),
\begin{equation}\label{eq:conditional-fano}
  \frac1{|\Theta|}
  \sum_{\boldsymbol\theta\in\Theta}
  \mathbb P_{\boldsymbol\theta}\!\left[
    \frac1n
    \|\tilde{\boldsymbol\theta}
      -\boldsymbol\theta\|^2
    \ge r_{n,M}
    \ \middle|\ Y_0=y_0
  \right]
  \ge p_*,
\end{equation}
where
$
  r_{n,M}=c_*\kappa_F^2\lambda_A^2\sigma^2\psi_{n,M}^{\mathrm C}.
$

Choose a constant \(q_{F,A}>0\), depending only on \(F/\sigma\) and \(A\), so small that
\begin{equation}\label{eq:q-choice}
  (1+\lambda_A^2)q_{F,A}^2\frac{\sigma^2}{n}
  \le\frac12r_{n,M}
\end{equation}
for all admissible \((n,M)\).  This is possible because
\(n\psi_{n,M}^{\mathrm C}\ge1\).  Let
$
  \mathcal E_0=\{|\xi_0|\le q_{F,A}\sigma\}.
$
Its probability
\(q_0(F,A)=\mathbb P(|N(0,1)|\le q_{F,A})\) is strictly positive and does not
depend on \(n\), \(M\), or \(\boldsymbol\theta\). Integrating \eqref{eq:conditional-fano} only over
\(\mathcal E_0\) yields
\begin{equation}\label{eq:fano-and-pilot}
  \frac1{|\Theta|}
  \sum_{\boldsymbol\theta\in\Theta}
  \mathbb P_{\boldsymbol\theta}\!\left[
    \frac1n
    \|\tilde{\boldsymbol\theta}
      -\boldsymbol\theta\|^2
    \ge r_{n,M},\ \mathcal E_0
  \right]
  \ge p_*q_0(F,A).
\end{equation}
On the event inside \eqref{eq:fano-and-pilot}, orthogonal projection and
\eqref{eq:oracle-realized}--\eqref{eq:q-choice} imply
\begin{align*}
  &L_n(\tilde{\bbf},\bbf_{\boldsymbol\theta})
   -\inf_{\bw\in \mathcal{W}_{\mathrm{C}}}
      L_n(\hat{\bbf}_{\bw|\mathcal M},\bbf_{\boldsymbol\theta})\ge
  \frac1n
  \|\tilde{\boldsymbol\theta}
     -\boldsymbol\theta\|^2
  -(1+\lambda_A^2)\frac{\xi_0^2}{n}
  \ge\frac12r_{n,M}.
\end{align*}
Therefore, for every \(\tilde{\bbf}\),
\[
  \sup_{\bbf\in\mathcal F_n(F)}
  \mathbb P_{\bbf}\!\left[
    L_n(\tilde{\bbf},\bbf)
    -\inf_{\bw\in \mathcal{W}_{\mathrm{C}}}
      L_n(\hat{\bbf}_{\bw|\mathcal M},\bbf)
    \ge\frac{c_*\kappa_F^2\lambda_A^2}{2}
           \sigma^2\psi_{n,M}^{\mathrm C}
  \right]
  \ge p_*q_0(F,A).
\]
Taking the infimum over all estimators proves
\eqref{eq:probability-lower} with \(c_1=c_*\kappa_F^2\lambda_A^2/2\) and \(c_2=p_*q_0(F,A)\).

\medskip
\noindent\textsc{\underline{Step 5: Risk lower bound.}}
Integrating \eqref{eq:packing-testing} gives, for every
\(\tilde{\bbf}\),
\begin{equation}\label{eq:expected-fano}
  \frac1{|\Theta|}
  \sum_{\boldsymbol\theta\in\Theta}
  R_n(\tilde{\bbf},\bbf_{\boldsymbol\theta})
  \ge
  c_*p_*\kappa_F^2\lambda_A^2\sigma^2
  \psi_{n,M}^{\mathrm C}.
\end{equation}
Combining \eqref{eq:expected-fano} with \eqref{eq:oracle-risk},
\begin{align*}
  &\frac1{|\Theta|}
  \sum_{\boldsymbol\theta\in\Theta}
  \left\{
    R_n(\tilde{\bbf},\bbf_{\boldsymbol\theta})
    -\inf_{\bw\in\mathcal W_{\mathrm C}}
      R_n(\hat{\bbf}_{\bw|\mathcal M},\bbf_{\boldsymbol\theta})
  \right\}\ge
  c_*p_*\kappa_F^2\lambda_A^2\sigma^2
  \psi_{n,M}^{\mathrm C}
  -(1+\lambda_A^2)\frac{\sigma^2}{n}.
\end{align*}
Since \(n\psi_{n,M}^{\mathrm C}\) can be made sufficiently large uniformly over the stated range by choosing
\(n_0(F,A)\) and \(M_0(F,A)\) sufficiently large, the last term is absorbed into the first.  The supremum is at least the average over
the finite set \(\{\bbf_{\boldsymbol\theta}:\boldsymbol\theta\in\Theta\}\).
Taking the infimum over \(\tilde{\bbf}\) proves
\eqref{eq:risk-lower}.

\end{proof}

\section{Proof of Theorem~\ref{theo:upper_convex_1}}

\subsection{Preliminaries}\label{sec:concentration_lemmas}

\begin{lemma}[Linear and quadratic sub-Gaussian forms]
\label{lem:subgaussian}
Suppose that $\bxi = (\xi_1,\ldots, \xi_n)^{\top}$ consists of $n$ i.i.d. mean zero sub-Gaussian random variables with $\|\xi_i\|_{\psi_2}\leq \kappa\sigma$. Then, there is a universal constant \(C>0\) such that, for every \(\bz\in\R^n\), every symmetric
\(\bS\in\R^{n\times n}\), and every \(x\geq0\),
\begin{align}
\mathbb P\left(
|\bz^\top\bxi|>
C\kappa\sigma\|\bz\|\sqrt{x}
\right)
&\leq2\exp(-x),
\label{eq:sg_linear}\\
\mathbb P\left(
\left|\bxi^\top\bS\bxi-\sigma^2\tr(\bS)\right|
>
C\kappa^2\sigma^2
\{\|\bS\|_{\F}\sqrt{x}+\|\bS\|_{\op}x\}
\right)
&\leq2\exp(-x).
\label{eq:sg_quadratic}
\end{align}
Consequently, after changing \(C\),
\begin{align}
\mathbb P\Big(
\big|
\bxi^\top\bS\bxi-\sigma^2\tr(\bS)+\bz^\top\bxi
\big|
&>
C\kappa^2
\big[
\{\sigma^2\|\bS\|_{\F}+\sigma\|\bz\|\}\sqrt{x}
\nonumber\\[-1mm]
&\hspace{35mm}
+\sigma^2\|\bS\|_{\op}x
\big]
\Big)
\leq2\exp(-x).
\label{eq:sg_combined}
\end{align}
\end{lemma}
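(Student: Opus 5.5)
The two single-form bounds are standard. The plan is to quote them and obtain the combined bound by a union bound.

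For the linear form \eqref{eq:sg_linear}, independence and centering of the coordinates give, by the general Hoeffding inequality for sums of independent sub-Gaussian variables, $\|\bz^\top\bxi\|_{\psi_2}\le C_0\big(\sum_i z_i^2\|\xi_i\|_{\psi_2}^2\big)^{1/2}\le C_0\kappa\sigma\|\bz\|$. The $\psi_2$ tail bound $\mathbb P(|X|>t)\le 2\exp(-ct^2/\|X\|_{\psi_2}^2)$ with $t=C\kappa\sigma\|\bz\|\sqrt{x}$ then yields \eqref{eq:sg_linear} after enlarging $C$. The case $\bz=\boldsymbol 0$ is trivial.

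For the quadratic form \eqref{eq:sg_quadratic}, I would invoke the Hanson--Wright inequality (e.g.\ Rudelson--Vershynin). For independent mean-zero $\xi_i$ with $\|\xi_i\|_{\psi_2}\le K$, it gives
\[
\mathbb P\big(|\bxi^\top\bS\bxi-\mathbb E\bxi^\top\bS\bxi|>t\big)\le 2\exp\Big[-c\min\Big(\frac{t^2}{K^4\|\bS\|_{\F}^2},\frac{t}{K^2\|\bS\|_{\op}}\Big)\Big].
\]
Since the coordinates are i.i.d.\ with variance $\sigma^2$, we have $\mathbb E\bxi^\top\bS\bxi=\sigma^2\tr(\bS)$. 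Set $K=\kappa\sigma$ and $t=C\kappa^2\sigma^2(\|\bS\|_{\F}\sqrt{x}+\|\bS\|_{\op}x)$. Then $t^2/(K^4\|\bS\|_{\F}^2)\ge C^2x$ and $t/(K^2\|\bS\|_{\op})\ge Cx$, so with $C\ge\max(1/c,1/\sqrt c)$ the minimum is at least $x/c$ and the tail is at most $2e^{-x}$. Symmetry of $\bS$ is not essential but matches the statement.

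For the combined bound \eqref{eq:sg_combined}, apply each of \eqref{eq:sg_linear} and \eqref{eq:sg_quadratic} at level $x+\log 2$, so that each event has probability at most $e^{-x}$ and the union has probability at most $2e^{-x}$. On the complement, the triangle inequality bounds $|\bxi^\top\bS\bxi-\sigma^2\tr(\bS)+\bz^\top\bxi|$ by
\[
C\kappa^2\sigma^2\Big\{\|\bS\|_{\F}\sqrt{x+\log2}+\|\bS\|_{\op}(x+\log 2)\Big\}+C\kappa\sigma\|\bz\|\sqrt{x+\log2},
\]
and I would use $\kappa\le\kappa^2$ since $\kappa\ge1$.

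The only real obstacle is the additive $\log 2$ at small $x$: a bound of the form $C'(\sqrt{x}+x)$ cannot absorb a constant when $x\to0$. I would handle it as follows. For $x\ge\log 2$, use $x+\log2\le 2x$, which absorbs the shift into the constant. For $x<\log 2$, the right-hand side $2e^{-x}$ exceeds $1$, so the inequality holds trivially. This case split gives \eqref{eq:sg_combined} with a larger universal $C$.
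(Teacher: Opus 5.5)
Your proof is correct and follows the paper's route. For the linear form you use a sub-Gaussian MGF/Hoeffding bound, for the quadratic form the Rudelson--Vershynin Hanson--Wright inequality with $\mathbb E\,\bxi^\top\bS\bxi=\sigma^2\tr(\bS)$, and for the combined bound a union bound. Your treatment of the combination step (levels $x+\log 2$, the split at $x=\log 2$, and $\kappa\le\kappa^2$) just spells out what the paper leaves implicit; note that $\kappa\ge 1$ holds automatically, since $\|\xi_i\|_{\psi_2}\ge\sigma$ whenever $\mathbb E\xi_i^2=\sigma^2$.
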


\begin{proof}[Proof of Lemma~\ref{lem:subgaussian}]
We sketch the proof of this lemma briefly. The moment-generating-function characterization of a sub-Gaussian random variable and independence give
\[
\mathbb E\exp(t\bz^\top\bxi)
\leq
\exp(C\kappa^2\sigma^2t^2\|\bz\|^2),
\qquad t\in\R.
\]
Chernoff's method applied to \(t\) and \(-t\) proves
\eqref{eq:sg_linear}. Equation~\eqref{eq:sg_quadratic} is the Hanson--Wright inequality \citep{RudelsonVershynin2013}, since
$
\mathbb E(\bxi^\top\bS\bxi)
=\sum_{i=1}^n S_{ii}\mathbb E\xi_i^2
=\sigma^2\tr(\bS).
$
Finally, combining \eqref{eq:sg_linear} and \eqref{eq:sg_quadratic} gives \eqref{eq:sg_combined}.
\end{proof}

\begin{lemma}[Finite-dimensional mixed-tail chaining]
\label{lem:chaining}
  Let $E$ be a real vector space with $\operatorname{dim}(E) \leq d$, where $d \geq 1$, and let $T \subset E$ be compact. Fix $t_0 \in T$. Let $p_1$ and $p_2$ be seminorms on $E$, and define $r_1 \triangleq \sup _{t \in T} p_1(t-t_0)$ and $r_2 \triangleq \sup _{t \in T} p_2(t-t_0)$. Suppose that the process $\{X_t: t \in T\}$ has almost surely continuous sample paths and satisfies the increment inequality 
  \begin{equation}\label{eq:increment}
    \mathbb{P}\left(\left|X_t-X_s\right|>\kappa\left[\sqrt{u} p_2(t-s)+u p_1(t-s)\right]\right) \leq 2 \exp(-u), 
  \end{equation}
  for every $s, t \in T$ and $u \geq 0$, where $\kappa > 0$ is some constant. Then, there is a universal constant $C$ such that, for every $x \geq 0$, 
  \begin{equation}\label{eq:sup_prob}
    \mathbb{P}\left(\sup _{t \in T}\left|X_t-X_{t_0}\right|>C \kappa\left[r_2 \sqrt{d+x}+r_1(d+x)\right]\right) \leq \exp(-x). 
  \end{equation}
  Consequently, $\mathbb{E} \sup _{t \in T}|X_t-X_{t_0}| \leq C^{\prime} \kappa(r_2 \sqrt{d}+r_1 d)$ for another universal constant $C^{\prime}$. 
\end{lemma}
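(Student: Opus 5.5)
The plan is a standard generic-chaining argument with nets taken in a single normalized seminorm that controls both $p_1$ and $p_2$ simultaneously, and volumetric covering numbers in dimension $d$. By rescaling $X$ we may take $\kappa=1$. If $r_1=r_2=0$, the increment inequality with $u\to\infty$ forces $X_t=X_{t_0}$ a.s. for each $t$, and continuity gives the claim. Otherwise we may assume $r_1,r_2>0$: if one of them vanishes, replace it by an arbitrarily small $\epsilon>0$ and let $\epsilon\downarrow0$ at the end.

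\textbf{Step 1 (covering).} Define the seminorm
\[
q(v)\triangleq\max\{p_1(v)/r_1,\;p_2(v)/r_2\}, \qquad v\in E .
\]
Then $T-t_0$ lies in the $q$-ball of radius $1$. Passing to the quotient $E/\ker q$, which has dimension at most $d$ and on which $q$ is a norm, the volumetric argument gives, for every $\varepsilon\in(0,1]$, a set $N_\varepsilon\subseteq T$ with $|N_\varepsilon|\le(1+4/\varepsilon)^d$ and $\sup_{t\in T}\min_{s\in N_\varepsilon}q(t-s)\le\varepsilon$. The centers are taken in $T$ by covering at scale $\varepsilon/2$ and then selecting one point of $T$ in each nonempty cell.

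\textbf{Step 2 (chaining on finite sets).} Let $T_0=\{t_0\}$ and $T_k=N_{2^{-k}}$ for $k\ge1$, and let $\pi_k(t)\in T_k$ be a nearest point in $q$. Then
\[
q\bigl(\pi_k(t)-\pi_{k-1}(t)\bigr)\le 3\cdot2^{-k},
\]
so $p_2$ of each link is at most $3\cdot2^{-k}r_2$ and $p_1$ of each link is at most $3\cdot2^{-k}r_1$. The number of distinct links at level $k$ is at most $|T_k||T_{k-1}|\le\exp\{2d(k+3)\}$. Apply the increment inequality \eqref{eq:increment} to each link with
\[
u_k=2d(k+3)+k+x+2 .
\]
A union bound gives total failure probability at most $\sum_k 2\exp(-k-x-2)\le\exp(-x)/2$. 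On the complementary event,
\[
|X_{\pi_K(t)}-X_{t_0}|\le 3\sum_{k\le K}2^{-k}\bigl(r_2\sqrt{u_k}+r_1u_k\bigr)\le C\bigl[r_2\sqrt{d+x}+r_1(d+x)\bigr].
\]
This holds uniformly in $K$ and $t$, because $\sum_k2^{-k}k<\infty$ and $\sqrt{u_k}\lesssim\sqrt{k}\sqrt{d+x}$.

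\textbf{Step 3 (passing to the supremum).} This is the step I expect to be the main technical obstacle: $q$ may be degenerate, and the chain need not converge to $t$ in the topology in which $X$ is continuous. I will avoid limits along chains as follows.
\begin{itemize}
\item Fix a finite set $F\subseteq T$. For each $t\in F$, add one final link from $\pi_K(t)$ to $t$, with $q$-length at most $2^{-K}$.
\item Union-bound these $|F|$ links with $u=\log|F|+x+2$. The contribution is at most $2^{-K}\bigl[r_2\sqrt{u}+r_1u\bigr]$, which is negligible once $K=K(F,x)$ is chosen large.
\item This yields the bound with failure probability at most $\exp(-x)$ for $\sup_{t\in F}$, uniformly in $F$.
\item Let $F$ increase to a countable dense subset of $T$, which exists since $T$ is a compact subset of a finite-dimensional space. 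Monotone convergence of events transfers the bound to the sup over this countable set.
\item Almost sure continuity of sample paths then transfers it to $\sup_{t\in T}$, which also settles measurability.
\end{itemize}

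\textbf{Step 4 (expectation).} Write $Z=\sup_{t\in T}|X_t-X_{t_0}|$. Integrating the tail bound \eqref{eq:sup_prob} over $x\ge0$, that is, $\mathbb E Z=\int_0^\infty\mathbb P(Z>s)\,ds$ with the change of variables $s=C[r_2\sqrt{d+x}+r_1(d+x)]$, gives
\[
\mathbb E Z\le C'\bigl(r_2\sqrt d+r_1d\bigr).
\]
The integration produces additional terms $r_2/\sqrt d$ and $r_1$, which are absorbed because $d\ge1$.
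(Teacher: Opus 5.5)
Your proposal is correct and follows essentially the same route as the paper. Both arguments use the normalized seminorm $\max\{p_1/r_1,p_2/r_2\}$, a volumetric covering bound in the quotient by its kernel, dyadic nets with a level-dependent union bound over links, and a passage from finite sets to a countable dense subset followed by sample-path continuity. The differences are cosmetic. You close each chain with an explicit final link from $\pi_K(t)$ to $t$, where the paper instead identifies points at seminorm-distance zero in a finite $T$. You treat a vanishing $r_i$ by an $\epsilon$-perturbation rather than ``without loss of generality.'' You also carry out the tail integration for the expectation bound, which the paper leaves implicit.
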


\begin{proof}[Proof of Lemma~\ref{lem:chaining}]

  \textsc{\underline{Step 1: Introducing a quotient space.}} Without loss of generality, we assume that $r_1, r_2>0$, and define $p(v) \triangleq \max \{p_1(v)/r_1, p_2(v)/r_2 \}$. We see that $p: E \to [0, \infty)$ is also a seminorm, since $p(\lambda v)=|\lambda| p(v)$ and 
  \begin{equation*}
  \begin{split}
     p(v+w) & = \max \left\{\frac{p_1(v+w)}{r_1}, \frac{p_2(v+w)}{r_2} \right\} \\
       & \leq \max \left\{\frac{p_1(v)+p_1(w)}{r_1}, \frac{p_2(v)+p_2(w)}{r_2} \right\} \leq p(v)+p(w). 
  \end{split}
  \end{equation*}
  However, unlike a norm, it is possible to have $p(v)=0$ for a nonzero vector $v$. Define the kernel of $p$ as $K \triangleq \{v \in E: p(v)=0\}$, which contains the directions that are ``invisible'' under the seminorm $p$. In the quotient space $E / K$, define the equivalence class of $v$ as $[v] = Q(v) \triangleq v + K$, where $Q$ denotes the quotient map. On the quotient space, define the norm $\bar{p}([v])=p(v)$. Introducing the quotient space and its norm is necessary, because the unit ball under $p$ may be unbounded along $K$, making a direct volume argument below impossible. After quotienting out the invisible directions in $K$, $\bar{p}$ is a norm and its unit ball is bounded.
  
  \textsc{\underline{Step 2: The covering argument.}} For $\varepsilon>0$, the covering number $N(T, p, \varepsilon)$ is defined as the smallest integer $m$ for which there exist $t_1, \ldots, t_m \in T$ such that $T \subseteq \cup_{j=1}^m\{t \in E: p(t-t_j) \leq \varepsilon \}$. Since $p(t-s)=\bar{p}(Q(t)-Q(s))$, we have $N(T, p, \varepsilon)=N(Q(T), \bar{p}, \varepsilon)$. To upper bound $N(T, p, \varepsilon)$, it suffices to upper bound $N(Q(T), \bar{p}, \varepsilon)$. Let $q\triangleq\operatorname{dim}(E / K) \leq d$, and let $B\triangleq\{z \in E / K: \bar{p}(z) \leq 1\}$ be the unit ball in the quotient space. Choose a maximal $\varepsilon$-separated set $\{z_1, \ldots, z_m\} \subseteq Q(T)$, meaning that $\bar{p}(z_i-z_j)>\varepsilon$ for any $i \neq j$, and no additional point of $Q(T)$ can be added while preserving this property. Maximality implies that $\{z_1, \ldots, z_m\}$ is an $\varepsilon$-net. Hence $N(Q(T), \bar{p}, \varepsilon) \leq m$. Now consider the open balls $z_i+\frac{\varepsilon}{2} B^{\circ}, i=1, \ldots, m$, which are pairwise disjoint. It is also observed that the $\bar{p}$-diameter of $Q(T)$ is at most $2$. Therefore, fixing $z_0 \in Q(T)$, $Q(T) \subseteq z_0+2 B$. It follows that for any $1 \leq i \leq m$, $z_i+\frac{\varepsilon}{2} B^{\circ} \subseteq z_0+(2+\frac{\varepsilon}{2}) B$. Let $\operatorname{vol}_q$ denote Lebesgue volume on the $q$-dimensional quotient space. Translation invariance and scaling give $\operatorname{vol}_q(c B)=c^q \operatorname{vol}_q(B)$. Because the balls $z_i+\frac{\varepsilon}{2} B^{\circ}$ are disjoint and contained in the larger ball $z_0+(2+\frac{\varepsilon}{2}) B$, 
  \begin{equation*}
    m\left(\frac{\varepsilon}{2}\right)^q \operatorname{vol}_q(B) \leq\left(2+\frac{\varepsilon}{2}\right)^q \operatorname{vol}_q(B) .
  \end{equation*}
  Canceling the positive factor $\operatorname{vol}_q(B)$, we obtain 
  \begin{equation}\label{eq:cover_bound}
    N(T, p, \varepsilon) = N(Q(T), \bar{p}, \varepsilon) \leq m \leq\left(\frac{2+\varepsilon / 2}{\varepsilon / 2}\right)^q=\left(1+\frac{4}{\varepsilon}\right)^q \leq\left(1+\frac{4}{\varepsilon}\right)^d .
  \end{equation}
  
  \textsc{\underline{Step 3: Construction of the multiscale nets.}} We first assume that $T$ is finite. Set $T_0 \triangleq \{t_0\}$. Because $p(t-t_0) \leq 1$, this is a $1$-net of $T$. For every $k \geq 1$, choose a $2^{-k}$-net $T_k \subseteq T$ satisfying 
  \begin{equation}\label{eq:16}
    \left|T_k\right| \leq N\big(T, p, 2^{-k}\big) \leq\big(1+4 \cdot 2^k\big)^d, 
  \end{equation}
  where the last inequality follows from (\ref{eq:cover_bound}). For every $t \in T$, choose $\pi_k(t) \in T_k$ such that $p(t-\pi_k(t)) \leq 2^{-k}$. We set $\pi_0(t)=t_0$. The distance between two consecutive approximations satisfies 
  \begin{equation}\label{eq:18}
    \begin{split}
       p\left(\pi_k(t)-\pi_{k-1}(t)\right) & \leq p\left(\pi_k(t)-t\right)+p\left(t-\pi_{k-1}(t)\right) \\
        & \leq 2^{-k}+2^{-(k-1)}  =3 \cdot 2^{-k} .
    \end{split}
  \end{equation}
  Combining (\ref{eq:18}) with the definition of $p$, we have 
  \begin{equation}\label{eq:19}
    p_1\left(\pi_k(t)-\pi_{k-1}(t)\right) \leq 3 r_1 2^{-k}, 
  \end{equation}
  and
  \begin{equation}\label{eq:20}
    p_2\left(\pi_k(t)-\pi_{k-1}(t)\right) \leq 3 r_2 2^{-k}. 
  \end{equation}
  
  Let $\mathcal{E}_k \triangleq \{(\pi_k(t), \pi_{k-1}(t)): t \in T\}$ be the set of distinct edges between levels $k-1$ and $k$. Since the first endpoint lies in $T_k$ and the second lies in $T_{k-1}$, we have $|\mathcal{E}_k| \leq|T_k| \cdot |T_{k-1}|$. Moreover, 
  \begin{equation}\label{eq:23}
    \begin{split}
       \log \left|\mathcal{E}_k\right|  & \leq \log \left|T_k\right|+\log \left|T_{k-1}\right| \leq 6dk, 
    \end{split}
  \end{equation}
  where the last inequality follows from (\ref{eq:16}). 
  
  \textsc{\underline{Step 4: The union bound over all levels.}} Fix $x \geq 0$ and set $u_k\triangleq x+8 d k$. For a fixed edge $(v, w) \in \mathcal{E}_k$, the tail inequality (\ref{eq:increment}), together with (\ref{eq:19})-(\ref{eq:20}), gives 
  \begin{equation}\label{eq:25}
    \mathbb{P}\left(\left|X_v-X_w\right|>3 \kappa 2^{-k}\left\{r_2 \sqrt{u_k}+r_1 u_k\right\}\right) \leq 2 \exp(-u_k). 
  \end{equation}
  Define the event $\mathcal{B}_k$ as the inequality in (\ref{eq:25}) fails for at least one edge in $\mathcal{E}_k$. A union bound over the edges at level $k$ gives 
  \begin{equation*}\label{eq:27}
    \mathbb{P}(\mathcal{B}_k) \leq 2|\mathcal{E}_k| \exp(-u_k) \leq 2 \exp (6 d k) \exp (-x-8 d k)=2 \exp(-x) \exp(-2 d k), 
  \end{equation*}
  where the second inequality follows from (\ref{eq:23}) and the definition of $u_k$. Now taking the union bound over all levels, we have 
  \begin{equation}\label{eq:28}
    \begin{split}
       \mathbb{P}\left(\bigcup_{k \geq 1} \mathcal{B}_k\right) & \leq \sum_{k \geq 1} \mathbb{P}\left(\mathcal{B}_k\right)  \leq 2 \exp(-x) \sum_{k \geq 1} \exp(-2 d k) \\
        & \leq 2 \exp(-x) \sum_{k \geq 1} \exp(-2 k) =\frac{2}{e^2-1} \exp(-x)\\
        &  <\exp(-x) .
    \end{split}
  \end{equation}
  Thus, with probability at least $1-\exp(-x)$, the event in (\ref{eq:25}) holds simultaneously for every level $k$ and every edge in $\mathcal{E}_k$. 
  
  \textsc{\underline{Step 5: Telescoping the chain.}} Because $T$ is finite, after identifying points whose $p$-distance is zero, there exists a sufficiently large $K$ such that we may take $\pi_K(t)=t$ for every $t \in T$. On the event $(\cup_{k = 1}^K \mathcal{B}_k)^c$, we have 
  \begin{equation}\label{eq:30}
    \begin{split}
       \left|X_t-X_{t_0}\right| & \leq 3 \kappa \sum_{k=1}^K 2^{-k}\left[r_2 \sqrt{x+8 d k}+r_1(x+8 d k)\right] \\
         & \leq 3 \kappa r_2 \sum_{k \geq 1} 2^{-k} \sqrt{x+8 d k} +3 \kappa r_1 \sum_{k \geq 1} 2^{-k}(x+8 d k). 
    \end{split}
  \end{equation}
  For the first term in (\ref{eq:30}), we have 
  \begin{equation}\label{eq:33}
    \begin{split}
       \sum_{k \geq 1} 2^{-k} \sqrt{x+8 d k} & \leq \sqrt{x} \sum_{k \geq 1} 2^{-k}+\sqrt{8 d} \sum_{k \geq 1} 2^{-k} \sqrt{k}\\
         & \leq \sqrt{x}  + \sqrt{8 d} \left(\sum_{k \geq 1} 2^{-k}\right)^{1 / 2}\left(\sum_{k \geq 1} k 2^{-k}\right)^{1 / 2}\\
         & = \sqrt{x}+4 \sqrt{d}, 
    \end{split}
  \end{equation}
  where the second inequality follows from the Cauchy–Schwarz inequality, and the last equality uses $\sum_{k \geq 1} k 2^{-k}=2$. For the second term in (\ref{eq:30}), we have 
  \begin{equation}\label{eq:34}
    \sum_{k \geq 1} 2^{-k}(x+8 d k)  =x \sum_{k \geq 1} 2^{-k}+8 d \sum_{k \geq 1} k 2^{-k} =x+16 d
  \end{equation}
  Substituting (\ref{eq:33})-(\ref{eq:34}) into (\ref{eq:30}) yields 
  \begin{equation*}
    \begin{split}
       \sup _{t \in T}\left|X_t-X_{t_0}\right| & \leq 3 \kappa\left[r_2(\sqrt{x}+4 \sqrt{d})+r_1(x+16 d)\right] \leq  3 \kappa \left[r_2 5 \sqrt{d+x} + r_1 16(d+x) \right]\\
       & \leq 48 \kappa\left[r_2 \sqrt{d+x}+r_1(d+x)\right]
    \end{split}
  \end{equation*}
  Together with (\ref{eq:28}), this proves (\ref{eq:sup_prob}) for finite $T$.
  
  \textsc{\underline{Step 6: From finite sets to compact sets.}} Let $D=\{t_1, t_2, \ldots\}$ be a countable dense subset of $T$, and define the increasing finite sets $F_m=\{t_1, \ldots, t_m\}$. Set $Z_m=\max _{t \in F_m}|X_t-X_{t_0}|$. The finite-set result in Step~5 gives, with $B_x=48 \kappa[r_2 \sqrt{d+x}+r_1(d+x)]$, that 
  \begin{equation*}
    \mathbb{P}\left(Z_m>B_x\right) \leq \exp(-x) \quad \text { for every } m .
  \end{equation*}
  Because $F_m \subseteq F_{m+1}$, we have $Z_m \uparrow \sup _{t \in D}|X_t-X_{t_0}|$. Hence 
  \begin{equation*}
    \left\{\sup _{t \in D}\left|X_t-X_{t_0}\right|>B_x\right\}=\bigcup_{m \geq 1}\left\{Z_m>B_x\right\} .
  \end{equation*}
  Since the events on the right are increasing, continuity from below of probability gives 
  \begin{equation}\label{eq:38}
    \mathbb{P}\left(\sup _{t \in D}\left|X_t-X_{t_0}\right|>B_x\right)=\lim _{m \rightarrow \infty} \mathbb{P}\left(Z_m>B_x\right) \leq \exp(-x). 
  \end{equation}
  Finally, on every sample path for which $t \mapsto X_t$ is continuous, density of $D$ implies 
  \begin{equation}\label{eq:39}
    \sup _{t \in T}\left|X_t-X_{t_0}\right|=\sup _{t \in D}\left|X_t-X_{t_0}\right| .
  \end{equation}
  Combining (\ref{eq:38}) and (\ref{eq:39}) proves the result for compact $T$.
  
\end{proof}

\subsection{Proof of the main results in Theorem~\ref{theo:upper_convex_1}}\label{sec:proof_convex_1_core}

For notational simplicity, we write $C_p(\bw)$ for the $C_p$ criterion defined in (\ref{eq:Cp}), suppressing its dependence on $\mathcal{M}$ and $\bY$. In the following lemma, we explore the strong convexity of the Mallows' $C_p$ criterion. This property is essential for deriving a sharp oracle inequality and was also mentioned in Section~4 of \cite{Bellec2018}. However, in deriving the upper bound for the Mallows aggregation estimator in Proposition~7.2 of \cite{Bellec2018}, this property was not further exploited, which led to a suboptimal upper bound in \cite{Bellec2018}. 

\begin{lemma}[Strong convexity of the Mallows criterion]
\label{lem:strong_conv}
  For every \(\bw\in\mathcal{W}_{\mathrm{C}}\),
\begin{equation}\label{eq:strong_cp}
C_p(\hat{\bw})
\leq
C_p(\bw)
-
\|\hat{\bbf}_{\hat{\bw}|\mathcal{M}}
      -\hat{\bbf}_{\bw|\mathcal{M}}\|^2.
\end{equation}
\end{lemma}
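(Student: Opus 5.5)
The plan is to show that $C_p$ is an exactly quadratic function of the fitted value with Hessian $2\bI_n$, and then combine the first-order optimality condition at $\hat{\bw}$ with the exact quadratic expansion around $\hat{\bw}$.

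Step 1 (quadratic structure in $\bw$). Since $\hat{\bbf}_{\bw|\mathcal{M}}=\bA_{\bw}\bY+\bbb_{\bw}$ and $\tr(\bA_{\bw})$ are both linear in $\bw$, write
\[
h(\bw)\triangleq\hat{\bbf}_{\bw|\mathcal{M}}, \qquad \ell(\bw)\triangleq 2\sigma^2\tr(\bA_{\bw}),
\]
both linear in $\bw\in\R^M$. Then $C_p(\bw)=\|h(\bw)-\bY\|^2+\ell(\bw)$. This is a convex quadratic function on $\R^M$.

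Step 2 (exact expansion). Fix $\bw\in\mathcal{W}_{\mathrm{C}}$ and set $\bv\triangleq\bw-\hat{\bw}$. Because $h$ is linear, $h(\hat{\bw}+t\bv)=h(\hat{\bw})+t\,h(\bv)$ for every $t$. The map $g(t)\triangleq C_p(\hat{\bw}+t\bv)$ is therefore a quadratic polynomial in $t$:
\[
g(t)=g(0)+t\,g'(0)+t^2\|h(\bv)\|^2 .
\]
Here $h(\bv)=\hat{\bbf}_{\bw|\mathcal{M}}-\hat{\bbf}_{\hat{\bw}|\mathcal{M}}$, and
\[
g'(0)=2\langle \hat{\bbf}_{\hat{\bw}|\mathcal{M}}-\bY,\,h(\bv)\rangle+\ell(\bv).
\]
Evaluating at $t=1$ gives
\[
C_p(\bw)=C_p(\hat{\bw})+g'(0)+\|\hat{\bbf}_{\bw|\mathcal{M}}-\hat{\bbf}_{\hat{\bw}|\mathcal{M}}\|^2 .
\]

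Step 3 (first-order condition). The simplex $\mathcal{W}_{\mathrm{C}}$ is convex, so $\hat{\bw}+t\bv\in\mathcal{W}_{\mathrm{C}}$ for all $t\in[0,1]$. Since $\hat{\bw}$ minimizes $C_p$ over $\mathcal{W}_{\mathrm{C}}$, we have $g(t)\geq g(0)$ on $[0,1]$, which forces $g'(0)\geq 0$. Substituting this into Step 2 yields exactly (\ref{eq:strong_cp}).

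There is no serious obstacle. The only point needing care is that the trace penalty contributes only to the linear term, because it does not depend on $\bY$ and is linear in $\bw$. Consequently the curvature term equals $\|\cdot\|^2$ exactly, with constant one, rather than merely being bounded below.
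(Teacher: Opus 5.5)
Your proof is correct and follows essentially the same route as the paper's. The paper writes $C_p(\bw)=\|\bX(\bY)\bw-\bY\|^2+2\sigma^2\boldsymbol{\tau}^{\top}\bw$, expands it exactly around $\hat{\bw}$ with curvature term $\|\bX(\bY)(\bw-\hat{\bw})\|^2$, and invokes the first-order condition $\langle\nabla C_p(\hat{\bw}),\bw-\hat{\bw}\rangle\geq 0$; your restriction to the segment via $g(t)$, with $g'(0)\geq 0$, is the directional-derivative form of that same step.
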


\begin{proof}[Proof of Lemma~\ref{lem:strong_conv}]
  Condition on the observed response vector $\bY$, and introduce the $n \times M$ matrix 
\begin{equation*}
  \mathbf{X}(\bY) \triangleq \left[ \bA_1 \bY+\bbb_1, \ldots, \bA_M \bY+\bbb_M \right]. 
\end{equation*}
Then, the aggregated estimator can be written as $\hat{\bbf}_{\bw|\mathcal{M}}=\mathbf{X}(\bY) \bw$. Also let $\boldsymbol{\tau} \triangleq (\tr(\bA_1), \ldots, \tr(\bA_M))^{\top}$. Thus, the Mallows' $C_p$ criterion can be written as 
\begin{equation}\label{eq:C_p_equal}
  C_p(\bw) = \|\mathbf{X}(\bY) \bw-\bY\|^2+2 \sigma^2 \boldsymbol{\tau}^{\top} \bw. 
\end{equation}
Since $\mathcal{W}_{\mathrm{C}}$ is convex, any $\hat{\bw}$ minimizing the differentiable convex function in (\ref{eq:C_p_equal}) over $\mathcal{W}_{\mathrm{C}}$ satisfies $\langle \nabla C_p(\hat{\bw}), \bw-\hat{\bw}\rangle \geq 0$ for $\bw \in \mathcal{W}_{\mathrm{C}}$. The quadratic expansion of $C_p(\bw)$ gives 
\begin{equation}\label{eq:C_p_expan}
  C_p(\bw)= C_p(\hat{\bw})+\langle \nabla C_p(\hat{\bw}), \bw-\hat{\bw}\rangle  +\|\mathbf{X}(\bY)(\bw-\hat{\bw})\|^2.
\end{equation}
Combining (\ref{eq:C_p_equal}) with (\ref{eq:C_p_expan}) proves Lemma~\ref{lem:strong_conv}. 
  
\end{proof}

We are now in a position to prove Theorem~\ref{theo:upper_convex_1}. Since the proof is rather lengthy, we divide it into several steps. 

\begin{proof}[Proof of Theorem~\ref{theo:upper_convex_1}]
\textsc{\underline{Step 1: Basic inequality and representation.}} For a fixed comparator $\bw \in \mathcal{W}_{\mathrm{C}}$, set $\bu=\hat{\bw}-\bw$, and define $\bA_{\bu}\triangleq\sum_{j=1}^M u_j \bA_j$ and $\bbb_{\bu} \triangleq \sum_{j=1}^M u_j \bbb_j $. Then, define $\bV_{\bu} \triangleq \hat{\bbf}_{\hat{\bw}|\mathcal{M}}-\hat{\bbf}_{\bw|\mathcal{M}}=\bA_{\bu} \bY+\bbb_{\bu}$. Based on Lemma~\ref{lem:strong_conv}, we have 
\begin{equation}\label{eq:3.8}
2\big\langle\hat{\bbf}_{\bw|\mathcal{M}}-\bY, \bV_{\bu}\big\rangle \leq- 2\left\|\bV_{\bu}\right\|^2-2 \sigma^2 \tr\left(\bA_{\bu}\right). 
\end{equation}
Therefore, we have 
\begin{equation}\label{eq:3.9}
  \begin{split}
     \big\|\hat{\bbf}_{\hat{\bw}|\mathcal{M}}-\bbf\big\|^2-\big\|\hat{\bbf}_{\bw|\mathcal{M}}-\bbf\big\|^2 & =2\big\langle\hat{\bbf}_{\bw|\mathcal{M}}-\bbf, \bV_{\bu}\big\rangle+\left\|\bV_{\bu}\right\|^2 \\
     & =2\big\langle\hat{\bbf}_{\bw|\mathcal{M}}-\bY, \bV_{\bu}\big\rangle+2\big\langle\bxi, \bV_{\bu}\big\rangle+\left\|\bV_{\bu}\right\|^2\\
     & \leq 2 \bxi^{\top} \bV_{\bu}-2 \sigma^2 \tr\left(\bA_\bu\right)-\left\|\bV_\bu\right\|^2, 
  \end{split}
\end{equation}
where the last inequality follows from (\ref{eq:3.8}). 

Let $\mathcal{D}\triangleq \mathcal{W}_{\mathrm{C}}-\mathcal{W}_{\mathrm{C}}=\{\bw-\bv: \bw, \bv \in \mathcal{W}_{\mathrm{C}}\}$. Define the optimal weight vector minimizing the loss as $\bw^{\circ} \in \underset{\bw \in \mathcal{W}_{\mathrm{C}}}{\arg \min }\|\hat{\bbf}_{\bw|\mathcal{M}}-\bbf\|^2$. Then, $\bu=\hat{\bw}-\bw^{\circ} \in \mathcal{D}$. Therefore, using (\ref{eq:3.9}), we have the upper bound
\begin{equation}\label{eq:3.10}
  \big\|\hat{\bbf}_{\hat{\bw}|\mathcal{M}}-\bbf\big\|^2-\inf_{\bw \in \mathcal{W}_{\mathrm{C}}}\big\|\hat{\bbf}_{\bw|\mathcal{M}}-\bbf\big\|^2 \leq \sup _{\bu \in \mathcal{D}} Z(\bu),
\end{equation}
where $Z(\bu)\triangleq 2 \bxi^{\top} \bV_{\bu}-2 \sigma^2 \tr(\bA_\bu)-\|\bV_\bu\|^2$. Define $\ba_\bu\triangleq\bA_\bu \bbf+\bbb_\bu$. Since $\bY=\bbf+\bxi$, we have $\bV_\bu=\ba_\bu+\bA_\bu \bxi $. Thus, $Z(\bu)$ can be equivalently expressed as 
\begin{equation*}
  \begin{split}
     Z(\bu)= & 2 \bxi^{\top}\left(\ba_\bu+\bA_\bu \bxi\right)-2 \sigma^2 \tr\left(\bA_\bu\right)-\left\|\ba_\bu+\bA_\bu \bxi\right\|^2 \\
= & 2 \bxi^{\top} \ba_\bu+2 \bxi^{\top} \bA_\bu \bxi-2 \sigma^2 \tr\left(\bA_\bu\right)  -\left\|\ba_\bu\right\|^2-2 \bxi^{\top} \bA_\bu^{\top} \ba_\bu-\bxi^{\top} \bA_\bu^{\top} \bA_\bu \bxi .
  \end{split}
\end{equation*}

Introduce the symmetric matrix $\bS_\bu\triangleq\bA_\bu+\bA_\bu^{\top}-\bA_\bu^{\top} \bA_\bu$ and the vector $\bc_\bu\triangleq2\left(\bI_n-\bA_\bu^{\top}\right) \ba_\bu $. Then $2 \bxi^{\top} \bA_\bu \bxi-\bxi^{\top} \bA_\bu^{\top} \bA_\bu \bxi=\bxi^{\top} \bS_\bu \bxi$, and $2 \bxi^{\top} \ba_\bu-2 \bxi^{\top} \bA_\bu^{\top} \ba_\bu=\bxi^{\top} \bc_\bu $. Moreover, $\tr\left(\bS_\bu\right)=2 \tr\left(\bA_\bu\right)-\left\|\bA_\bu\right\|_{\F}^2 $. Hence $\bxi^{\top} \bS_\bu \bxi-2 \sigma^2 \tr\left(\bA_\bu\right)=\left\{\bxi^{\top} \bS_\bu \bxi-\sigma^2 \tr\left(\bS_\bu\right)\right\}-\sigma^2\left\|\bA_\bu\right\|_{\F}^2 $. Define the seminorm $\rho(\bu)^2=\left\|\ba_\bu\right\|^2+\sigma^2\left\|\bA_\bu\right\|_{\F}^2$ and the centered process
\begin{equation}\label{eq:3.17}
W(\bu)=\bxi^{\top} \bS_\bu \bxi-\sigma^2 \tr\left(\bS_\bu\right)+\bxi^{\top} \bc_\bu .
\end{equation}
Then, $Z(\bu)=W(\bu)-\rho(\bu)^2 $. The map $\bu \mapsto\left(\ba_\bu, \sigma \bA_\bu\right)$ is linear, so $\rho$ is indeed a seminorm.

\textsc{\underline{Step 2: Localized tail bound.}} For $r>0$, define the localized set $\mathcal{D}_r\triangleq\{\bu \in \mathcal{D}: \rho(\bu) \leq r\}$ under the seminorm $\rho$. In this step, we prove that for every $x \geq 0$,
\begin{equation}\label{eq:3.20}
\mathbb{P}\left[\sup _{\bu \in \mathcal{D}_r}|W(\bu)|>C_0(\kappa,A)\left\{\sigma r \sqrt{M+x}+\sigma^2(M+x)\right\}\right] \leq \exp(-x),  
\end{equation}
where $C_0(\kappa,A)$ is a constant depending only on $(\kappa,A)$. 

The key point in this step is to use a \emph{control seminorm} that simultaneously controls $\rho(\bu)$ and $\|\bu\|_1$. To that end, define $p_r(\bh)=\max \{\frac{\rho(\bh)}{r}, \frac{\|\bh\|_1}{2}\}$ on the ambient linear space containing $\mathcal{D}$. For $\bu \in \mathcal{D}$, we have $\|\bu\|_1 \leq 2$. Therefore, for any $\bu \in \mathcal{D}_r$, $p_r(\bu) \leq 1$. Let $\bu, \bv \in \mathcal{D}_r$, and write $\bh=\bu-\bv$. By the definition of $p_r$, we see that $\rho(\bh) \leq r p_r(\bh)$ and $\|\bh\|_1 \leq 2 p_r(\bh)$. Consequently, recalling the definition $\rho(\bh)=\sqrt{\|\ba_\bh\|^2+\sigma^2\|\bA_\bh\|_{\F}^2}$, we have $\left\|\ba_\bh\right\| \leq r p_r(\bh)$ and $\left\|\bA_\bh\right\|_{\F} \leq \frac{r}{\sigma} p_r(\bh)$ for any $\bh=\bu-\bv$. In addition, using (\ref{eq:operator-class-2}), we see that 
$
\left\|\bA_\bh\right\|_{\op} = \big\| \sum_{j=1}^M h_j \bA_j \big\|_{\op} \leq A\|\bh\|_1 \leq 2 A p_r(\bh)$. Also, since $\bu, \bv \in \mathcal{D}_r \subseteq \mathcal{D}$, we have $\left\|\bA_\bu\right\|_{\op} \leq 2 A$ and $\left\|\bA_\bv\right\|_{\op} \leq 2 A$, and because $\bu, \bv \in \mathcal{D}_r$, we have $\left\|\ba_\bu\right\| \leq r$ and $\left\|\ba_\bv\right\| \leq r$. 

Now, the increment of the centered process in (\ref{eq:3.17}) can be written as 
\begin{equation}\label{eq:3.17_1}
\begin{split}
W(\bu)-W(\bv)= & \bxi^{\top}\left(\bS_\bu-\bS_\bv\right) \bxi-\sigma^2 \tr\left(\bS_\bu-\bS_\bv\right)  +\bxi^{\top}\left(\bc_\bu-\bc_\bv\right).
\end{split}
\end{equation}
For the matrix increment in (\ref{eq:3.17_1}), observe that
\begin{equation}\label{eq:mat_incr}
  \bS_\bu-\bS_\bv  =\bA_\bh+\bA_\bh^{\top}-\big(\bA_\bu^{\top} \bA_\bu-\bA_\bv^{\top} \bA_\bv\big) =\bA_\bh+\bA_\bh^{\top}-\bA_\bu^{\top} \bA_\bh-\bA_\bh^{\top} \bA_\bv . 
\end{equation}
Thus, based on (\ref{eq:mat_incr}), the Frobenius norm of $\bS_\bu-\bS_\bv$ is upper bounded by 
\begin{equation}\label{eq:mat_incr_1}
  \begin{split}
     \sigma^2\left\|\bS_\bu-\bS_\bv\right\|_{\F} & \leq\sigma^2 \big(2+\left\|\bA_\bu\right\|_{\op}+\left\|\bA_\bv\right\|_{\op}\big)\left\|\bA_\bh\right\|_{\F}   \leq\sigma^2(2+4 A)\left\|\bA_\bh\right\|_{\F} \leq C_A \sigma r p_r(\bh). 
  \end{split}
\end{equation}
Similarly, using (\ref{eq:mat_incr}), the operator norm of $\bS_\bu-\bS_\bv$ is upper bounded by 
\begin{equation}\label{eq:mat_incr_2}
  \sigma^2\left\|\bS_\bu-\bS_\bv\right\|_{\op} \leq C_A \sigma^2 p_r(\bh) .
\end{equation}
For the linear part in (\ref{eq:3.17_1}), we have 
\begin{equation*}
  \left(\bc_\bu-\bc_\bv\right)  =2\big(\bI_n-\bA_\bu^{\top}\big) \ba_\bu-2\big(\bI_n-\bA_\bv^{\top}\big) \ba_\bv  =2\big(\bI_n-\bA_\bu^{\top}\big) \ba_\bh-2\bA_\bh^{\top} \ba_\bv. 
\end{equation*}
Therefore, 
\begin{equation}\label{eq:a_h_bound}
  \sigma\left\|\bc_\bu-\bc_\bv\right\|  \leq 2\sigma\big(1+\left\|\bA_\bu\right\|_{\op}\big)\left\|\ba_\bh\right\|+2\sigma\left\|\bA_\bh\right\|_{\op}\left\|\ba_\bv\right\| 
\leq C_A\sigma r p_r(\bh),
\end{equation}

Combining Lemma~\ref{lem:subgaussian} with the upper bounds in (\ref{eq:mat_incr_1}), (\ref{eq:mat_incr_2}), and (\ref{eq:a_h_bound}), we see that 
\begin{equation*}
\begin{split}
& \mathbb{P}\left(|W(\bu)-W(\bv)|>C_{\kappa,A} p_r(\bu-\bv)\left\{\sigma r \sqrt{z}+\sigma^2 z\right\}\right)\\
& \leq \mathbb{P}\left(|W(\bu)-W(\bv)|>C\kappa^2\left[\left\{\sigma^2\left\|\bS_\bu-\bS_\bv\right\|_{\F}+\sigma\left\|\bc_\bu-\bc_\bv\right\|\right\} \sqrt{z}+\sigma^2\left\|\bS_\bu-\bS_\bv\right\|_{\op} z\right]\right)\\
& \leq 2 \exp(-z). 
\end{split}
\end{equation*}
The set $\mathcal{D}_r$ is contained in a vector space of dimension at most $M$, and $p_r(\bu) \leq 1$ on $\mathcal{D}_r$. Moreover, $W(\boldsymbol{0})=0$, and $\bu \mapsto W(\bu)$ is continuous. The finite-dimensional mixed-tail chaining result in Lemma~\ref{lem:chaining} therefore gives (\ref{eq:3.20}). 

\textsc{\underline{Step 3: Peeling argument.}} We now prove
\begin{equation}\label{eq:3.36}
\sup _{\bu \in \mathcal{D}}\left\{W(\bu)-\rho(\bu)^2\right\} \leq C_{\kappa,A} \sigma^2(M+x) 
\end{equation}
with probability at least $1-\exp(-x)$. We decompose $\mathcal{D}$ into dyadic annuli. Set $q=M+x$. Since $M \geq 1$, we have $q \geq 1$. Let $\Gamma \geq 1$ be a sufficiently large constant depending only on $(\kappa,A)$, to be specified below, and define $r_k=\Gamma 2^k \sigma \sqrt{q}$ for $k=0,1,2, \ldots$. Define the inner region $\mathcal{A}_0=\mathcal{D}_{r_0}$, and for $k \geq 1$, define the annuli
\begin{equation}\label{eq:3.39}
\mathcal{A}_k=\left\{\bu \in \mathcal{D}: r_{k-1}<\rho(\bu) \leq r_k\right\}. 
\end{equation}
Since $r_k \to \infty$, we have $\mathcal{D}=\cup_{k=0}^{\infty} \mathcal{A}_k$. For every $k \geq 0$, choose the confidence parameter $x_k=x+2(k+1)$. Apply (\ref{eq:3.20}) to $\mathcal{D}_{r_k}$ with $x$ replaced by $x_k$. Let $\mathcal{E}_k$ denote the resulting event:
\begin{equation}\label{eq:3.42}
\sup _{\bu \in \mathcal{D}_{r_k}} W(\bu) \leq C_0\left\{\sigma r_k \sqrt{M+x_k}+\sigma^2\left(M+x_k\right)\right\} .
\end{equation}
Then, $\mathbb{P}(\mathcal{E}_k^c) \leq \exp(-x_k) $. Taking a union bound, we see that 
\begin{equation}\label{eq:3.43}
  \begin{split}
     \mathbb{P}\left(\bigcup_{k \geq 0} \mathcal{E}_k^c\right) & \leq \sum_{k \geq 0} \exp(-x_k)  =\exp(-x) \sum_{k \geq 0} \exp[-2(k+1)]\\
& =\frac{\exp(-x)}{e^2-1}  <\exp(-x) .
  \end{split}
\end{equation}
Thus, with probability at least $1-\exp(-x)$, all the localized inequalities (\ref{eq:3.42}) hold simultaneously. 

Write $q_k\triangleq M+x_k=q+2 k+2$. It is easy to verify that $\sqrt{q_k} \leq 2 \sqrt{q} 2^{k / 2}$. In the following, we divide the proof into two regions. In the inner region $\mathcal{A}_0$, for any $\bu \in \mathcal{A}_0$, we have $Z(\bu)=W(\bu)-\rho(\bu)^2 \leq W(\bu)$. Using (\ref{eq:3.42}) with $k=0$, together with $q_0 \leq 4 q$, we have 
$$
\begin{aligned}
\sup _{\bu \in \mathcal{A}_0} Z(\bu) & \leq C_0\left\{\sigma r_0 \sqrt{q_0}+\sigma^2 q_0\right\}  \leq C_0\left\{\sigma(\Gamma \sigma \sqrt{q})(2 \sqrt{q})+4 \sigma^2 q\right\} \\
& =C_0(2 \Gamma+4) \sigma^2 q \leq C_{\kappa,A} \sigma^2(M+x) .
\end{aligned}
$$

For the outer annuli $\mathcal{A}_k$ with $k \geq 1$, if $\bu \in \mathcal{A}_k$, then $-\rho(\bu)^2 \leq-r_{k-1}^2$. Moreover, $\mathcal{A}_k \subseteq \mathcal{D}_{r_k}$, so (\ref{eq:3.42}) gives
\begin{equation}\label{eq:ssccs}
  \begin{aligned}
\sup _{\bu \in \mathcal{A}_k} Z(\bu) & =\sup _{\bu \in \mathcal{A}_k}\left\{W(\bu)-\rho(\bu)^2\right\} \leq C_0\left\{\sigma r_k \sqrt{q_k}+\sigma^2 q_k\right\}-r_{k-1}^2 .
\end{aligned}
\end{equation}
We now evaluate the three terms in (\ref{eq:ssccs}) separately. For the first term, we see that 
$$
\begin{aligned}
\sigma r_k \sqrt{q_k} & \leq \sigma\left(\Gamma 2^k \sigma \sqrt{q}\right)\left(2 \sqrt{q} 2^{k / 2}\right)  =2 \Gamma \sigma^2 q 2^{3 k / 2} .
\end{aligned}
$$
For the second term, we have $\sigma^2 q_k \leq 4 \sigma^2 q 2^k$. The third term can be written as $r_{k-1}^2 =\left(\Gamma 2^{k-1} \sigma \sqrt{q}\right)^2 =\frac{\Gamma^2}{4} \sigma^2 q 2^{2 k}$. Substituting these quantities into (\ref{eq:ssccs}) gives,
$$
\sup _{\bu \in \mathcal{A}_k} Z(\bu) \leq \sigma^2 q\left\{2 C_0 \Gamma 2^{3 k / 2}+4 C_0 2^k-\frac{\Gamma^2}{4} 2^{2 k}\right\} \leq \sigma^2 q 2^{2 k}\left\{\sqrt{2} C_0 \Gamma+2 C_0-\frac{\Gamma^2}{4}\right\}.
$$
Choose $\Gamma=\Gamma(\kappa,A)$ sufficiently large that $\frac{\Gamma^2}{4} \geq \sqrt{2} C_0 \Gamma+2 C_0 $. Then, for every $k \geq 1$, $\sup _{\bu \in \mathcal{A}_k} Z(\bu) \leq 0$. Combining the above results gives 
\begin{equation}\label{eq:3.58}
  \begin{split}
\sup _{\bu \in \mathcal{D}} Z(\bu) & =\max \left\{\sup _{\bu \in \mathcal{A}_0} Z(\bu), \sup _{k \geq 1} \sup _{\bu \in \mathcal{A}_k} Z(\bu)\right\}  \leq C_{\kappa,A} \sigma^2(M+x) 
\end{split}
\end{equation}
with probability at least $1-\exp(-x)$. This proves (\ref{eq:3.36}).

\textsc{\underline{Step 4: Completion of the oracle inequality.}} Combining (\ref{eq:3.10}) and (\ref{eq:3.58}), with probability at least $1 - \exp(-x)$, 
\begin{equation}
L_n(\hat{\bbf}_{\hat{\bw}|\mathcal{M}},\bbf)-\inf _{\bw \in \mathcal{W}_{\mathrm{C}}}L_n(\hat{\bbf}_{\bw|\mathcal{M}},\bbf) \leq C_{\kappa,A} \frac{\sigma^2(M+x)}{n} .
\end{equation}
This proves (\ref{eq:upper_small_prob}). Integrating the exponential tail gives
$$
\mathbb{E}L_n(\hat{\bbf}_{\hat{\bw}|\mathcal{M}},\bbf)-\mathbb{E}\inf _{\bw \in \mathcal{W}_{\mathrm{C}}}L_n(\hat{\bbf}_{\bw|\mathcal{M}},\bbf) \leq C_{\kappa,A}\frac{\sigma^2 M}{n} .
$$
This proves (\ref{eq:upper_small_random}). Combining this inequality with the fact that $\mathbb{E}\inf _{\bw \in \mathcal{W}_{\mathrm{C}}}L_n(\hat{\bbf}_{\bw|\mathcal{M}},\bbf) \leq \inf _{\bw \in \mathcal{W}_{\mathrm{C}}}\mathbb{E}L_n(\hat{\bbf}_{\bw|\mathcal{M}},\bbf)=\inf _{\bw \in \mathcal{W}_{\mathrm{C}}}R_n(\hat{\bbf}_{\bw|\mathcal{M}},\bbf)$ gives 
$$
\mathbb{E}L_n(\hat{\bbf}_{\hat{\bw}|\mathcal{M}},\bbf) \leq \inf _{\bw \in \mathcal{W}_{\mathrm{C}}}R_n(\hat{\bbf}_{\bw|\mathcal{M}},\bbf)+C_{\kappa,A} \frac{\sigma^2 M}{n}.
$$
This proves (\ref{eq:upper_small_risk}).

\end{proof}

\section{Proof of Theorem~\ref{theo:upper_convex_2}}

\subsection{Preliminaries}

Before proving Theorem~\ref{theo:upper_convex_2}, we first collect some notation that will be used throughout the proof. For \(\bw\in\mathcal{W}_{\mathrm C}\), write
$
\mathcal L(\bw)
\triangleq
\|\hat{\bbf}_{\bw|\mathcal{M}}-\bbf\|^2.
$
Then, $L_n(\hat{\bbf}_{\bw|\mathcal{M}},\bbf)=n^{-1}\mathcal L(\bw)$. We also use the shorthand
$
\hat{\bbf}_j
\triangleq
\hat{\bbf}_{\be_j|\mathcal{M}}
=
\bA_j\bY+\bbb_j.
$
For \(\bu\in\R^M\), define
$
\bA_{\bu}=\sum_{j=1}^M u_j\bA_j$,
$\bbb_{\bu}=\sum_{j=1}^M u_j\bbb_j$, and
$\bV_{\bu}=\bA_{\bu}\bY+\bbb_{\bu}$.
For notational simplicity, we use $C_p(\bw)$ to denote the $C_p$ criterion defined in (\ref{eq:Cp}), suppressing its dependence on $\mathcal{M}$ and $\bY$. In addition, define
\begin{equation}\label{eq:G_def}
G(\bu)
\triangleq
2\bxi^\top\bV_{\bu}
-2\sigma^2\tr(\bA_{\bu}).
\end{equation}
For every \(\bs,\bt\in\mathcal{W}_{\mathrm C}\), a direct expansion of the $C_p$ criterion yields
\begin{equation}\label{eq:loss_Cp_identity}
\mathcal L(\bs)-\mathcal L(\bt)
=
C_p(\bs)-C_p(\bt)+G(\bs-\bt).
\end{equation}

We next present two useful lemmas. Lemma~\ref{lem:segment} establishes a one-dimensional segment inequality. 

\begin{lemma}[Uniform segment inequality]\label{lem:segment}
Let
\(\mathcal S=[\bw_0,\bw_1]\subseteq\mathcal{W}_{\mathrm C}\) be any deterministic line segment. Then, for every \(x\geq0\),
\begin{equation}\label{eq:segment}
\mathbb P\left[
\sup_{\bu\in\mathcal S-\mathcal S}
\left\{
G(\bu)-\frac12\|\bV_{\bu}\|^2
\right\}
>
C_{\kappa,A}\sigma^2(1+x)
\right]
\leq \exp(-x).
\end{equation}
\end{lemma}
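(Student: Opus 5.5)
Since $\mathcal S=[\bw_0,\bw_1]$, every $\bu\in\mathcal S-\mathcal S$ has the form $\bu=t\bh$ with $\bh\triangleq\bw_1-\bw_0$ and $t\in[-1,1]$. The plan is to repeat Steps~1--3 of the proof of Theorem~\ref{theo:upper_convex_1}, but on this one-dimensional set, so that the dimension $M$ is replaced by $1$.

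\emph{Step 1 (decomposition).} Write $\ba_\bu=\bA_\bu\bbf+\bbb_\bu$, so that $\bV_\bu=\ba_\bu+\bA_\bu\bxi$. Expanding gives
\[
G(\bu)-\tfrac12\|\bV_\bu\|^2=\widetilde W(\bu)-\tfrac12\rho(\bu)^2 .
\]
Here $\rho(\bu)^2=\|\ba_\bu\|^2+\sigma^2\|\bA_\bu\|_{\F}^2$ is the seminorm of Step~1, and the centered process is
\[
\widetilde W(\bu)=\bxi^\top\widetilde\bS_\bu\bxi-\sigma^2\tr(\widetilde\bS_\bu)+\bxi^\top\widetilde\bc_\bu ,
\]
with
\[
\widetilde\bS_\bu=\bA_\bu+\bA_\bu^\top-\tfrac12\bA_\bu^\top\bA_\bu,\qquad
\widetilde\bc_\bu=2\bigl(\bI_n-\tfrac12\bA_\bu^\top\bigr)\ba_\bu .
\]
The trace term matches exactly: $\tr(\widetilde\bS_\bu)=2\tr(\bA_\bu)-\tfrac12\|\bA_\bu\|_{\F}^2$. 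Hence the deterministic term $-\sigma^2\cdot\tfrac12\|\bA_\bu\|_{\F}^2$ combines with $-\tfrac12\|\ba_\bu\|^2$ to give $-\tfrac12\rho(\bu)^2$. This is the same algebra as in the proof of Theorem~\ref{theo:upper_convex_1}, with the coefficient $\tfrac12$ in place of $1$.

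\emph{Step 2 (localized tail bound on the segment).} Set $\mathcal T_r=\{\bu\in\mathcal S-\mathcal S:\rho(\bu)\le r\}$. For $\bu,\bv\in\mathcal T_r$ we have $\bu-\bv=(t-s)\bh$ and $\|\bh\|_1\le2$. The increment bounds (\ref{eq:mat_incr_1})--(\ref{eq:a_h_bound}) carry over verbatim, with constants only slightly changed by the factor $\tfrac12$, under the control seminorm $p_r(\bh')=\max\{\rho(\bh')/r,\|\bh'\|_1/2\}$. Lemma~\ref{lem:subgaussian} then gives the mixed-tail increment condition (\ref{eq:increment}). Since $\mathcal T_r$ lies in the line $\mathrm{span}(\bh)$, Lemma~\ref{lem:chaining} applies with $d=1$ and $t_0=\boldsymbol 0$, where $\widetilde W(\boldsymbol0)=0$. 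This yields
\[
\mathbb P\Bigl[\sup_{\mathcal T_r}|\widetilde W|>C_0\{\sigma r\sqrt{1+x}+\sigma^2(1+x)\}\Bigr]\le e^{-x}.
\]
There is one point to check. In Theorem~\ref{theo:upper_convex_1} the increments were taken inside $\mathcal D$, where $\|\bA_\bu\|_{\op}\le 2A$ was used. Here $\bu=t\bh$ with $|t|\le1$, so $\|\bA_\bu\|_{\op}\le2A$ still holds. The bound $\|\ba_\bv\|\le r$ holds on $\mathcal T_r$, so nothing further is needed.

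\emph{Step 3 (peeling).} Peel exactly as in Step~3 of the proof of Theorem~\ref{theo:upper_convex_1}. Take $q=1+x$, radii $r_k=\Gamma2^k\sigma\sqrt q$, and confidence levels $x_k=x+2(k+1)$. On the inner ball the supremum is at most $C\sigma^2(1+x)$. On the annulus $\mathcal A_k$ with $k\ge1$, the drift $-\tfrac12 r_{k-1}^2=-\tfrac{\Gamma^2}{8}\sigma^2q2^{2k}$ dominates $C_0\{\sigma r_k\sqrt{q_k}+\sigma^2q_k\}$ once $\Gamma$ is large enough. A union bound over $k$ gives total failure probability below $e^{-x}$, which proves (\ref{eq:segment}).

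The main obstacle is verifying the increment tail with the modified matrix $\widetilde\bS$. Its increment is
\[
\widetilde\bS_\bu-\widetilde\bS_\bv=\bA_{\bu-\bv}+\bA_{\bu-\bv}^\top-\tfrac12\bigl(\bA_\bu^\top\bA_{\bu-\bv}+\bA_{\bu-\bv}^\top\bA_\bv\bigr),
\]
so the norm bounds are routine. The substantive point is that the chaining dimension collapses to $1$, and this is what removes the factor $M$.
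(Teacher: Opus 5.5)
Your proof is correct, but it takes a different route from the paper's. The paper does no new chaining. It treats the two endpoint aggregates $\hat{\bbf}_{\bw_0|\mathcal M}$ and $\hat{\bbf}_{\bw_1|\mathcal M}$ as a new two-element dictionary, whose linear parts still satisfy $\|\bA_{\bw_k}\|_{\op}\le A$ by convexity. It notes that this dictionary's difference set maps onto $\mathcal S-\mathcal S$, and applies the already proved global process bound \eqref{eq:3.58} with $M=2$. This gives $\sup_{\bu\in\mathcal S-\mathcal S}\{G(\bu)-\|\bV_{\bu}\|^2\}\le C_{\kappa,A}\sigma^2(1+x)$. It then passes from coefficient $1$ to $\tfrac12$ by homogeneity: $G(\bu)-\tfrac12\|\bV_{\bu}\|^2=2\{G(\bu/2)-\|\bV_{\bu/2}\|^2\}$, with $\bu/2\in\mathcal S-\mathcal S$ because the set is symmetric and star-shaped. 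You instead build the coefficient $\tfrac12$ into the process through $\widetilde\bS_{\bu}$, $\widetilde\bc_{\bu}$ and the drift $-\tfrac12\rho(\bu)^2$. You then re-verify the increment bounds, chain on $\operatorname{span}(\bh)$ with $d=1$, and peel. Your decomposition, the increment formula for $\widetilde\bS$, the bound $\|\bA_{\bu}\|_{\op}\le 2A$ on $\mathcal S-\mathcal S$, and the peeling arithmetic with drift $-\tfrac{\Gamma^2}{8}\sigma^2q2^{2k}$ all go through. The paper's reduction is shorter and needs no new norm computations, but it depends on identifying the two-element-dictionary process with the original one. Your argument is self-contained and shows directly that the factor $\tfrac12$ only changes constants and that the chaining dimension is one rather than $M$.
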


\begin{proof}[Proof of Lemma~\ref{lem:segment}]
Regard the two endpoints as a new dictionary consisting of the affine estimators
$
\hat{\bbf}_{\bw_k|\mathcal{M}}
=
\bA_{\bw_k}\bY+\bbb_{\bw_k}$, where
$k=0,1.
$
Since \(\bw_k\in\mathcal{W}_{\mathrm C}\) and $\|\bA_j\|_{\op}\leq A$, we have
\[
\|\bA_{\bw_k}\|_{\op}
\leq
\sum_{j=1}^M (w_k)_j\|\bA_j\|_{\op}
\leq A,
\]
where $(w_k)_j$ denotes the $j$th component of $\bw_k$. Moreover,
$
\mathcal S-\mathcal S
=
\{t(\bw_1-\bw_0):|t|\leq1\}.
$
Under the linear map from the two-dimensional coefficient vector of this new dictionary to the original coefficient space, its difference set is identified with \(\mathcal S-\mathcal S\), and the associated processes coincide with those defined above.
Apply the global process bound \eqref{eq:3.58}, rather than merely the final oracle inequality of Theorem~\ref{theo:upper_convex_1}, to this two-element dictionary. It gives
\begin{equation}\label{eq:segment_first}
\mathbb P\left[
\sup_{\bu\in\mathcal S-\mathcal S}
\{W(\bu)-\rho(\bu)^2\}
>
C_{\kappa,A}\sigma^2(1+x)
\right]
\leq \exp(-x),
\end{equation}
after absorbing the fixed dictionary size \(2\) into the constant.

For completeness, the process in \eqref{eq:segment_first} is exactly the process needed in Lemma~\ref{lem:segment}. Indeed, with
\(\ba_{\bu}=\bA_{\bu}\bbf+\bbb_{\bu}\),
$
\bV_{\bu}=\ba_{\bu}+\bA_{\bu}\bxi,
$
the definitions in the paragraph above equation (\ref{eq:3.17}) give
\begin{align*}
W(\bu)-\rho(\bu)^2
&=
\bxi^\top
(\bA_{\bu}+\bA_{\bu}^\top-\bA_{\bu}^\top\bA_{\bu})
\bxi
-2\sigma^2\tr(\bA_{\bu})
+2\bxi^\top(\bI_n-\bA_{\bu}^\top)\ba_{\bu}
-\|\ba_{\bu}\|^2\\
&=
2\bxi^\top(\ba_{\bu}+\bA_{\bu}\bxi)
-2\sigma^2\tr(\bA_{\bu})
-\|\ba_{\bu}+\bA_{\bu}\bxi\|^2\\
&=
G(\bu)-\|\bV_{\bu}\|^2.
\end{align*}
Thus, \eqref{eq:segment_first} controls
\(G(\bu)-\|\bV_{\bu}\|^2\).

Finally, \(\mathcal S-\mathcal S\) is symmetric and star-shaped. Hence
\(\bu/2\in\mathcal S-\mathcal S\), and linearity gives
$
G(\bu/2)=\frac12G(\bu)$ and
$
\bV_{\bu/2}=\frac12\bV_{\bu}.
$
Therefore,
\[
G(\bu)-\frac12\|\bV_{\bu}\|^2
=
2\left\{
G(\bu/2)-\|\bV_{\bu/2}\|^2
\right\}.
\]
Applying \eqref{eq:segment_first} to \(\bu/2\) and enlarging the constant proves \eqref{eq:segment}.
\end{proof}

\begin{lemma}[Maurey grid]\label{lem:maurey_grid}
For an integer \(1\leq m\leq M\), let
\begin{equation}\label{eq:G_m}
\mathcal G_m
\triangleq
\left\{
\frac1m\sum_{\ell=1}^m\be_{j_\ell}:
j_1,\ldots,j_m\in\{1,\ldots,M\}
\right\}
\subseteq\mathcal{W}_{\mathrm C}.
\end{equation}
Then
\begin{equation}\label{eq:grid_cardinality}
N_m\triangleq|\mathcal G_m|
=
\binom{M+m-1}{m}
\leq
\left(\frac{2eM}{m}\right)^m.
\end{equation}
\end{lemma}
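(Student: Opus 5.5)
The plan is to identify $\mathcal G_m$ with a set of multisets and then bound a binomial coefficient by an elementary estimate.

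\emph{Step 1: counting.} A point $\frac1m\sum_{\ell=1}^m\be_{j_\ell}$ depends only on the multiset $\{j_1,\ldots,j_m\}$. Indeed, its $j$th coordinate equals $k_j/m$, where $k_j=\#\{\ell:j_\ell=j\}$. The map from multisets to grid points is therefore $(k_1,\ldots,k_M)\mapsto(k_1/m,\ldots,k_M/m)$, which is injective. So $\mathcal G_m$ is in bijection with the vectors $(k_1,\ldots,k_M)\in\{0,1,\ldots\}^M$ satisfying $\sum_j k_j=m$. By stars and bars there are $\binom{M+m-1}{m}$ of these. Membership in $\mathcal W_{\mathrm C}$ holds because each point has nonnegative coordinates $k_j/m\le 1$ that sum to one.

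\emph{Step 2: the bound.} I would use the standard inequality $\binom{N}{m}\le N^m/m!\le (eN/m)^m$, which relies on $m!\ge (m/e)^m$ (from $e^m\ge m^m/m!$). Taking $N=M+m-1$ gives
\[
N_m\le\left(\frac{e(M+m-1)}{m}\right)^m.
\]
Since $m\le M$, we have $M+m-1\le 2M$, and hence $N_m\le (2eM/m)^m$.

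No step is a real obstacle. The only point needing care is Step 1: one must check that distinct multisets give distinct grid points, which is exactly the injectivity noted above, so that the cardinality is \emph{equal} to the multiset count rather than merely bounded by it.
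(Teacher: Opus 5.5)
Your proposal is correct and follows the paper's proof essentially verbatim: the paper identifies grid points with nonnegative integer vectors $(k_1,\ldots,k_M)$ summing to $m$, applies stars and bars for the equality, and then uses $\binom{M+m-1}{m}\le\{e(M+m-1)/m\}^m\le(2eM/m)^m$ via $m\le M$. Your explicit check that the map from count vectors to grid points is injective, and your justification of $m!\ge(m/e)^m$, fill in details the paper leaves implicit.
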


\begin{proof}[Proof of Lemma~\ref{lem:maurey_grid}]
Every grid point is determined by nonnegative integers
\(k_1,\ldots,k_M\) summing to \(m\), so the stars-and-bars identity gives the equality. Moreover,
\[
\binom{M+m-1}{m}
\leq
\left\{\frac{e(M+m-1)}m\right\}^m
\leq
\left(\frac{2eM}{m}\right)^m,
\]
because \(m\leq M\).
\end{proof}

\subsection{Proof of the main results in Theorem~\ref{theo:upper_convex_2}}\label{sec:proof_convex_2_core}

The proof below uses Maurey sampling twice. The first application approximates the realized convex oracle by a point of \(\mathcal G_m\). The second sparsifies the data-dependent Mallows weight vector \(\hat{\bw}\). Because the first grid point is itself data-dependent, the empirical-process bound must be uniform over every pair of points in \(\mathcal G_m\). This additional uniformization is the key step that yields a deviation inequality for the Mallows aggregate. Set
\begin{equation}\label{eq:Lambda_def}
\Lambda^2
\triangleq
\sigma^2+F^2+B^2.
\end{equation}

\begin{proof}[Proof of Theorem~\ref{theo:upper_convex_2}]

\noindent\textsc{\underline{Step 1: First Maurey sampling: approximation of the realized oracle.}}
For each realization of \(\bY\), choose
$
\bw^\circ
\in
\argmin_{\bw\in\mathcal{W}_{\mathrm C}}\mathcal L(\bw),
$
using a fixed tie-breaking rule. On an auxiliary probability space, conditionally on \(\bY\), let
\(J_1^\circ,\ldots,J_m^\circ\) be independent with
$
\mathbb P_\Theta(J_\ell^\circ=j\mid\bY)=w_j^\circ,
$
and set
\[
\tilde{\bw}^{\,\circ}
=
\frac1m\sum_{\ell=1}^m\be_{J_\ell^\circ}
\in\mathcal G_m.
\]
Conditional on \(\bY\), the variance identity gives
\begin{equation}\label{eq:first_maurey_identity}
\begin{split}
\mathbb E_\Theta\left[\mathcal L(\tilde{\bw}^{\,\circ})\mid\bY\right]
&=
\mathcal L(\bw^\circ)
+
\frac1m
\sum_{j=1}^M
w_j^\circ
\|\hat{\bbf}_j-\hat{\bbf}_{\bw^\circ|\mathcal{M}}\|^2\\
&=
\mathcal L(\bw^\circ)
+
\frac1m
\left\{
\sum_{j=1}^M w_j^\circ\|\hat{\bbf}_j\|^2
-\|\hat{\bbf}_{\bw^\circ|\mathcal{M}}\|^2
\right\}.
\end{split}
\end{equation}
Define the maximal candidate energy by
$
\mathcal H(\bY)
\triangleq
\max_{1\leq j\leq M}\|\hat{\bbf}_j\|^2.
$
It follows from \eqref{eq:first_maurey_identity} that
\[
\mathbb E_\Theta\left[\mathcal L(\tilde{\bw}^{\,\circ})\mid\bY\right]
\leq
\mathcal L(\bw^\circ)+\frac{\mathcal H(\bY)}m.
\]
Let
$
\bw_1
\in
\argmin_{\bg\in\mathcal G_m}\mathcal L(\bg),
$
again using a fixed tie-breaking rule. Since \(\tilde{\bw}^{\,\circ}\) takes values in the finite set \(\mathcal G_m\), the minimum over that set cannot exceed its conditional average. Consequently, pointwise in \(\bY\),
\begin{equation}\label{eq:pathwise_grid_oracle}
\mathcal L(\bw_1)
\leq
\inf_{\bw\in\mathcal{W}_{\mathrm C}}\mathcal L(\bw)
+\frac{\mathcal H(\bY)}m.
\end{equation}
Notice that \(\bw_1\) is generally data-dependent. 

\medskip
\noindent\textsc{\underline{Step 2: Uniform control over all grid-to-grid segments.}}
For every \(\bg_0,\bg_1\in\mathcal G_m\), let
$
\mathcal S_{\bg_0,\bg_1}=[\bg_0,\bg_1]
$
and define
\begin{equation}\label{eq:Gamma_m}
\Gamma_m(\bY)
=
\max_{\bg_0,\bg_1\in\mathcal G_m}
\sup_{\bu\in\mathcal S_{\bg_0,\bg_1}-\mathcal S_{\bg_0,\bg_1}}
\left\{
G(\bu)-\frac12\|\bV_{\bu}\|^2
\right\}.
\end{equation}
Since \(\boldsymbol{0}\in\mathcal S_{\bg_0,\bg_1}-\mathcal S_{\bg_0,\bg_1}\),
\(\Gamma_m(\bY)\geq0\). Apply Lemma~\ref{lem:segment} with confidence parameter
\(x+2\log N_m+\log2\) and take a union bound over at most \(N_m^2\) segments. After enlarging the constant, we obtain
\begin{equation}\label{eq:Gamma_tail}
\mathbb P\left[
\Gamma_m(\bY)
>
C_{\kappa,A}\sigma^2
\{1+2\log N_m+x\}
\right]
\leq \frac12\exp(-x).
\end{equation}

\medskip
\noindent\textsc{\underline{Step 3: Second Maurey sampling: sparsifying \(\hat{\bw}\).}}
Condition on \(\bY\). Let
\(J_1,\ldots,J_m\) be conditionally independent with
$
\mathbb P_\Theta(J_\ell=j\mid\bY)
=
\hat w_j,
$
and define
$
\tilde{\bw}
=
\frac1m\sum_{\ell=1}^m\be_{J_\ell}
\in\mathcal G_m.
$
Let
\begin{equation}\label{eq:conditional_variance}
\mathcal V(\bY)
=
\sum_{j=1}^M
\hat w_j
\|\hat{\bbf}_j-\hat{\bbf}_{\hat{\bw}|\mathcal{M}}\|^2.
\end{equation}
For each realization \(\bg\) of \(\tilde{\bw}\), take
$
\bv_{\bg}
\in
\argmin_{\bv\in[\bw_1,\bg]}
\mathcal L(\bv).
$
The first-order optimality condition for the convex quadratic
\(\mathcal L\) on this segment gives
\begin{equation}\label{eq:loss_strong_segment}
\mathcal L(\bg)-\mathcal L(\bv_{\bg})
\geq
\|\hat{\bbf}_{\bg|\mathcal{M}}
-\hat{\bbf}_{\bv_{\bg}|\mathcal{M}}\|^2
=
\|\bV_{\bg-\bv_{\bg}}\|^2.
\end{equation}
By \eqref{eq:loss_Cp_identity}, \eqref{eq:Gamma_m}, and
\eqref{eq:loss_strong_segment}, we have
\begin{align*}
\mathcal L(\bg)-\mathcal L(\bv_{\bg})
&=
C_p(\bg)-C_p(\bv_{\bg})
+G(\bg-\bv_{\bg})\\
&\leq
C_p(\bg)-C_p(\bv_{\bg})
+
\frac12
\{\mathcal L(\bg)-\mathcal L(\bv_{\bg})\}
+\Gamma_m(\bY).
\end{align*}
Thus,
\begin{equation}\label{eq:g_v_bound}
\mathcal L(\bg)-\mathcal L(\bv_{\bg})
\leq
2\{C_p(\bg)-C_p(\bv_{\bg})\}
+2\Gamma_m(\bY).
\end{equation}
Because \(\bw_1\in[\bw_1,\bg]\),
\(\mathcal L(\bv_{\bg})\leq\mathcal L(\bw_1)\). Also,
\(\hat{\bw}\) minimizes \(C_p\) over \(\mathcal{W}_{\mathrm C}\), so
\(C_p(\hat{\bw})\leq C_p(\bv_{\bg})\). Hence
\begin{equation}\label{eq:g_bound}
\mathcal L(\bg)
\leq
\mathcal L(\bw_1)
+2\{C_p(\bg)-C_p(\hat{\bw})\}
+2\Gamma_m(\bY).
\end{equation}

Conditional on \(\bY\), the Maurey variance identities are
\begin{align}
\mathbb E_\Theta\left[\mathcal L(\tilde{\bw})\mid\bY\right]
&=
\mathcal L(\hat{\bw})
+\frac{\mathcal V(\bY)}m,
\label{eq:second_maurey_loss}\\
\mathbb E_\Theta\left[C_p(\tilde{\bw})\mid\bY\right]
&=
C_p(\hat{\bw})
+\frac{\mathcal V(\bY)}m.
\label{eq:second_maurey_cp}
\end{align}
For \eqref{eq:second_maurey_cp}, the squared-residual term has the same variance decomposition as the loss, whereas
\(\tr(\bA_{\bw})\) is linear in \(\bw\). Taking the conditional expectation of \eqref{eq:g_bound} and using
\eqref{eq:second_maurey_loss}--\eqref{eq:second_maurey_cp} gives
\begin{equation}\label{eq:central_maurey}
\mathcal L(\hat{\bw})
\leq
\mathcal L(\bw_1)
+\frac{\mathcal V(\bY)}m
+2\Gamma_m(\bY).
\end{equation}
Moreover, the variance identity gives
\begin{equation}\label{eq:V_energy}
\mathcal V(\bY)
=
\sum_{j=1}^M\hat w_j\|\hat{\bbf}_j\|^2
-\|\hat{\bbf}_{\hat{\bw}|\mathcal{M}}\|^2
\leq
\mathcal H(\bY).
\end{equation}
Combining \eqref{eq:pathwise_grid_oracle}, \eqref{eq:central_maurey}, and \eqref{eq:V_energy}, we obtain the pathwise master inequality
\begin{equation}\label{eq:pathwise_master}
\mathcal L(\hat{\bw})
-
\inf_{\bw\in\mathcal{W}_{\mathrm C}}\mathcal L(\bw)
\leq
\frac{2\mathcal H(\bY)}m
+2\Gamma_m(\bY).
\end{equation}

\medskip
\noindent\textsc{\underline{Step 4: High-probability control of the maximal candidate energy.}}
For every \(1\leq j\leq M\), the conditions
\(\bbf\in\mathcal{F}_n(F)\) and
\(\mathcal{M}\in\mathbb{M}_{M,A,B}\) give
\begin{align*}
\|\hat{\bbf}_j\|
&\leq
\|\bA_j\bY\|+\|\bbb_j\|\leq
A\|\bxi\|+(AF+B)\sqrt n.
\end{align*}
Consequently,
\begin{equation}\label{eq:H_deterministic_bound}
\mathcal H(\bY)
\leq
2A^2\|\bxi\|^2
+2n(AF+B)^2.
\end{equation}
Apply Lemma~\ref{lem:subgaussian} with \(\bS=\bI_n\) and confidence parameter
\(x+\log4\). Since
\(\sqrt{n(x+\log4)}\leq\{n+x+\log4\}/2\), after enlarging the constant we obtain
\begin{equation}\label{eq:noise_energy_tail}
\mathbb P\left[
\|\bxi\|^2
>
C_\kappa\sigma^2(n+x)
\right]
\leq
\frac12\exp(-x).
\end{equation}
It follows from \eqref{eq:H_deterministic_bound}--\eqref{eq:noise_energy_tail} that
\begin{equation}\label{eq:H_tail}
\mathbb P\left[
\mathcal H(\bY)
>
C_{\kappa,A}\left\{n\Lambda^2+\sigma^2x\right\}
\right]
\leq
\frac12\exp(-x).
\end{equation}
Importantly, no union bound over \(j\) is needed: the common bound
\(\|\bA_j\bxi\|\leq A\|\bxi\|\) holds simultaneously for all candidates.

Intersect the events in \eqref{eq:Gamma_tail} and \eqref{eq:H_tail}. By a union bound, their intersection has probability at least \(1-\exp(-x)\). On this event, \eqref{eq:pathwise_master} and Lemma~\ref{lem:maurey_grid} imply
\begin{align}
L_n(\hat{\bbf}_{\hat{\bw}|\mathcal{M}},\bbf)
-
\inf_{\bw\in\mathcal{W}_{\mathrm C}}
L_n(\hat{\bbf}_{\bw|\mathcal{M}},\bbf)
&\leq
C_{\kappa,A}\left[
\frac{\Lambda^2}{m}
+\frac{\sigma^2x}{mn}
+\frac{\sigma^2\{1+2\log N_m+x\}}{n}
\right]
\nonumber\\
&\quad\leq
C_{\kappa,A}\left[
\Lambda^2
\left\{
\frac1m
+\frac mn\log\left(\frac{2eM}{m}\right)
\right\}
+\frac{\sigma^2x}{n}
\right].
\label{eq:maurey_general_probability}
\end{align}
For the last inequality, we used
\(\log N_m\leq m\log(2eM/m)\),
\(\sigma^2x/(mn)\leq\sigma^2x/n\), and
\(1/n\leq(m/n)\log(2eM/m)\). We have therefore proved the sharp deviation inequality
\begin{equation}\label{eq:maurey_general_tail}
\mathbb P_{\bbf}\left[
L_n(\hat{\bbf}_{\hat{\bw}|\mathcal{M}},\bbf)
-
\inf_{\bw\in\mathcal{W}_{\mathrm C}}
L_n(\hat{\bbf}_{\bw|\mathcal{M}},\bbf)
>
C_{\kappa,A}\left[
\Lambda^2
\left\{
\frac1m
+\frac mn\log\left(\frac{2eM}{m}\right)
\right\}
+\frac{\sigma^2x}{n}
\right]
\right]
\leq
\exp(-x).
\end{equation}

\medskip
\noindent\textsc{\underline{Step 5: The bound in expectation.}}
The excess loss on the left-hand side of \eqref{eq:maurey_general_tail} is nonnegative. Integrating its tail and absorbing the resulting term \(1/n\) into
\((m/n)\log(2eM/m)\), we obtain
\begin{align}
\mathbb E L_n(\hat{\bbf}_{\hat{\bw}|\mathcal{M}},\bbf)
&\leq
\mathbb E\inf_{\bw\in\mathcal{W}_{\mathrm C}}
L_n(\hat{\bbf}_{\bw|\mathcal{M}},\bbf)
+
C_{\kappa,A}\Lambda^2
\left\{
\frac1m
+\frac mn\log\left(\frac{2eM}{m}\right)
\right\}
\nonumber\\
&\leq
\inf_{\bw\in\mathcal{W}_{\mathrm C}}
R_n(\hat{\bbf}_{\bw|\mathcal{M}},\bbf)
+
C_{\kappa,A}\Lambda^2
\left\{
\frac1m
+\frac mn\log\left(\frac{2eM}{m}\right)
\right\}.
\label{eq:maurey_general}
\end{align}
Here the second inequality follows from
$
\mathbb E\inf_{\bw\in\mathcal{W}_{\mathrm C}}
L_n(\hat{\bbf}_{\bw|\mathcal{M}},\bbf)
\leq
\inf_{\bw\in\mathcal{W}_{\mathrm C}}
R_n(\hat{\bbf}_{\bw|\mathcal{M}},\bbf).
$

\medskip
\noindent\textsc{\underline{Step 6: Choice of \(m\).}}
Suppose \(M>\sqrt n\) and set
$
\ell_M=\log(eM/\sqrt n).
$
Then \(1\leq\ell_M\leq n\). Choose
\[
m=\left\lceil\sqrt{ n/\ell_M}\right\rceil.
\]
This choice satisfies \(1\leq m\leq M\).
Since \(m\geq\sqrt{n/\ell_M}\) and
\(m\leq2\sqrt{n/\ell_M}\),
$
\frac1m\leq\sqrt{\frac{\ell_M}{n}},
$
and
\begin{align*}
\log\left(\frac{2eM}{m}\right)
&\leq
\log\left(\frac{2eM\sqrt{\ell_M}}{\sqrt n}\right)=
\ell_M+\log2+\frac12\log\ell_M
\leq C\ell_M.
\end{align*}
Consequently, we see that 
\[
\frac mn\log\left(\frac{2eM}{m}\right)
\leq
C\sqrt{\frac{\ell_M}{n}}.
\]
Substitution into \eqref{eq:maurey_general_tail} gives, for every \(x\geq0\),
\begin{equation}\label{eq:maurey_rate_probability}
\mathbb P_{\bbf}\left[
L_n(\hat{\bbf}_{\hat{\bw}|\mathcal{M}},\bbf)
-
\inf_{\bw\in\mathcal{W}_{\mathrm C}}
L_n(\hat{\bbf}_{\bw|\mathcal{M}},\bbf)
>
C_{\kappa,A}\left\{
\Lambda^2
\sqrt{\frac{\log(eM/\sqrt n)}n}
+\frac{\sigma^2x}{n}
\right\}
\right]
\leq
\exp(-x).
\end{equation}
Similarly, substitution into \eqref{eq:maurey_general} gives
\begin{equation}\label{eq:maurey_rate}
\mathbb E L_n(\hat{\bbf}_{\hat{\bw}|\mathcal{M}},\bbf)
\leq
\inf_{\bw\in\mathcal{W}_{\mathrm C}}
R_n(\hat{\bbf}_{\bw|\mathcal{M}},\bbf)
+
C_{\kappa,A}\Lambda^2
\sqrt{\frac{\log(eM/\sqrt n)}n}.
\end{equation}
This completes the proof.
\end{proof}

\section{Proof of Theorem~\ref{theo:diffuse-linear-lower}}

We first give the affine construction for $s\leq n$. Put
$d=D-1$ and fix orthonormal vectors
$\be_1,\ldots,\be_d\in\R^n$. Let
\begin{equation}\label{eq:stable-lower-scales}
  \mathbf Q=\frac12\bI_n,
  \qquad
  \beta=\frac{B\sqrt n}{2}.
\end{equation}
Define the first $D$ candidate pairs by
\begin{equation}\label{eq:stable-hard-collection}
  (\bA_j,\bbb_j)
  =
  (\mathbf Q,\beta\be_j),
  \quad j=1,\ldots,d,
  \qquad
  (\bA_D,\bbb_D)
  =
  (\mathbf Q,\boldsymbol{0}),
\end{equation}
and repeat $(\mathbf Q,\boldsymbol{0})$ if $M>D$. These pairs satisfy
\begin{equation}\label{eq:stable-hard-bounds}
  \max_{1\leq j\leq M}\|\bA_j\|_{\op}
  =\frac12\leq A,
  \qquad
  \max_{1\leq j\leq M}\|\bbb_j\|_n
  =\frac B2\leq B.
\end{equation}
They also span a $D$-dimensional pair space. Indeed, if a linear combination
of the first $D$ pairs is zero, its intercept component first forces the
first $d$ coefficients to be zero, after which its matrix component forces
the last coefficient to be zero. Furthermore,
\begin{equation}\label{eq:stable-hard-stable-rank}
  \mathcal A(\mathcal M_0)
  =\operatorname{span}\{\mathbf Q\},
  \qquad
  s_{\mathcal A}(\mathcal M_0)
  =\operatorname{sr}(\mathbf Q)
  =n.
\end{equation}
Thus
$\mathcal M_0\in\mathbb M_{M,D,A,B}^{s}$ whenever
$s\leq n$.

Let $\Omega_d\subseteq\{-1,1\}^d$ be a Varshamov--Gilbert packing such that
\begin{equation}\label{eq:stable-hypercube-packing}
  \log|\Omega_d|\geq c_0d,
  \qquad
  d_{\mathrm H}(\boldsymbol\omega,\boldsymbol\omega')
  \geq\frac d8
\end{equation}
for all distinct
$\boldsymbol\omega,\boldsymbol\omega'\in\Omega_d$. Choose a sufficiently
small universal constant $\alpha>0$ and set
\begin{equation}\label{eq:stable-hard-amplitude}
  \delta
  =
  \alpha
  \left(
    \sigma\wedge F\sqrt{\frac nd}
  \right),
  \qquad
  \bbf_{\boldsymbol\omega}
  =
  \delta\sum_{j=1}^d\omega_j\be_j.
\end{equation}
Then
\begin{equation}\label{eq:stable-hard-in-class}
  \|\bbf_{\boldsymbol\omega}\|_n^2
  =
  \frac{\delta^2d}{n}
  =
  \alpha^2
  \left\{
    F^2\wedge\frac{\sigma^2d}{n}
  \right\}
  \leq F^2,
\end{equation}
so every hard mean belongs to $\mathcal F_n(F)$.
For each $\boldsymbol\omega\in\Omega_d$, define
\begin{equation}\label{eq:stable-oracle-weights}
  w_j(\boldsymbol\omega)
  =
  \frac{\delta\omega_j}{\beta},
  \quad j=1,\ldots,d,
  \qquad
  w_D(\boldsymbol\omega)
  =
  -\sum_{j=1}^dw_j(\boldsymbol\omega),
\end{equation}
and set the remaining weights equal to zero. The weights sum to zero, and
hence the common random matrix component cancels:
\begin{equation}\label{eq:stable-exact-oracle}
  \bA_{\bw(\boldsymbol\omega)}
  =\boldsymbol{0},
  \qquad
  \bbb_{\bw(\boldsymbol\omega)}
  =\bbf_{\boldsymbol\omega},
  \qquad
  \hat{\bbf}_{\bw(\boldsymbol\omega)|\mathcal M_0}
  =\bbf_{\boldsymbol\omega}.
\end{equation}
Consequently, both the realized and risk oracles are zero on the hard family.

For distinct
$\boldsymbol\omega,\boldsymbol\omega'\in\Omega_d$, we see that 
\begin{align}
  \|\bbf_{\boldsymbol\omega}
    -\bbf_{\boldsymbol\omega'}\|^2
  &=
  4\delta^2
  d_{\mathrm H}(\boldsymbol\omega,\boldsymbol\omega')
  \geq
  \frac{\delta^2d}{2},
  \label{eq:stable-hard-separation}\\
  \operatorname{KL}\!\left(
    P_{\bbf_{\boldsymbol\omega}},
    P_{\bbf_{\boldsymbol\omega'}}
  \right)
  &=
  \frac{
    \|\bbf_{\boldsymbol\omega}
      -\bbf_{\boldsymbol\omega'}\|^2
  }{2\sigma^2}
  \leq
  2\alpha^2d.
  \label{eq:stable-hard-kl}
\end{align}
Choosing $\alpha$ sufficiently small and applying Fano's lemma with the
nearest-neighbor reduction gives a universal constant $p_0>0$ such that
\begin{align}
  \inf_{\tilde{\bbf}}
  \frac{1}{|\Omega_d|}
  \sum_{\boldsymbol\omega\in\Omega_d}
  \mathbb P_{\bbf_{\boldsymbol\omega}}\!\left[
    L_n(\tilde{\bbf},\bbf_{\boldsymbol\omega})
    \geq
    \frac{\delta^2d}{8n}
  \right]
  &\geq p_0.
  \label{eq:stable-fano-probability}
\end{align}
Indeed, whenever
$\|\tilde{\bbf}-\bbf_{\boldsymbol\omega}\|^2<\delta^2d/8$, the nearest
element of the packing is $\bbf_{\boldsymbol\omega}$, by
\eqref{eq:stable-hard-separation}. Fano's lemma controls the probability of
failure of this nearest-neighbor decoder because, by
\eqref{eq:stable-hard-kl}, the pairwise Kullback--Leibler divergences are a
sufficiently small multiple of $\log|\Omega_d|$. Since a supremum is at
least an average, \eqref{eq:stable-fano-probability} implies
\begin{align}
  \inf_{\tilde{\bbf}}
  \sup_{\boldsymbol\omega\in\Omega_d}
  \mathbb P_{\bbf_{\boldsymbol\omega}}\!\left[
    L_n(\tilde{\bbf},\bbf_{\boldsymbol\omega})
    \geq
    c
    \left\{
      F^2\wedge\frac{\sigma^2d}{n}
    \right\}
  \right]
  &\geq p_0.
  \label{eq:stable-fano-supremum}
\end{align}
Moreover, integrating the probability bound yields
\begin{align}
  \inf_{\tilde{\bbf}}
  \sup_{\boldsymbol\omega\in\Omega_d}
  R_n(\tilde{\bbf},\bbf_{\boldsymbol\omega})
  &\geq
  c
  \left\{
    F^2\wedge\frac{\sigma^2d}{n}
  \right\}.
  \label{eq:stable-fano-risk}
\end{align}
Since $d=D-1\geq D/2$, the right-hand sides of
\eqref{eq:stable-fano-supremum}--\eqref{eq:stable-fano-risk} are equivalent,
up to universal constants, to
$F^2\wedge\sigma^2D/n$. Combining these inequalities with
\eqref{eq:stable-exact-oracle} proves
\eqref{eq:diffuse-lower-prob}--\eqref{eq:diffuse-lower-risk} for
$s\leq n$.

For $s>n$, take $\bA_j=\boldsymbol{0}$ and
$\bbb_j=\beta\be_j$ for $j=1,\ldots,D$, repeating the last pair if
$M>D$. Then $D(\mathcal M_0)=D$ and
$s_{\mathcal A}(\mathcal M_0)=+\infty$. Let
$\Omega_D\subseteq\{-1,1\}^D$ be the corresponding hypercube packing and
put
\begin{equation*}
  \delta_D
  =
  \alpha
  \left(
    \sigma\wedge F\sqrt{\frac nD}
  \right),
  \qquad
  \bbf_{\boldsymbol\omega}
  =
  \delta_D\sum_{j=1}^D\omega_j\be_j.
\end{equation*}
For every $\boldsymbol\omega\in\Omega_D$, the weights
$w_j(\boldsymbol\omega)=\delta_D\omega_j/\beta$, $1\leq j\leq D$,
produce the exact aggregate
$\hat{\bbf}_{\bw(\boldsymbol\omega)|\mathcal M_0}
=\bbf_{\boldsymbol\omega}$. Repeating the preceding Fano argument with
$d$ replaced by $D$ proves both stated lower bounds. This deterministic
subcase also handles $D=1$.

\section{Proof of Theorem~\ref{theo:stable-upper}}

Choose a deterministic basis
$
  (\mathbf C_1,\bc_1),\ldots,(\mathbf C_D,\bc_D)
$
of the space $\mathcal V(\mathcal M)$. For $\bu\in\R^D$, define
\begin{equation}\label{eq:basis-linear-maps}
  \mathbf C_{\bu}
  =\sum_{k=1}^Du_k\mathbf C_k,
  \qquad
  \bc_{\bu}
  =\sum_{k=1}^Du_k\bc_k,
  \qquad
  \ba_{\bu}
  =\mathbf C_{\bu}\bbf+\bc_{\bu}.
\end{equation}
Since the basis $
  (\mathbf C_1,\bc_1),\ldots,(\mathbf C_D,\bc_D)
$ spans $\mathcal V(\mathcal M)$, we have 
\begin{equation}\label{eq:basis-original-equivalence}
  \left\{
    (\mathbf C_{\bu},\bc_{\bu}):\bu\in\R^D
  \right\}
  =
  \left\{
    (\bA_{\bw},\bbb_{\bw}):\bw\in\R^M
  \right\}.
\end{equation}
Thus the basis is only a nonredundant coordinate system; it does not change
the attainable aggregates, the Mallows criterion values, or either oracle in
Theorem~\ref{theo:stable-upper}.

We now introduce the random quantities needed in the proof:
\begin{equation}\label{eq:random-design-proof}
  \bX
  =
  \left[
    \mathbf C_1\bY+\bc_1,\ldots,
    \mathbf C_D\bY+\bc_D
  \right]
  \in\R^{n\times D},
  \quad
  \bG=\bX^\top\bX,
  \quad
  \bt
  =
  \left(
    \tr(\mathbf C_1),\ldots,\tr(\mathbf C_D)
  \right)^\top.
\end{equation}
In these coordinates, the Mallows criterion is
$
  \|\bX\bu-\bY\|^2+2\sigma^2\bt^\top\bu.
$
When $\bG$ is positive definite, its unique minimizer is
\begin{equation}\label{eq:cp-explicit-proof}
  \hat{\bu}^{\mathrm{L}}
  =
  \bG^{-1}
  \left(\bX^\top\bY-\sigma^2\bt\right).
\end{equation}

Define
\begin{equation}\label{eq:rho-linear}
  \rho_{\bbf}(\bu)^2
  \triangleq
  \|\ba_{\bu}\|^2
  +\sigma^2\|\mathbf C_{\bu}\|_{\F}^2.
\end{equation}
Because the pairs in $
  (\mathbf C_1,\bc_1),\ldots,(\mathbf C_D,\bc_D)
$ are linearly independent,
$\rho_{\bbf}$ is a norm. Indeed, $\rho_{\bbf}(\bu)=0$ implies first that
$\mathbf C_{\bu}=\boldsymbol{0}$ and then that
$\bc_{\bu}=\boldsymbol{0}$, so $\bu=\boldsymbol{0}$. Let
$\bH_{\bbf}$ be its positive-definite Gram matrix:
$
  \bu^\top\bH_{\bbf}\bu
  =
  \rho_{\bbf}(\bu)^2.
$
Since
$\bX\bu=\ba_{\bu}+\mathbf C_{\bu}\bxi$, we have 
$
  \bH_{\bbf}=\mathbb E_{\bbf}(\bG).
$
We have for every $\bbf\in\mathcal F_n(F)$,
\begin{equation}\label{eq:projection-properties}
  \left\|
    \hat{\bbf}_{\mathcal M}^{\mathrm{TCp}}-\bbf
  \right\|
  \leq
  \left\|
    \hat{\bbf}_{\mathcal M}^{\mathrm{Cp}}-\bbf
  \right\|,
  \qquad
  \left\|
    \hat{\bbf}_{\mathcal M}^{\mathrm{TCp}}-\bbf
  \right\|^2
  \leq4F^2n.
\end{equation}

\medskip
\noindent\textsc{\underline{Step 1: the exact self-normalized identity.}}
Define the score vector
\begin{equation}\label{eq:score}
  \bq
  \triangleq
  \bX^\top\bxi-\sigma^2\bt.
\end{equation}
On the event $\{\bG\succ\boldsymbol{0}\}$, the realized least-squares
oracle in the basis coordinates is
$
  \bu^*
  =
  \bG^{-1}\bX^\top\bbf.
$
Equations \eqref{eq:cp-explicit-proof} and \eqref{eq:score} give
$
  \hat{\bu}^{\mathrm{L}}-\bu^*
  =
  \bG^{-1}\bq.
$
Because $\bX\bu^*$ is the orthogonal projection of $\bbf$ onto
$\operatorname{col}(\bX)$, the Pythagorean identity yields
\begin{align}
  \|\bX\hat{\bu}^{\mathrm{L}}-\bbf\|^2
  -\inf_{\bu\in\R^D}\|\bX\bu-\bbf\|^2
  &=
  \|\bX(\hat{\bu}^{\mathrm{L}}-\bu^*)\|^2
  =
  \bq^\top\bG^{-1}\bq.
  \label{eq:exact-excess}
\end{align}

\medskip
\noindent\textsc{\underline{Step 2: fixed-direction concentration.}}
For fixed $\bu\in\R^D$, abbreviate
$\mathbf C=\mathbf C_{\bu}$, $\ba=\ba_{\bu}$, and
$\rho=\rho_{\bbf}(\bu)$. From \eqref{eq:score},
\begin{equation}\label{eq:score-expansion}
  \bu^\top\bq
  =
  \ba^\top\bxi
  +\bxi^\top\mathbf C\bxi
  -\sigma^2\tr(\mathbf C).
\end{equation}
For a nonsymmetric $\mathbf C$, the quadratic form in
\eqref{eq:score-expansion} depends only on its symmetric part. The linear
sub-Gaussian inequality and the Hanson--Wright inequality therefore imply
that, for every $z\geq0$,
\begin{align}
\mathbb P\!\left[
  |\bu^\top\bq|
  >
  C_\kappa
  \left\{
    \sigma\|\ba\|\sqrt z
    +\sigma^2\|\mathbf C\|_{\F}\sqrt z
    +\sigma^2\|\mathbf C\|_{\op}z
  \right\}
\right]
&\leq2\exp(-z).
\label{eq:score-hw}
\end{align}
By \eqref{eq:rho-linear}, we have
$
  \|\ba\|\leq\rho$
  and
  $\sigma\|\mathbf C\|_{\F}\leq\rho.
$
If $\mathbf C\neq\boldsymbol{0}$, the minimum-stable-rank condition gives
\begin{equation}\label{eq:stable-rank-control}
  \sigma\|\mathbf C\|_{\op}
  \leq
  \frac{\sigma\|\mathbf C\|_{\F}}{\sqrt s}
  \leq
  \frac{\rho}{\sqrt s}.
\end{equation}
If $\mathbf C=\boldsymbol{0}$, the corresponding operator-norm term
vanishes. Hence
\begin{align}
\mathbb P\!\left[
  |\bu^\top\bq|
  >
  C_\kappa\sigma\rho_{\bbf}(\bu)
  \left\{
    \sqrt z+\frac{z}{\sqrt s}
  \right\}
\right]
&\leq2\exp(-z).
\label{eq:fixed-score}
\end{align}

For the Gram form, we have 
$
  \|\bX\bu\|^2-\rho^2
  =
  2\ba^\top\mathbf C\bxi
  +\bxi^\top\mathbf C^\top\mathbf C\bxi
  -\sigma^2\|\mathbf C\|_{\F}^2.
$
Using the upper bounds
\begin{align}
  \|\mathbf C^\top\ba\|
  \leq
  \|\mathbf C\|_{\op}\|\ba\|,
  \qquad
  \|\mathbf C^\top\mathbf C\|_{\F}
   \leq
  \|\mathbf C\|_{\op}\|\mathbf C\|_{\F},
  \qquad
  \|\mathbf C^\top\mathbf C\|_{\op}
  =
  \|\mathbf C\|_{\op}^2,
\end{align}
and applying the same concentration inequalities gives
\begin{align}
\mathbb P\!\left[
  \left|
    \|\bX\bu\|^2-\rho_{\bbf}(\bu)^2
  \right|
  >
  C_\kappa\rho_{\bbf}(\bu)^2
  \left\{
    \sqrt{\frac zs}+\frac zs
  \right\}
\right]
&\leq4\exp(-z).
\label{eq:fixed-gram}
\end{align}
As before, the terms involving $s$ are zero for directions with
$\mathbf C_{\bu}=\boldsymbol{0}$.

\medskip
\noindent\textsc{\underline{Step 3: uniform score and Gram bounds.}}
A $1/2$-net of the Euclidean unit sphere in $\R^D$ has cardinality at most
$5^D$. Apply \eqref{eq:fixed-score} to
$\bu=\bH_{\bbf}^{-1/2}\bv$ at the net points, for which
$\rho_{\bbf}(\bu)=1$, and use the standard net-to-sphere argument. This
gives, for every $x\geq0$,
\begin{align}
\mathbb P\!\left[
  \|\bH_{\bbf}^{-1/2}\bq\|
  >
  C_\kappa\sigma
  \left\{
    \sqrt{D+x}+\frac{D+x}{\sqrt s}
  \right\}
\right]
&\leq2e^{-x}.
\label{eq:uniform-score}
\end{align}
Integrating this tail shows that, whenever $s\geq D$,
\begin{equation}\label{eq:score-second-moment}
  \mathbb E
  \|\bH_{\bbf}^{-1/2}\bq\|^2
  \leq
  C_\kappa\sigma^2D.
\end{equation}

Similarly, apply \eqref{eq:fixed-gram} on a $1/4$-net of cardinality at most
$9^D$ with $z=c_\kappa s$, where $c_\kappa>0$ is sufficiently small. The
standard net bound for symmetric matrices yields
\begin{align}
\mathbb P\!\left[
  \left\|
    \bH_{\bbf}^{-1/2}
    \bG
    \bH_{\bbf}^{-1/2}
    -\bI_D
  \right\|_{\op}
  >\frac12
\right]
&\leq
2\exp\{C_\kappa D-c_\kappa s\}.
\label{eq:uniform-gram}
\end{align}

\medskip
\noindent\textsc{\underline{Step 4: completion of the probability bound.}}
Let
\begin{equation}\label{eq:good-gram-event}
  \mathcal E_G
  =
  \left\{
    \left\|
      \bH_{\bbf}^{-1/2}
      \bG
      \bH_{\bbf}^{-1/2}
      -\bI_D
    \right\|_{\op}
    \leq\frac12
  \right\}.
\end{equation}
On $\mathcal E_G$, the matrix $\bG$ is positive definite and
\begin{equation}\label{eq:inverse-comparison}
  \bG^{-1}\preceq2\bH_{\bbf}^{-1}.
\end{equation}
Moreover, the aggregate obtained from
$\hat{\bu}^{\mathrm{L}}$ coincides with
$\hat{\bbf}_{\mathcal M}^{\mathrm{Cp}}$. Combining
\eqref{eq:projection-properties}, \eqref{eq:exact-excess}, and
\eqref{eq:inverse-comparison} gives
\begin{align}
  &\left\|
    \hat{\bbf}_{\mathcal M}^{\mathrm{TCp}}-\bbf
  \right\|^2
  -\inf_{\bw\in\R^M}
    \|\hat{\bbf}_{\bw|\mathcal M}-\bbf\|^2
  \leq
  2\|\bH_{\bbf}^{-1/2}\bq\|^2
  \quad\text{on }\mathcal E_G.
  \label{eq:excess-on-good-event}
\end{align}
Equations \eqref{eq:uniform-score} and \eqref{eq:uniform-gram}, together
with $(r+t)^2\leq2r^2+2t^2$, yield the untruncated remainder in
\eqref{eq:stable-upper-prob}. On the other hand,
\eqref{eq:projection-properties} implies deterministically that
\begin{equation}\label{eq:deterministic-excess-cap}
  L_n(\hat{\bbf}_{\mathcal M}^{\mathrm{TCp}},\bbf)
  -\inf_{\bw\in\R^M}
    L_n(\hat{\bbf}_{\bw|\mathcal M},\bbf)
  \leq4F^2.
\end{equation}
Combining these two bounds proves \eqref{eq:stable-upper-prob}. Under
the condition $s\geq C_\kappa(D+x)$, both the quadratic remainder and
the second exceptional probability are absorbed, giving
\eqref{eq:stable-upper-prob-simplified}.

\medskip
\noindent\textsc{\underline{Step 5: completion of the risk bound.}}
Using \eqref{eq:excess-on-good-event} on $\mathcal E_G$ and the second
inequality in \eqref{eq:projection-properties} on $\mathcal E_G^c$, we get
\begin{align}
  \mathbb E_{\bbf}
  \left\|
    \hat{\bbf}_{\mathcal M}^{\mathrm{TCp}}-\bbf
  \right\|^2
  &\leq
  \mathbb E_{\bbf}
  \inf_{\bw\in\R^M}
    \|\hat{\bbf}_{\bw|\mathcal M}-\bbf\|^2
  \nonumber\\
  &\quad+
  2\mathbb E
  \|\bH_{\bbf}^{-1/2}\bq\|^2
  +4F^2n\mathbb P(\mathcal E_G^c).
  \label{eq:expectation-decomposition}
\end{align}
Condition $
  s
  \geq
  C_\kappa
  \left\{
    D+
    \log\!\left(
      e+\frac{nF^2}{\sigma^2D}
    \right)
  \right\},
$ and
\eqref{eq:uniform-gram} imply
\begin{equation}\label{eq:bad-gram-small}
  F^2n\mathbb P(\mathcal E_G^c)
  \leq
  C_\kappa\sigma^2D.
\end{equation}
Indeed, with $r=nF^2/(\sigma^2D)$, the right-hand side of
\eqref{eq:uniform-gram} is at most $C_\kappa/(e+r)$, and
$r/(e+r)\leq1$. The same condition implies $s\geq D$, so
\eqref{eq:score-second-moment} applies. Finally,
\begin{equation}\label{eq:random-to-risk-oracle}
  \mathbb E_{\bbf}
  \inf_{\bw\in\R^M}
    \|\hat{\bbf}_{\bw|\mathcal M}-\bbf\|^2
  \leq
  \inf_{\bw\in\R^M}
  \mathbb E_{\bbf}
    \|\hat{\bbf}_{\bw|\mathcal M}-\bbf\|^2.
\end{equation}
Substituting \eqref{eq:score-second-moment},
\eqref{eq:bad-gram-small}, and \eqref{eq:random-to-risk-oracle} into
\eqref{eq:expectation-decomposition}, and then dividing by $n$, gives an
excess-risk bound of order $\sigma^2D/n$. The deterministic cap
\eqref{eq:deterministic-excess-cap} gives the simultaneous bound of order
$F^2$. Taking the smaller of the two proves \eqref{eq:stable-upper-risk}.

\newpage
\bibliographystyle{apalike}
\bibliography{paper-ref}

\end{sloppypar}
\end{document}